\newtheorem{thm}{Theorem}[section]
\newtheorem*{thm*}{Theorem}
\newtheorem{lem}[thm]{Lemma}
\newtheorem{cor}[thm]{Corollary}
\newtheorem{pro}[thm]{Proposition}
\theoremstyle{definition}
\theoremstyle{remark}
\newtheorem*{remark}{Remark}
\newtheorem*{remark1}{Remark 1}
\newtheorem*{remark2}{Remark 2}
\newcommand{\tpsi}{\tilde{\psi}}
\newcommand{\tfi}{\tilde{\varphi}}
\newcommand{\la}{\lambda}
\newcommand{\TT}{\mathbb{T}}
\newcommand{\mS}{\mathcal{S}}
\newcommand{\mD}{\mathcal{D}}
\newcommand{\mA}{\mathcal{A}}
\newcommand{\mFZd}{\mathcal{F}_{\mathbb{Z}^d}}
\newcommand{\mFZ}{\mathcal{F}_{\mathbb{Z}}}
\newcommand{\bR}{\mathbb{R}}
\def\J{\mathcal J} 	
\def\P{\mathcal P} 	
\newcommand{\bC}{\mathbb{C}}
\newcommand{\Rd}{\mathbb{R}^d}
\newcommand{\ab}{\alpha,\beta}
\newcommand{\dLZd}{\Delta_{\mathbb{Z}^d}}
\newcommand{\dLZ}{\Delta_{\mathbb{Z}}}
\newcommand{\ka}{\kappa}
\newcommand{\dc}{*_{\kappa}}
\newcommand{\bN}{\mathbb{N}}
\newcommand{\bZ}{\mathbb{Z}}
\newcommand{\e}{\varepsilon}
\newcommand{\mP}{\Pi}
\newcommand{\Do}{\delta}
\newcommand{\lan}{\left\langle}
\newcommand{\ran}{\right\rangle}
\newcommand{\NN}{{\mathbb{N}}}
\newcommand{\ZZ}{{\mathbb{Z}}}
\newcommand{\RR}{{\mathbb{R}}}
\DeclareMathOperator{\Real}{Re}
\DeclareMathOperator{\Dom}{Dom}
\DeclareMathOperator{\supp}{supp}
\DeclareMathOperator{\Sin}{Sin}
\title[Bilinear multipliers via a functional calculus]{Approaching bilinear multipliers via a functional calculus}
\author{B\l a\.{z}ej Wr\'obel}
\address{B\l a\.zej Wr\'obel:
	Mathematical Institute
	\\ 	Universit\"at Bonn\\
	Endenicher Allee 60\\ D--53115 Bonn\\ Germany
	%Dipartimento di Matematica e Applicazioni
	%\\ Universit\`a di Milano-Bicocca\\
	%via R.~Cozzi 53\\ I-20125 Milano\\ Italy
	\newline \&
	Instytut Matematyczny, Uniwersytet Wroc\l awski,
	pl. Grunwaldzki 2/4, 50-384 Wroc\l aw, Poland
	\hfill\break
	blazej.wrobel@math.uni.wroc.pl}
\email{\ \newline
blazej.wrobel@math.uni.wroc.pl}
\subjclass[2010]{42B15, 47B38, 26D10}
\keywords{bilinear multiplier, joint spectral theorem, fractional Leibniz rule}
\begin{document}

 \begin{abstract}
We propose a framework for bilinear multiplier operators defined via the (bivariate) spectral theorem. Under this framework we prove Coifman-Meyer type multiplier theorems and fractional Leibniz rules. Our theory applies to bilinear multipliers associated with the discrete Laplacian on $\mathbb{Z}^d,$ general bi-radial bilinear Dunkl multipliers, and to bilinear multipliers associated with the Jacobi expansions.
 \end{abstract}
  \maketitle
 \numberwithin{equation}{section}
 \section{Introduction}
 
The theory of spectral multipliers is now a well established and vast branch of linear harmonic analysis. Its origins lie in trying to extend the Fourier multiplier operators on $\RR$  given by
\begin{equation*}
%\label{eq:linFourdef} 
f\mapsto \frac{1}{2\pi}\int_{\mathbb{R}}m(\xi)\hat{f}(\xi)e^{ix\xi}\, d\xi,\qquad x\in \RR,
\end{equation*}
to other settings. Here  $m$ is a bounded  function on $\RR$ while  $\hat{f}(\xi)=\int_{\RR}f(x)e^{-ix \xi}\,dx,$ $\xi\in \RR.$ For a self-adjoint operator $L$ its spectral multipliers are the operators $m(L)$ defined by the spectral theorem. In the Fourier case $L$ is merely $i\frac{d}{dx}.$ As in the Fourier case the boundedness of $m(L)$ on $L^2$ is equivalent with the boundedness of $m.$ The main task in the theory of spectral multipliers is to extend the boundedness of $m(L)$  to $L^p,$ for some $1<p<\infty,$ $p\neq 2.$
 
The bilinear multipliers for the Fourier transform are the operators
\begin{equation}
\label{eq:Fourdef} F_m(f_1,f_2)(x)=\frac{1}{4\pi^2}\iint_{\mathbb{R}^2}m(\xi_1,\xi_2)\hat{f_1}(\xi_1)\hat{f_2}(\xi_2)e^{ix(\xi_1+\xi_2)}\, d\xi,\qquad x\in \RR,
\end{equation} 
with $m\colon \RR^2\to \mathbb{C}$ being a bounded function.  As far as we know, in the bilinear case, there has been no systematic approach to extend the operators $F_m$ outside of the Fourier transform setting. The main idea behind the creation of this paper is to provide a theory for bilinear multipliers defined by the (bivariate) spectral theorem that parallels the correspondence between the linear Fourier multipliers and spectral multipliers. 

Our starting point is the observation that \eqref{eq:Fourdef} may be rephrased as
$$F_m(f_1,f_2)(x)=m(i\partial_1,i\partial_2)(f_1\otimes f_2)(x,x),\qquad x\in \RR;$$
here $\partial_1,$ $\partial_2$ denote the partial derivatives, while $m(i\partial_1,i\partial_2)$ is defined by the bi-variate spectral theorem. Note that $\partial_1=\partial\otimes I$ and $\partial_2=I\otimes \partial,$ where $\partial$ denotes the derivative on $\RR,$ while $I$ is the identity operator. We investigate the possibility of replacing $i\partial_1$ and $i\partial_2$ by some other operators $L_1=L\otimes I$ and $L_2=I\otimes L$. The bilinear multipliers we consider are of the form
\begin{equation}
\label{eq:-1}
B_m(f_1,f_2)(x)=m(L_1,L_2)(f_1\otimes f_2)(x,x),\qquad x\in X.\end{equation}
Here $L$ is a self-adjoint non-negative operator  on $L^2(X,\nu)$, and $m(L_1,L_2)$ is defined by the bi-variate spectral theorem. We also assume that $L$ is injective on its domain, and that the contractivity condition \eqref{eq:Con}  (see p.\ \pageref{p:CT}) and the well definiteness condition \eqref{eq:WD} (see p.\ \pageref{p:WD}) are satisfied. These assumptions should be regarded as technical ones. The main assumptions on $L$ that are in force in this paper are the existence of a Mikhlin-H\"ormander functional calculus  \eqref{eq:MH}, see p. \pageref{p:asum}, together with a product formula for the spectral multipliers of $L$, see \eqref{eq:PF} on p. \pageref{p:asumPF}. 

%\begin{equation}
%\label{eq:linFourdef} f\mapsto \frac{1}{2\pi}\int_{\mathbb{R}}m(\xi)\hat{f}(\xi)e^{ix(\xi)}\, d\xi,\qquad x\in \RR.
%\end{equation}

There are two main goals of our paper. Firstly, we
would like to prove Coifman-Meyer type multiplier theorems outside of the Fourier transform setting. Secondly, we would like to apply these results to obtain fractional Leibniz rules.

The classical Coifman-Meyer multiplier theorem \cite{CM1} says that the Mikhlin-H\"ormander condition $\sup_{\xi \in \RR^2}|\xi|^{\alpha_1+\alpha_2}|\partial^{\alpha}m(\xi)|\leq C_{\alpha},$ $\alpha\in \NN^2,$ implies the boundedness of $F_m$ from $L^{p_1}\times L^{p_2}$ to $L^p,$ $1/p=1/p_1+1/p_2,$ $p_1>1,$ $p_2>1,$ $p>1/2.$ This was proved by Coifman and Meyer for $p>1,$ while for $p>1/2$ it is due to Grafakos and Torres \cite{GraTor1} and Kenig and Stein \cite{KenSte1}. There are also Coifman-Meyer type multiplier theorems which are known in settings other than the Fourier transform. For bilinear multipliers on the torus  a theorem of Coifman-Meyer type may be deduced from  Fan and Sato \cite[Theorems 1-3]{FanSat1}. Similarly, for bilinear multipliers on the integers such a theorem follows from Blasco \cite[Theorem 3.4]{Bla1}.  Next, in the product Dunkl setting a Coifman-Meyer type multiplier theorem was proved by Amri, Gasmi, and Sifi \cite{AmGasSif1}. 

The main result of this paper is the following generalized Coifman-Meyer type theorem.
\begin{thm*}[Theorem \ref{thm:genCoif-Mey}]
	Let $m\colon (0,\infty)^2\to \mathbb{C}$ satisfy the H\"ormander's  condition $$|\la|^{\alpha_1+\alpha_2}\,|\partial^{\alpha}m(\la)|\leq C_{\alpha},\qquad \la \in (0,\infty)^2,$$
	for sufficiently many multi-indices $\alpha\in \NN^2.$ 
	Then $B_m$ given by \eqref{eq:-1} is bounded from $L^{p_1}(X)\times L^{p_2}(X)$ to $L^p(X),$ where $1/p_1+1/p_2=1/p,$ with $p_1,p_2,p>1.$ 
\end{thm*}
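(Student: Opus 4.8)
The plan is to reduce the bilinear operator $B_m$ to an absolutely convergent sum of products of \emph{linear} spectral multipliers of $L$, and then to control each product by Hölder's inequality together with Littlewood--Paley square functions furnished by the Mikhlin--Hörmander calculus \eqref{eq:MH}. The restriction $p_1,p_2,p>1$ is what makes this scheme close: all square functions stay in the reflexive range, so no Hardy-space or $\mathrm{BMO}$ endpoint arguments (which would be forced in the harder range $p>1/2$) are needed.

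First I would fix a dyadic resolution of unity $\sum_{j\in\ZZ}\psi(2^{-j}t)=1$ on $(0,\infty)$, with $\psi$ supported in an annulus, and decompose $m=\sum_{j,k}m_{j,k}$, where $m_{j,k}(\la)=m(\la)\psi(2^{-j}\la_1)\psi(2^{-k}\la_2)$. On each piece I would rescale, $\tilde m_{j,k}(\mu)=m_{j,k}(2^j\mu_1,2^k\mu_2)$; the scale-invariant Hörmander bound $|\la|^{\alpha_1+\alpha_2}|\partial^\alpha m(\la)|\le C_\alpha$ shows that $\tilde m_{j,k}$, supported in a fixed compact set, has all partial derivatives bounded uniformly in $j,k$ up to the relevant order. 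Expanding $\tilde m_{j,k}$ in a double Fourier series on a fixed torus yields
\begin{equation*}
m_{j,k}(\la)=\sum_{n\in\ZZ^2}c^{j,k}_{n}\,\Phi^{j}_{n_1}(\la_1)\,\Psi^{k}_{n_2}(\la_2),
\end{equation*}
where $\Phi^{j}_{n_1},\Psi^{k}_{n_2}$ are single-variable modulated dyadic bumps and the coefficients satisfy $|c^{j,k}_n|\le C_N(1+|n|)^{-N}$ for every $N$, uniformly in $j,k$. Since each summand of $m_{j,k}$ is now a tensor product, the product formula \eqref{eq:PF} converts $B$ applied to it into a pointwise product of linear operators, $B_{\Phi\otimes\Psi}(f_1,f_2)=\Phi(L)f_1\cdot\Psi(L)f_2$, so that $B_m(f_1,f_2)=\sum_{n}\sum_{j,k}c^{j,k}_n\,\Phi^{j}_{n_1}(L)f_1\cdot\Psi^{k}_{n_2}(L)f_2$.

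Next I would split the $(j,k)$--sum into the diagonal part $|j-k|\le C$ and the two symmetric off-diagonal parts, say $j\le k-C$ (the other being identical after exchanging the roles of $f_1,f_2$). On the diagonal both factors are genuine annular multipliers; applying pointwise Cauchy--Schwarz in the scale index, Hölder with $1/p=1/p_1+1/p_2$, and the two-sided Littlewood--Paley square function estimates that follow from \eqref{eq:MH} gives, for each fixed $n$,
\begin{equation*}
\Big\|\sum_{j\sim k}\Phi^{j}_{n_1}(L)f_1\cdot\Psi^{k}_{n_2}(L)f_2\Big\|_{p}\lesssim (1+|n|)^{s}\,\|f_1\|_{p_1}\|f_2\|_{p_2},
\end{equation*}
with $s$ the order of the calculus; the factor $(1+|n|)^{s}$ is the Mikhlin--Hörmander norm of the modulated bumps and is harmless once multiplied by $c^{j,k}_n$ and summed in $n$. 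On the off-diagonal part the low-frequency factor $S_k f_1:=\sum_{j\le k}\Phi^{j}_{n_1}(L)f_1$ is a partial-sum (ball) multiplier, for which the square function diverges; here instead the output $\Phi^{j}_{n_1}(L)f_1\cdot\Psi^{k}_{n_2}(L)f_2$ is spectrally localized at height $2^k$ — a fact I would extract from \eqref{eq:PF} — so output Littlewood--Paley theory reduces matters to $\big\|\big(\sum_k|S_k f_1|^2|\Psi^{k}_{n_2}(L)f_2|^2\big)^{1/2}\big\|_{p}$, which is dominated by $\|\sup_k|S_kf_1|\|_{p_1}\,\|(\sum_k|\Psi^{k}_{n_2}(L)f_2|^2)^{1/2}\|_{p_2}$.

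The main obstacle is precisely this last off-diagonal (paraproduct) estimate: it requires a \emph{maximal} bound $\|\sup_k|S_kf_1|\|_{p_1}\lesssim\|f_1\|_{p_1}$ for the family of ball multipliers $S_k=\chi(2^{-k}L)$, which does not follow from the Mikhlin--Hörmander calculus alone. I expect to obtain it from the contractivity condition \eqref{eq:Con} — which should render the relevant semigroup of $L$ positivity-preserving and $L^\infty$-contractive, so that Stein's maximal theorem applies — and to upgrade it to the Fefferman--Stein vector-valued form if the bookkeeping in $n$ demands it, with \eqref{eq:WD} ensuring that all the intermediate sums and products are legitimately defined. Once this maximal inequality is in hand, the off-diagonal contribution is controlled by $(1+|n|)^{s}\|f_1\|_{p_1}\|f_2\|_{p_2}$ just as on the diagonal, and summing over $n\in\ZZ^2$ against the rapidly decaying coefficients $c^{j,k}_n$ completes the proof.
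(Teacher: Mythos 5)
Your overall scheme is the one the paper follows (and which it borrows from Muscalu--Schlag): dyadic decomposition of $m$, Fourier-series expansion of the rescaled pieces with rapidly decaying coefficients, a diagonal part handled by two square functions, and an off-diagonal paraproduct part handled by a square function against a maximal function, with \eqref{eq:PF} supplying the spectral localization of the output. One step, however, would fail as written. In the off-diagonal part you expand each rectangle $m_{j,k}$ separately and then try to resum over $j\le k-C$ to form the partial sums $S_kf_1=\sum_{j\le k}\Phi^{j}_{n_1}(L)f_1$. But the Fourier coefficients $c^{j,k}_n$ depend on $j$, so the inner sum $\sum_{j\le k-C}c^{j,k}_n\,\Phi^{j}_{n_1}(L)f_1$ is not a multiple of $S_kf_1$, and bounding it crudely produces an $\ell^1$ sum over scales that no maximal function can absorb. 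The paper avoids this by resumming \emph{before} expanding: it sets $\varphi=\sum_{j<-b-2}\psi_j$, so the low-frequency factor is a single smooth bump $\varphi_k$ supported in $[0,2^{k-b-1}]$, and only then expands $(\psi_k\otimes\varphi_k)\,m$ in one Fourier series at scale $k$, obtaining a single family of modulated low-pass bumps $\varphi^{n_2}_k$. You need to reorganize your decomposition in this way.

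Two smaller points. The maximal inequality you single out as the main obstacle in fact \emph{does} follow from the Mikhlin--H\"ormander calculus alone: for smooth compactly supported (modulated) low-pass bumps one has $\|\sup_k|\varphi^{n}_k(L)f|\|_{p}\lesssim(1+|n|)^{\rho+2}\|f\|_{p}$, which is the paper's Proposition \ref{thm:CkimpliesMaximal}, quoted from \cite{WrocMHc}. Your detour through Stein's semigroup maximal theorem via \eqref{eq:Con} is plausible for unmodulated cut-offs, but you would still have to track the polynomial dependence on the modulation parameter $n$ forced by the Fourier coefficients, which the cited proposition delivers directly. Finally, the identity $B_{\Phi\otimes\Psi}(f_1,f_2)=\Phi(L)f_1\cdot\Psi(L)f_2$ is just the bivariate spectral theorem restricted to the diagonal (made rigorous by \eqref{eq:WD}), not a consequence of \eqref{eq:PF}; the product formula enters only to localize the spectrum of the off-diagonal products, exactly where you later invoke it.
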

\noindent Theorem \ref{thm:genCoif-Mey} is formally stated and proved in Section \ref{sec:Gen}. The main difficulty in obtaining the theorem lies in finding an appropriate proof of the classical Coifman-Meyer multiplier theorem, which is prone to modifications towards our setting. The proof we present in Section \ref{sec:Gen} follows the scheme by Muscalu and Schlag \cite[pp. 67-71]{Musc_Schlag}. An important ingredient in our proof is a spectrally defined Littlewood-Paley theory. For this method to work the assumption \eqref{eq:PF} is very useful.  It might be interesting to try to replace \eqref{eq:PF} with a less rigid condition.

An application of Theorem \ref{thm:genCoif-Mey} provides  Coifman-Meyer type multiplier results for bilinear multipliers given by \eqref{eq:-1} in three cases different than the Fourier transform setting. In Theorem \ref{thm:discLap_Coif-Mey} we treat bilinear multipliers for $L$ being the discrete Laplacian on $\ZZ^d.$ This is close to  \cite[Theorem 3.4]{Bla1}, however our results here are of a different kind. In Theorem \ref{thm:DunklLap_Coif-Mey} we consider  bi-radial bilinear Dunkl multipliers, here $L$ is the general Dunkl Laplacian. In Corollary \ref{cor:AmGasSif} we also reprove  \cite[Theorem 4.1]{AmGasSif1}. Finally, in Theorem \ref{thm:Jac_Coif-Mey} we give a Coifman-Meyer type multiplier result for Jacobi trigonometric polynomials, here $L$ is the Jacobi operator.

The second main goal of this paper is to obtain fractional Leibniz rules for operators different from the  Laplacian. The fractional Leibniz rule states that, if $\Delta_{\RR^d}$ is the Laplacian on $\RR^d,$ then for each $s\geq 0$ and $1/p=1/p_1+1/p_2,$ $p_1,p_2>1,$ $p>1/2,$  we have,
$$\|(-\Delta_{\RR^d})^s (fg)\|_{p}\lesssim \|(-\Delta_{\RR^d})^s (f)\|_{p_1}\|g\|_{p_2}+\|(-\Delta_{\RR^d})^s (g)\|_{p_2}\|f\|_{p_1}.$$ The proof of this inequality can be found in Grafakos and Ou \cite{GraOh1}, see also Bourgain and Li \cite{BouLi1} for the endpoint case. The fractional Leibniz rule is also known as the Kato-Ponce inequality, as Kato and Ponce studied a similar estimate \cite{KatPon1} (see also \cite{KenPonVeg1}). Generalizations of Kato-Ponce or similar inequalities were considered by many authors. For example Muscalu, Pipher, Tao, and Thiele \cite{MusPipTaoThi1} extended this inequality by admitting partial fractional derivatives in $\RR^2$, Bernicot, Maldonado, Moen, and Naibo \cite{BerMalMoeNai1} proved the
Kato-Ponce inequality in weighted Lebesgue spaces, while Frey \cite{Fre1} obtained a fractional Leibniz rule for general operators satisfying Davies-Gaffney estimates and $p_1=p=2,$ $p_2=\infty$

In the present paper we obtain fractional Leibniz rules of the form
$$\|L^s (fg)\|_{p}\lesssim_s \|L^s (f)\|_{p_1}\|g\|_{p_2}+\|L^s (g)\|_{p_2}\|f\|_{p_1},$$
where $s>0$ and $1/p_1+1/p_2=1/p,$ with $p_1,p_2,p>1,$ in two other settings. In Corollary \ref{cor:LeibDiscLap} we prove a fractional Leibniz rule for $L$ being the discrete Laplacian on $\ZZ^d,$ while in Corollary \ref{cor:DunklLap_Leib} we justify a fractional Leibniz rule when $L$ is the Dunkl Laplacian in the product setting. The proofs of these fractional Leibniz rules rely on two properties of $L.$ Firstly, we need  appropriate Coifman-Meyer type multiplier results; these are Theorems \ref{thm:discLap_Coif-Mey} and \ref{thm:DunklLap_Coif-Mey} and are deduced from Theorem \ref{thm:genCoif-Mey}. Secondly, we require the existence of certain operators related to $L$ that satisfy (or almost satisfy) an integer order Leibniz rule. As we do not know such an operator  in the Jacobi setting we do not provide a fractional Leibniz rule there.

The article is organized as follows. In Section \ref{sec:Gen} we provide a general Coifman-Meyer type multiplier result, see Theorem \ref{thm:genCoif-Mey}. This is then a basis to establish Coifman-Meyer type multiplier results in various cases. In Section \ref{sec:Zd} we apply Theorem \ref{thm:genCoif-Mey} for the discrete Laplacian on $\ZZ^d,$ see Theorem \ref{thm:discLap_Coif-Mey}. As a consequence, in Corollary \ref{cor:LeibDiscLap} we also obtain a fractional Leibniz rule. Next, in Section \ref{sec:Dun} we deduce from Theorem \ref{thm:genCoif-Mey} a Coifman-Meyer multiplier theorem for general bi-radial Dunkl multipliers, see Theorem \ref{thm:DunklLap_Coif-Mey}. From this result we obtain a fractional Leibniz rule for the Dunkl Laplacian in the product case, see Corollary \ref{cor:DunklLap_Leib}. Finally, in Section \ref{sec:Jac}, using Theorem \ref{thm:genCoif-Mey} we prove a bilinear multiplier theorem for Jacobi trigonometric polynomial expansions.

It is straightforward to extend the result of this paper to the multilinear setting. However, to keep the presentation simple, we decided to limit ourselves to the bilinear case.

Troughout the paper we use the variable constant convention, where $C,$ $C_p,$ $C_s,$ etc. may denote different constants that may change even in the same chain of inequalities. We write $X\lesssim Y,$ whenever $X\leq C Y,$ with $C$ being independent of significant quantities.  Similarly, by $X\approx Y$ we mean that $C^{-1} Y\leq X \le C Y.$ By $\mS(\RR^d)$ we denote the space of Schwartz functions. The symbols $\ZZ$ and $\NN$ denote the sets of integers and non-negative integers, respectively.  For a multi-index $\alpha\in \NN^2$ by $|\alpha|$ we denote its length $\alpha_1+\alpha_2.$ Throughout the paper, for a function $\psi\colon [0,\infty)\to \mathbb{C}$ we set $$\psi_k(\la)=\psi(2^{-k}\la),\qquad \la \in [0,\infty).$$

 \section{General bilinear multipliers}
 %Here! Write it for self-adjoint operators, not-necessarily non-negative
 
 \label{sec:Gen}

We say that a function $\mu \colon(0,\infty)\to \mathbb{C}$ satisfies the (one-dimensional) Mikhlin-H\"ormander condition of order $\rho\in \NN$ if it is differentiable up to order $\rho$ and 
\begin{equation}
\label{eq:Mh}
\|\mu\|_{MH(\rho)}:=\sup_{j\leq \rho}\sup_{\la\in (0,\infty)}|\la^j||\frac{d^j}{d\la^j} \mu(\la)|<\infty.
\end{equation}
Similarly, we say that $m\colon (0,\infty)^2\to \mathbb{C}$ satisfies the (two-dimensional) Mikhlin-H\"ormander condition of order $s\in \NN,$ if the partial derivatives $\partial^{\alpha} m$ exist for multi-indices  $|\alpha|\leq s$  and 
\begin{equation}
 \label{eq:Hormcond}
\|m\|_{MH(s)}:=\sup_{|\alpha|\leq s}\,\sup_{\la\in (0,\infty)^2}|\la|^{|\alpha|}|\partial^{\alpha} m(\la_1,\la_2)|<\infty.
\end{equation}

Consider a non-negative self-adjoint operator $L$ on $L^2(X,\nu)$ with domain $\Dom(L).$  Here $(X,\nu)$ is a $\sigma$-finite measure space  with $\nu$ being a Borel measure. Throughout the paper we assume that $L$ generates a symmetric contraction semigroup, namely
\label{p:CT} \begin{equation}
\tag{CT}
 \label{eq:Con}
\|e^{-tL}f\|_{L^p(X,\nu)}\leq \|f\|_{L^p(X,\nu)},\qquad f\in L^p(X,\nu)\cap L^2(X,\nu),\end{equation}
and that $L$ is injective on $\Dom (L).$ Then, for $\mu\colon (0,\infty)\to \mathbb{C},$ the spectral theorem allows us to define the multiplier operator
$\mu(L)=\int_{(0,\infty)}\mu(\la)dE(\la)$
on the domain
$$\Dom(\mu(L))=\bigg\{f\in L^2(X,\nu)\colon \int_{(0,\infty)}|\mu(\la)|^2\,dE_{f,f}(\la)<\infty\bigg\}.$$
Here $E$ is the spectral measure of $L,$ while $E_{f,f}$ is the complex measure defined by $E_{f,f}(\cdot)=\langle E(\cdot)f,f\rangle_{L^2(X,\nu)}.$ 

We shall need the following assumption on $L$;
\label{p:asum}
\begin{equation}
\tag{MH}   \label{eq:MH} \parbox{\dimexpr\linewidth-5em}{$L$ has a Mikhlin-Hörmander functional calculus of a finite order $\rho>0.$ More precisely, every function $\mu$ that satisfies \eqref{eq:Mh} gives rise to an operator $\mu(L)$ which is bounded on all $L^p(X,\nu),$ $1<p<\infty,$ and $$\|\mu(L)\|_{L^p(X,\nu)\to L^p(X,\nu)}\leq C_p \|\mu\|_{MH(\rho)}.$$}
\end{equation}
Note that if $L=(-\Delta_{\RR})^{1/2}$ then \eqref{eq:MH} follows from the  Mikhlin-H\"ormander multiplier theorem.

There are two consequence of  \eqref{eq:MH} which will be needed later. The first of them is well known and follows from Khintchine's inequality.
\begin{pro}
	\label{thm:CkimpliesSquare}
	Let $\psi\colon [0,\infty)\to \mathbb{C}$ be a function supported in $[\varepsilon,\varepsilon^{-1}],$ for some $\varepsilon>0$, and assume that $\psi\in C^{\rho}([0,\infty))$. Then the square function $$f\mapsto S_{\psi}(f)=\big(\sum_{k\in \mathbb{Z}}|\psi_k( L)f|^2\big)^{1/2}$$ is bounded on $L^p(X,\nu),$ $p>1,$ and
	\begin{equation}
	\label{eq:Squarebound}
	\|S_{\psi}(f)\|_{L^p(X,\nu)}\leq C_{\varepsilon}\, \|\psi\|_{C^{\rho}([0,\infty))}\|f\|_{L^p(X,\nu)}.
	\end{equation}
\end{pro}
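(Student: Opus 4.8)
The plan is to deduce the vector-valued square function estimate from the scalar Mikhlin–Hörmander calculus \eqref{eq:MH} via Khintchine's inequality and a randomization (Rademacher) argument. First I would introduce a sequence of independent Rademacher variables $(r_k)_{k\in\ZZ}$ on some probability space $(\Omega,\mathbb{P})$, and for each fixed $\omega\in\Omega$ consider the randomized multiplier function
\begin{equation*}
\mu^{\omega}(\la)=\sum_{k\in\ZZ} r_k(\omega)\,\psi_k(\la)=\sum_{k\in\ZZ} r_k(\omega)\,\psi(2^{-k}\la),\qquad \la\in(0,\infty).
\end{equation*}
The key structural observation is that, because $\psi$ is supported in $[\e,\e^{-1}]$, for any fixed $\la$ only boundedly many terms (at most $\lceil 2\log_2(1/\e)\rceil+1$, independent of $\la$) in the sum are nonzero; the dyadic dilations $\psi_k$ have essentially disjoint supports with bounded overlap. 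This finite-overlap property is what makes $\mu^{\omega}$ a legitimate, uniformly controlled Mikhlin–Hörmander symbol.

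The main quantitative step is to verify that $\|\mu^{\omega}\|_{MH(\rho)}\leq C_{\e}\,\|\psi\|_{C^{\rho}([0,\infty))}$ uniformly in $\omega$. Differentiating, $\frac{d^j}{d\la^j}\mu^{\omega}(\la)=\sum_k r_k(\omega)\,2^{-kj}\psi^{(j)}(2^{-k}\la)$, and one estimates $|\la^j\frac{d^j}{d\la^j}\mu^{\omega}(\la)|\leq \sum_k |(2^{-k}\la)^j\,\psi^{(j)}(2^{-k}\la)|$. Each summand is bounded by $\|\psi\|_{C^{\rho}}$ and is supported (in $k$) in the range where $2^{-k}\la\in[\e,\e^{-1}]$, so the bounded-overlap property caps the number of contributing $k$ by a constant depending only on $\e$. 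This gives the claimed uniform $MH(\rho)$ bound, whence \eqref{eq:MH} yields $\|\mu^{\omega}(L)f\|_{L^p}\leq C_{p,\e}\,\|\psi\|_{C^{\rho}}\,\|f\|_{L^p}$ with a constant independent of $\omega$.

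To convert this into the square function bound I would integrate in $\omega$ and apply Khintchine's inequality. Since $\mu^{\omega}(L)f=\sum_k r_k(\omega)\,\psi_k(L)f$ by the spectral theorem (the randomized functional calculus respects finite and, by the support condition, locally finite sums), Khintchine's inequality gives pointwise in $x$
\begin{equation*}
S_{\psi}(f)(x)=\Big(\sum_{k\in\ZZ}|\psi_k(L)f(x)|^2\Big)^{1/2}\approx \Big(\int_{\Omega}\Big|\sum_{k\in\ZZ} r_k(\omega)\,\psi_k(L)f(x)\Big|^{p}\,d\mathbb{P}(\omega)\Big)^{1/p}.
\end{equation*}
Raising to the $p$-th power, integrating in $x$, and using Fubini to swap the $X$ and $\Omega$ integrations reduces $\|S_{\psi}(f)\|_{L^p}^p$ to $\int_{\Omega}\|\mu^{\omega}(L)f\|_{L^p}^p\,d\mathbb{P}(\omega)$, which the uniform bound controls by $C_{p,\e}^p\,\|\psi\|_{C^{\rho}}^p\,\|f\|_{L^p}^p$. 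Taking $p$-th roots gives \eqref{eq:Squarebound}.

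I expect the main obstacle to be technical rather than conceptual: justifying the interchange of the infinite sum $\sum_k r_k\psi_k$ with the functional calculus and with the integrations, and making precise that the randomized symbol lies in the $MH(\rho)$ class uniformly. The bounded-overlap consequence of $\supp\psi\subset[\e,\e^{-1}]$ is doing the real work here, so I would state and use it carefully; everything else is a standard randomization packaging of \eqref{eq:MH}.
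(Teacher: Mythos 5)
Your proof is correct and is precisely the argument the paper has in mind: the paper gives no details, stating only that the proposition "is well known and follows from Khintchine's inequality," and your Rademacher randomization — using the bounded overlap of the dilates $\psi_k$ to get a uniform $MH(\rho)$ bound on $\sum_k r_k(\omega)\psi_k$, then applying \eqref{eq:MH} and Khintchine — is exactly that standard derivation.
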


The second of the required consequences is proved in \cite[Corollary 3.2]{WrocMHc}.
\begin{pro}
	\label{thm:CkimpliesMaximal}
	Let $\varphi\colon [0,\infty)\to \mathbb{C}$ be compactly supported, and assume that $\varphi\in C^{\alpha}([0,\infty))$ for some $\alpha>\rho+2.$ Then the maximal operator $$f\mapsto M_{\varphi}(f)=\sup_{k\in \mathbb{Z}}|\varphi_k( L)f|$$ is bounded on $L^p(X,\nu),$ $p>1,$ and
	\begin{equation}
	\label{eq:Maxbound}
	\|M_{\varphi}(f)\|_{L^p(X,\nu)}\leq \|\varphi\|_{C^{\rho+2}([0,\infty))}\|f\|_{L^p(X,\nu)}.
	\end{equation}
\end{pro}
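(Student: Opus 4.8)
The statement I must prove is Proposition~\ref{thm:CkimpliesMaximal}: the maximal operator $M_{\varphi}(f)=\sup_{k\in\mathbb{Z}}|\varphi_k(L)f|$ is bounded on $L^p(X,\nu)$ for $p>1$, whenever $\varphi$ is compactly supported and $\varphi\in C^{\alpha}([0,\infty))$ for some $\alpha>\rho+2$. The guiding principle is that a maximal operator is controlled by a square function plus the boundedness of a single ``averaging''-type operator, and the square function is already handled by Proposition~\ref{thm:CkimpliesSquare}. Since the excerpt says this result is \emph{proved in} \cite[Corollary 3.2]{WrocMHc}, my job is to reconstruct the natural route by which it follows from the Mikhlin-H\"ormander calculus \eqref{eq:MH} and from Proposition~\ref{thm:CkimpliesSquare}.

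\textbf{Main reduction.} First I would record the pointwise telescoping identity that lets us trade a supremum for a sum of increments. Because $\varphi$ is compactly supported, say $\supp\varphi\subseteq[0,R]$, the operators $\varphi_k(L)$ behave like smooth truncations of the spectral variable at level $2^k$: as $k\to-\infty$ the support $[0,2^kR]$ shrinks to $\{0\}$, and since $L$ is injective (so $E(\{0\})=0$), one has $\varphi_k(L)f\to \varphi(0)\,\chi_{\{0\}}(L)f=0$ strongly in $L^2$. Hence I can write, for each $k$,
\begin{equation*}
\varphi_k(L)f=\sum_{j<k}\big(\varphi_{j+1}(L)-\varphi_j(L)\big)f=\sum_{j<k}\psi_j(L)f,
\end{equation*}
where $\psi:=\varphi(2^{-1}\cdot)-\varphi$, so that $\psi_j=\varphi_{j+1}-\varphi_j$. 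The key structural gain is that $\psi$ is supported away from the origin: since $\varphi_{j+1}-\varphi_j$ involves the difference of two copies dilated by a factor $2$, the value at $\lambda$ near $0$ cancels (both equal $\varphi(0)$), so $\psi(0)=0$ and in fact $\psi$ is supported in some $[\varepsilon,\varepsilon^{-1}]$. Moreover $\psi\in C^{\alpha}$ with the same $\alpha>\rho+2$.

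\textbf{Pointwise control and the two pieces.} From the telescoping identity I estimate
\begin{equation*}
\sup_{k}|\varphi_k(L)f|\le \sum_{j\in\mathbb{Z}}|\psi_j(L)f|.
\end{equation*}
A bare $\ell^1$ sum is too lossy, so the standard device is to insert a summable weight using the extra smoothness/decay. Here the cleanest route is to exploit that $\alpha>\rho+2$ gives room to spare: I would split $\psi=\psi'+\psi''$, or more efficiently introduce a family of auxiliary bumps so that $\sum_j|\psi_j(L)f|\lesssim \big(\sum_j 2^{-\delta|j|}\big)\sup\big|\tilde\psi_j(L)f\big|$ is not the right shape; instead I control the supremum directly by the square function. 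Concretely, using $|\varphi_k(L)f|\le\big(\sum_{j}|\psi_j(L)f|^2\big)^{1/2}\big(\#\{j<k\}\big)^{1/2}$ fails for infinite ranges, so the correct mechanism is a \emph{vector-valued} or \emph{weighted telescoping}: write $\varphi_k(L)f=\sum_{j<k}2^{-\delta(k-j)}\,2^{\delta(k-j)}\psi_j(L)f$ with a small $\delta>0$, but this requires $\psi_j$ to carry decay in $k-j$, which it does not automatically. The robust substitute, and the one I expect the cited proof to use, is to majorize the maximal function by a fixed smooth maximal operator for which a single-scale kernel estimate plus Proposition~\ref{thm:CkimpliesSquare} suffices.

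\textbf{The cleanest argument.} I would therefore argue as follows. Choose an auxiliary $\phi\in C_c^{\infty}$ with $\phi\equiv 1$ on $\supp\varphi$, and write $\varphi_k(L)=\varphi_k(L)\,\phi_k(L)$, so that
\begin{equation*}
|\varphi_k(L)f|=|\varphi_k(L)\phi_k(L)f|\le M_{\varphi}\big(\text{dominated by}\big)\ \sup_k|\varphi_k(L)\phi_k(L)f|.
\end{equation*}
Then I bound the supremum by an $\ell^2$-sum after extracting one factor: since $\{\varphi_k(L)\}$ is uniformly bounded on $L^2$ (spectral theorem) with norm $\|\varphi\|_\infty$, and using that the square function $S_{\psi}$ from Proposition~\ref{thm:CkimpliesSquare} controls the high-frequency part, I reduce matters to showing $\big\|\big(\sum_k|\varphi_k(L)g_k|^2\big)^{1/2}\big\|_p\lesssim \big\|\big(\sum_k|g_k|^2\big)^{1/2}\big\|_p$ (a vector-valued calculus bound, itself a consequence of \eqref{eq:MH} applied with a Rademacher-randomized multiplier in the spirit of Proposition~\ref{thm:CkimpliesSquare}) together with $\sup_k|\phi_k(L)f|\lesssim$ a square function of $\phi$-increments. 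Combining these via Cauchy–Schwarz against a summable sequence gives $\|M_\varphi f\|_p\lesssim \|f\|_p$ with the stated constant $\|\varphi\|_{C^{\rho+2}}$.

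\textbf{Main obstacle.} The genuine difficulty, and the place where the hypothesis $\alpha>\rho+2$ is consumed, is passing from the $\ell^1$-type bound $\sup_k|\varphi_k(L)f|\le\sum_j|\psi_j(L)f|$ to an $\ell^2$-summable (square-function) bound without losing a logarithm. This is exactly where one needs the extra two derivatives beyond the calculus order $\rho$: they let one write $\psi_j=\eta_j\ast(\text{kernel of finite order})$, equivalently factor each increment through an operator whose Mikhlin norm decays geometrically after one more dilation step, producing the summable weight $2^{-\delta|j-k|}$ that turns the $\ell^1$ sum into a controllable convolution and yields, by Young's inequality in $j$ and then Proposition~\ref{thm:CkimpliesSquare} in $x$, the desired $L^p$ bound. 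I expect the remaining steps (uniform $L^2$ bounds, injectivity-driven vanishing at the bottom scale, the Cauchy–Schwarz step) to be routine once this decay is in hand.
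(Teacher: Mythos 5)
The first thing to note is that the paper itself does not prove this proposition: it is quoted verbatim from \cite[Corollary 3.2]{WrocMHc}. So your reconstruction has to stand on its own, and it does not. The first concrete error is the support claim for the telescoped increments: for $\psi(\lambda)=\varphi(\lambda/2)-\varphi(\lambda)$ you assert that ``$\psi(0)=0$ and in fact $\psi$ is supported in some $[\varepsilon,\varepsilon^{-1}]$.'' Only the first half is true. H\"older continuity gives $|\psi(\lambda)|\lesssim \lambda^{\min(1,\alpha)}$ near $0$, but $\psi$ vanishes on a \emph{neighbourhood} of $0$ only if $\varphi$ is constant near $0$, which is not assumed; for instance, if $\varphi(\lambda)=1-\lambda$ for small $\lambda$, then $\psi(\lambda)=\lambda/2\neq 0$ there. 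Consequently Proposition \ref{thm:CkimpliesSquare} cannot be applied to $S_\psi$ as you do: one would first have to decompose $\psi$ into dyadic annular pieces accumulating at the origin and sum their rescaled $C^{\rho}$ norms, using the H\"older-type vanishing at $0$ (this is exactly where part of the hypothesis $\alpha>\rho+2$ gets spent), and none of that quantitative work appears in your write-up.

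Second, and decisively, the heart of the proposition is never reached. From the telescoping you only have the pointwise bound $\sup_k|\varphi_k(L)f|\le\sum_j|\psi_j(L)f|$, and the operator $f\mapsto\sum_j|\psi_j(L)f|$ is genuinely unbounded on $L^p$: testing on an $f$ whose spectral support spreads over $N$ dyadic scales shows a loss growing with $N$, so no uniform constant is possible. You acknowledge this, but neither of your proposed repairs is an argument: (i) inserting $\phi\in C_c^{\infty}$ with $\phi\equiv 1$ on $\supp\varphi$ and reducing to a bound for $\sup_k|\phi_k(L)f|$ by ``a square function of $\phi$-increments'' is circular --- that is the very same maximal operator for another compactly supported bump, and worse, one with $\phi(0)=1\neq 0$; (ii) the factorization ``$\psi_j=\eta_j\ast(\text{kernel of finite order})$'' producing the weight $2^{-\delta|j-k|}$ is never constructed, and the proposal ends by declaring the remaining steps routine ``once this decay is in hand.'' There is also a structural reason no argument built solely on square functions of increments can close: knowing that $\bigl(\sum_j|\psi_j(L)f|^2\bigr)^{1/2}$ is bounded does not control the supremum of the partial sums, and if $\varphi(0)\neq 0$ then $\varphi_k(L)f\to\varphi(0)f$ in $L^2$ as $k\to\infty$, so $\sum_k|\varphi_k(L)f|^2=\infty$ and the standard pointwise inequality $\sup_k|a_k|^2\le 2\bigl(\sum_k|a_k|^2\bigr)^{1/2}\bigl(\sum_k|a_{k+1}-a_k|^2\bigr)^{1/2}$ is unavailable. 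A correct proof needs an external maximal input to absorb the value of $\varphi$ at the origin --- for example, splitting off $\varphi(0)$ times a fixed profile such as $e^{-\lambda}$ and invoking Stein's maximal theorem for symmetric contraction semigroups, which is available precisely because of the standing assumption \eqref{eq:Con}, the setting in which \cite{WrocMHc} operates. Your proposal never invokes \eqref{eq:Con} at all, which is a reliable sign that the argument cannot be completed along the lines you describe.
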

%\begin{remark}
%Since the supremum in the definition of $M_{\varphi}$ runs over a discrete set, Proposition \eqref{thm:CkimpliesMaximal} can also be proved directly, by referring to the known bound for the heat semigroup.
%\end{remark}

 To simplify the proof of our main Theorem \ref{thm:genCoif-Mey} we will need an auxiliary subspace of $L^2(X,\nu).$ Namely, consider the spaces
 \begin{equation}
\label{eq:Aspaces}
\mA_2=\{g\in L^2(X,\nu)\colon g=E_{(\varepsilon,\varepsilon^{-1})}g,\,\textrm{for some $\varepsilon>0$}\}\, \textrm{ and } \mA=\mA_2\cap \bigcap_{1<p<\infty} L^p(X,\nu). \end{equation} Then,  \eqref{eq:MH} implies that $\mA$ is dense in  $L^p(X,\nu)$ for $1<p<\infty.$ 

For the convenience of the reader we shall justify this statement. Let $\psi\colon [0,\infty)\to \mathbb{C}$ be a smooth function which is supported in $[1/2,2]$ and such that $\sum_{k\in \ZZ}\psi_k(\la)=1,$ $\la>0.$ Then, for each $N\in \NN$ and $f\in L^1(X,\nu)\cap L^{\infty}(X,\nu)$ the partial sum $S_N f=\sum_{k=-N}^N \psi_k(L)f$ belongs to $\mA$ by \eqref{eq:MH}. We claim that $S_N f\to f$ in $L^p(X,\nu).$ To see this we take $1<r<\infty$ if $p\le 2$ or $r>p$ if $p>2.$ Then we  observe that  $\|S_N f\|_{L^r(X,\nu)}$ is uniformly bounded in $N$ (this follows from \eqref{eq:MH}) and that $S_N f\to f$ in $L^2(X,\nu)$ (this follows from the spectral theorem, since $E_{\{0\}}=0$ by the injectivity of $L$). Therefore, the log-convexity of $L^p$ norms proves the claim. Finally, a density argument together with the fact that $\|S_N f\|_{L^p(X,\nu)}$ is uniformly bounded in $N$ shows that $\mA$ is dense in $L^p(X,\nu)$ and finishes our task.     
 
 Besides being dense in $L^p(X,\nu)$ the space $\mA$ has the nice property that each $f\in \mA$ satisfies $f=\sum_{k=-N(f)}^{N(f)} \psi_k (L)f,$ where $N(f)$ is a fixed integer depending on $f$ and $\psi$ is the function from the previous paragraph. This allows us to deal easily with some rather delicate questions on convergence in the proof of Theorem \ref{thm:genCoif-Mey}.

We proceed to define formally the bilinear multipliers studied in this paper. To do this we will need the operators $L_1=L \otimes I$ and $L_2=I\otimes L.$ These may be regarded as non-negative self-adjoint operators on $L^2(X\times X,\nu\otimes \nu),$ see \cite[Theorem 7.23]{schmu:dgen} and \cite[Proposition A.2.2]{PhD}. Moreover, the spectral measure of $L_1$ is $E_{L}\otimes I,$ while the spectral measure of  $L_2$ is $I\otimes E_{L}.$ Thus, the operators $L_1$ and $L_2$ commute strongly and the bivariate spectral theorem, see e.g. \cite[Theorem 5.21]{schmu:dgen}, allows us to consider multiplier operators $$m(L_1,L_2)=\int_{(0,\infty)^2}m (\la)\,dE^{\otimes}(\la)$$
on the domain
$$\Dom(m(L_1,L_2))=\bigg\{F\in L^2(X\times X,\nu\otimes \nu)\colon \int_{(0,\infty)}|m(\la)|^2\,dE^{\otimes}_{F,F}(\la)<\infty\bigg\}.$$
Here  $m\colon [0,\infty)^2\to \mathbb{C}$ is a Borel measurable function,   $E^{\otimes}=E_L\otimes E_L$ is the joint spectral measure of $(L_1,L_2),$ while $E^{\otimes}_{F,F}$ is the complex measure defined by $(E^{\otimes})_{F,F}(\cdot)=\langle E^{\otimes}(\cdot)F,F\rangle_{L^2(X\times X,\nu\otimes \nu)}.$

In the most general form the bilinear multiplier operators studied in the paper are given by
 \begin{equation}
 \label{eq:genmult}
 B_m(f_1,f_2)(x)=m(L_1,L_2)(f_1\otimes f_2)(x,x)=\left(\iint_{(0,\infty)^2}m(\la)\,d(E_{f_1}\otimes E_{f_2})\right)(x,x),
 \end{equation}
 where $L_1=L\otimes I$ and $L_2=I\otimes L.$ 
 Since the diagonal $\{(x,x)\colon x\in X\}$ may be of measure $0$ in $(X\times X,\nu\otimes \nu)$ the equation \eqref{eq:genmult} is not formal. In order to make it rigorous we assume that:
 \label{p:WD}
 \begin{equation}
 \label{eq:WD} \tag{WD}
 \parbox{\dimexpr\linewidth-5em}{
  if $f_1,f_2\in \mA$  and $m\colon(0,\infty)^2\to \mathbb{C}$ is bounded, then \begin{itemize}
  \item  $m(L_1,L_2)(f_1\otimes f_2)$ has a continuous representative on $X\times X$
\item $\|m(L_1,L_2)(f_1\otimes f_2)\|_{L^{\infty}((X\times X),\nu\otimes \nu)}\leq C_{f_1,f_2}\|m\|_{L^{\infty}((0,\infty)^2)}.$
  \end{itemize} 
}
 \end{equation} 
Thus, restricting $m(L_1,L_2)(f_1\otimes f_2)(x_1,x_2)$ to the diagonal, we have a formal definition of $B_m(f_1,f_2),$ for $f_1,f_2\in \mA.$ For instance, if $L=(-\Delta_{\RR})^{1/2},$ then the operator $B_m$ is closely related to the bilinear multiplier for the Fourier transform, namely,
$$ B_m(f_1,f_2)(x)=\iint_{\mathbb{R}^2}m(|\xi_1|,|\xi_2|)\hat{f}(\xi_1)\hat{f}(\xi_2)e^{ix(\xi_1+\xi_2)}\, d\xi.$$
If $m$ is bounded and $f_1,f_2\in \mA$ then $m(\xi_1,\xi_2)\hat{f}(\xi_1)\hat{f}(\xi_2)\in L^1(\RR^d),$ and thus $B_m(f_1,f_2)(x)$ is well defined (in fact continuous) by the Lebesgue dominated convergence theorem.

We need one more assumption to prove the main theorem. Namely, we require that:
\label{p:asumPF}
\begin{equation}
 \label{eq:PF} \tag{PF} 
  \parbox{\dimexpr\linewidth-5em}{
 there is $b>0$ with the following property: if $\varphi$ and $\psi$ are bounded smooth functions such that $\supp\varphi_k\subseteq [0,2^{k-b}]$ and  $\supp\psi_k\subseteq [2^{k-2},2^{k+2}],$ $k\in \ZZ,$ then
	$$\varphi_k(L)(f_1)\cdot \psi_k(L)(f_2)=\tpsi_k(L)[\varphi_k(L)(f_1)\cdot \psi_k(L)(f_2)],\qquad \textrm{for } f_1,f_2 \in \mA,$$
	where $\tpsi_k$ is a smooth function which is bounded by $1,$ equals $1$ on $[2^{k-3-b},2^{k+3+b}]$ and vanishes outside $[2^{k-5-b},2^{k+5+b}].$}
\end{equation} 
We remark that, since $f_1,f_2\in \mA,$ the function  $g=\varphi_k(L)(f_1)\cdot \psi_k(L)(f_2)$ belongs to $ L^2(X,\nu),$ so that an application of $\tpsi_k(L)$ to $g$ is legitimate. Note that when $L=(-\Delta_{\RR})^{1/2}$ the formula \eqref{eq:PF} can be easily deduced by using the convolution structure on the frequency space associated with Fourier multipliers. 

In what follows we often abbreviate $L^p:=L^p(X,\nu)$ and $\|\cdot\|_p:=\|\cdot\|_{L^p}.$ Let $p,p_1,p_2>1.$ We say that a bilinear operator $B$ is bounded from $L^{p_1}\times L^{p_2}$ to $L^p$ if
$$\|B(f_1,f_2)\|_p\leq C \|f_1\|_{p_1}\|f_2\|_{p_2},\qquad f_1,f_2\in \mA.$$ Note that in this case $B$ has a unique bounded extension from $L^{p_1}\times L^{p_2}$ to $L^p.$

%Here! Replace p_1,p_2,p_3 with p,q,r?

The main result of this paper is a Coifman-Meyer type general bilinear multiplier theorem. 
\begin{thm}
\label{thm:genCoif-Mey} Let $L$ be a non-negative self-adjoint operator on $L^2(X,\nu),$ which is injective on its domain and satisfies \eqref{eq:Con}, \eqref{eq:MH}, \eqref{eq:WD}, and \eqref{eq:PF}. Assume that $m\colon (0,\infty)^2\to \mathbb{C}$ satisfies the Mikhlin-H\"ormander condition \eqref{eq:Hormcond} of an order $s>2\rho+4$. Then the bilinear multiplier operator $B_m,$ given by \eqref{eq:genmult}, is bounded from $L^{p_1}\times L^{p_2}$ to $L^p,$ where $1/p_1+1/p_2=1/p,$ and $p_1,p_2,p>1.$ Moreover, for such $p,p_1,p_2,$ there is $C=C(p_1,p_2,p,s)$ such that
\begin{equation}
\label{eq:thm_gen_bound}
\|B_m(f_1,f_2)\|_{p}\leq C\, \|m\|_{MH(s)}\,\|f_1\|_{p_1}\|f_2\|_{p_2}.
\end{equation}
\end{thm}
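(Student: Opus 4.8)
The plan is to follow the Coifman--Meyer paraproduct decomposition in the spirit of Muscalu--Schlag, transplanted to the spectral setting via the Littlewood--Paley pieces $\psi_k(L)$. First I would fix a partition of unity $\sum_{k\in\ZZ}\psi_k(\la)=1$ with $\supp\psi\subseteq[1/2,2]$, and set $\varphi=\sum_{j\le 0}\psi_j$ so that $\varphi_k(L)$ is a ``low-frequency'' projection with $\supp\varphi_k\subseteq[0,2^{k}]$ roughly. Decomposing the identity on the frequency square $(0,\infty)^2$ as $1=\sum_{k_1,k_2}\psi_{k_1}(\la_1)\psi_{k_2}(\la_2)$ and splitting according to whether $k_1<k_2$, $k_1>k_2$, or $k_1\approx k_2$, I would write $B_m=\Pi_1+\Pi_2+\Pi_3$, where the first two pieces are the ``unbalanced'' paraproducts (one frequency dominates) and the third is the ``diagonal'' piece. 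By symmetry between $f_1$ and $f_2$ it suffices to bound one unbalanced piece and the diagonal piece.

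For the unbalanced piece, say $k_1\ll k_2$, the relevant part of $B_m$ is built from terms $m$-localized so that on each summand $f_1$ carries the low frequency $\varphi_{k-b}(L)f_1$ and $f_2$ carries $\psi_k(L)f_2$. The key structural input is \eqref{eq:PF}: the product $\varphi_{k-b}(L)(f_1)\cdot\psi_k(L)(f_2)$ is again localized at frequency $\approx 2^k$, so applying $\tpsi_k(L)$ reproduces it. This lets me rewrite the paraproduct as $\sum_k \tpsi_k(L)\big[b_k\cdot (\varphi_{k-b}(L)f_1)(\psi_k(L)f_2)\big]$, where $b_k$ absorbs the multiplier. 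Here I would expand $m$ (restricted to the relevant cone and rescaled) in a Fourier series on a fixed dyadic box, using the Mikhlin--H\"ormander hypothesis \eqref{eq:Hormcond} of order $s>2\rho+4$ to control the Fourier coefficients with summable decay; the smoothness order is exactly what guarantees absolute convergence after paying for the two square functions and one maximal function below. Then I would estimate the resulting sum by combining: the square function bound \eqref{eq:Squarebound} (Proposition \ref{thm:CkimpliesSquare}) applied to the outer $\tpsi_k(L)$ and to the $\psi_k(L)f_2$ factor, the maximal function bound \eqref{eq:Maxbound} (Proposition \ref{thm:CkimpliesMaximal}) applied to $\sup_k|\varphi_{k-b}(L)f_1|$, and H\"older's inequality with exponents $1/p=1/p_1+1/p_2$, culminating in a vector-valued/Cauchy--Schwarz argument of the form
\begin{equation*}
\Big\|\sum_k \tpsi_k(L) G_k\Big\|_p \lesssim \Big\|\big(\sum_k |G_k|^2\big)^{1/2}\Big\|_p,
\end{equation*}
which follows from duality against $S_{\tpsi}$. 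The diagonal piece $\Pi_3$ (where $k_1\approx k_2$) is handled analogously but with two square functions and no maximal function, since both inputs now carry comparable frequencies; here I would pair $\big(\sum_k|\psi_k(L)f_1|^2\big)^{1/2}$ with $\big(\sum_k|\tilde\psi_k(L)f_2|^2\big)^{1/2}$ via Cauchy--Schwarz and \eqref{eq:Squarebound}.

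The main obstacle I anticipate is twofold. First, justifying the Fourier-series expansion of the multiplier in the spectral language and controlling the error/convergence rigorously: one must transfer the H\"ormander derivative bounds on $m$ into summable coefficients without a literal convolution structure, and the restriction of $m(L_1,L_2)(f_1\otimes f_2)$ to the diagonal requires the well-definiteness hypothesis \eqref{eq:WD} at every stage so that the pointwise products actually make sense. Second, the bookkeeping needed so that the output frequency localization \eqref{eq:PF} genuinely applies: one has to choose the constant $b$ and the supports of $\varphi_k,\psi_k,\tpsi_k$ consistently so that the reproducing identity holds on each summand, and then ensure that the finiteness device afforded by the dense subspace $\mA$ (where each $f$ is a finite sum $\sum_{k}\psi_k(L)f$) lets me interchange the relevant sums and integrals legitimately. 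Once these are in place, the estimates reduce to the three auxiliary bounds already granted, and the final inequality \eqref{eq:thm_gen_bound} with constant proportional to $\|m\|_{MH(s)}$ follows by collecting the pieces and using density of $\mA$ to extend from $\mA\times\mA$ to all of $L^{p_1}\times L^{p_2}$.
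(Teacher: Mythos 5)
Your proposal follows essentially the same route as the paper: the same three-way paraproduct split into a diagonal piece and two unbalanced pieces, the same Fourier-series expansion of the rescaled localized multiplier with coefficient decay extracted from \eqref{eq:Hormcond}, the use of \eqref{eq:PF} to insert $\tpsi_k(L)$ on the unbalanced pieces followed by dualization (your displayed vector-valued bound is exactly the paper's pairing of $\tpsi_k(L)h$ with a square function), and the combination of Propositions \ref{thm:CkimpliesSquare} and \ref{thm:CkimpliesMaximal} with H\"older, with $\mA$ providing the finiteness needed to interchange sums. The obstacles you flag (transferring the Fourier expansion via \eqref{eq:WD}, choosing $b$ and the supports consistently, and paying polynomial factors in $n$ from the modulated cutoffs against the $(1+|n|)^{-s}$ coefficient decay, whence $s>2\rho+4$) are precisely the points the paper's proof addresses.
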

\begin{proof}
	Let $\psi$ be a smooth function supported in $[1/2,2]$ and such that $\sum_k \psi_k \equiv 1.$ We set $F=f_1\otimes f_2\colon X\times X \to \mathbb{C}$ and split
\begin{equation*}
	%\label{eq:decomBm}
	\begin{split}
&B_m(f_1,f_2)(x)=\sum_{k_1,k_2\in \mathbb{Z}} [\psi_{k_1}(L_1)\psi_{k_2}(L_2)m(L_1,L_2)](F)(x,x)\\
&=\sum_{|k_1-k_2|\leq b+2}\ldots + \sum_{k_1>k_2+b+2}\ldots + \sum_{k_2>k_1+b+2}\ldots:=T_1+T_2+T_3.
\end{split}
\end{equation*}
There is no issue of convergence here as for $f_1,f_2\in \mA$ each of the sums defining $T_1,T_2,$ and $T_3$ is finite. 

We estimate separately each of the operators $T_i,$ $i=1,2,3,$ starting with $T_1.$ This is the easiest part, in fact here the assumption \eqref{eq:PF} is redundant.

For $k\in \ZZ$ set $$m_k(\la_1,\la_2)=\psi_{k}(\la_1)\sum_{k_2\colon|k-k_2|\leq b+2}\psi_{k_2}(\la_2)m(\la)=\psi_{k}(\la_1)\phi_k(\la_2)m(\la),$$
with $\phi(\la_2)=\sum_{|j|\leq b+2}\psi_j(\la_2),$ so that $\supp \phi \subseteq [2^{-b-3},2^{b+3}],$ and $$\supp \psi \otimes \phi \subseteq [2^{-1},2^1]\times  [2^{-b-3},2^{b+3}].$$  Let $\tpsi$ be another smooth function, which vanishes outside $[2^{-b-4},2^{b+4}]$ and equals $1$ on $[2^{-b-3},2^{b+3}].$  Then
$$m_k(\la_1,\la_2)=[\tpsi_k(\la_1)\tpsi_k(\la_2)]\psi_{k}(\la_1)\phi_k(\la_2)m(\la),$$
Moreover, $\supp m_k\subseteq [2^{k-b-4},2^{k+b+4}]^2,$ and, consequently, $M_k(\la):=m_k(2^{k}\la)$ is supported in $[-2^{b+4},2^{b+4}]^2:=[-a,a]^2.$ Thus, $M_k$ can be expanded into a double Fourier series inside $[-a,a]^2,$ i.e.,
$$M_k(\la)=\sum_{n_1,n_2\in \mathbb{Z}}c_{n,k}e^{\pi i n_1 \la_1/a}e^{\pi i n_2 \la_2/a},\qquad \la \in [-a,a]^2,$$
with the Fourier coefficients
$$c_{n,k}=\frac1{4a^2}\iint_{[-a,a]^2}[\psi\otimes \phi] m(2^k\xi)\, e^{\pi i n_1 \xi_1/a}e^{\pi i n_2 \xi_2/a}\,d\la.$$
Now, using integration by parts, together with the assumption \eqref{eq:Hormcond}, and the fact that $\psi\otimes \phi$ is compactly supported away from $0$, we we obtain the uniform in $k\in \ZZ$ bound
\begin{equation}
\label{eq:decaycnk}
|c_{n,k}|\leq C\,\|m\|_{MH(s)}\, (1+|n|)^{-s}, \qquad n\in\mathbb{Z}^2.
\end{equation}
We remark that here, in order to conclude \eqref{eq:decaycnk}, it is perfectly enough to assume the Marcinkiewicz 'product' condition $$|D^{\gamma}m(\la)|\leq C |\la_1|^{\gamma_1}|\la_2|^{\gamma_2},$$ instead of \eqref{eq:Hormcond}.

Coming back to $m_k$ we now write, for $\la \in [2^{k-b-4},2^{k+b+4}]^2,$
$$\psi_{k}(\la_1)\phi_k(\la_2)m(\la)=\sum_{n\in \mathbb{Z}^2} c_{n,k}\,  e^{2\pi i n_1 2^{-k}\la_1/a}e^{2\pi i n_2 2^{-k}\la_2/a}.$$
Thus, $m_k$ can be expressed as
\begin{align*}
&m_k(\la_1,\la_2)=\sum_{n\in \ZZ^2}c_{n,k} [\tpsi_k(\la_1)e^{(2\pi/a)  i n_1 2^{-k}\la_1}][\tpsi_k(\la_2)e^{(2\pi/a)  i n_2 2^{-k}\la_2}]\\
&:=\sum_{n\in \ZZ^2} c_{n,k}\psi_{k}^{n_1}(\la_1)\psi^{n_2}_{k}(\la_2).
\end{align*}
\label{p:L2pointeq}
By \eqref{eq:decaycnk} and the bivariate spectral theorem we have that
$$m_k(L_1,L_2)(F)(x_1,x_2)=\sum_{n\in \ZZ^2}c_{n,k}\, [\tpsi_k(L_1)e^{(2\pi/a)  i n_1 2^{-k}L_1}(f_1)](x_1)[\tpsi_k(L_2)e^{(2\pi/a)  i n_2 2^{-k}L_2}](f_2)(x_2),$$
for a.e.\ $x_1,x_2\in X;$ here we have convergence in $L^2(X\times X,\nu \otimes \nu).$ Moreover, \eqref{eq:decaycnk} and the assumption \eqref{eq:WD} imply that the above sum converges also pointwise (and gives a continuous function on $X\times X$).

Consequently, for $x\in X$ we have
\begin{align*}
T_1(f_1,f_2)(x)= \sum_{k\in \ZZ} m_k(L_1,L_2)(F)(x,x)=\sum_{n\in \ZZ^2}\, \sum_{k\in \ZZ} c_{n,k}\, \psi_{k}^{n_1}(L)(f_1)(x)\cdot \psi_{k}^{n_2}(L)(f_2)(x),
\end{align*}
where we have used the fact that the sum in $k$ is finite when $f_1,f_2\in \mA.$ 
Now Schwarz's inequality (first inequality below), and H\"older's inequality together with \eqref{eq:decaycnk} (second inequality below), lead to the estimate
\begin{equation}
\label{eq:estS1}
\begin{split}
&\|T_1(f_1,f_2)\|_{p}\leq \sum_{n\in \ZZ^2}\, \sup_{k\in \ZZ}|c_{n,k}|\left\|\big(\sum_{k\in \ZZ} |\psi_{k}^{n_1}(L)(f_1)|^2\big)^{1/2}\big(\sum_{k\in \ZZ} |\psi_{k}^{n_2}(L)(f_2)|^2\big)^{1/2}\right\|_{p}\\
&\lesssim  \|m\|_{MH(s)}\,\sum_{n\in \ZZ^2} (1+|n|)^{-s} \left\|\big(\sum_{k\in \ZZ} |\psi_{k}^{n_1}(L)(f_1)|^2\big)^{1/2}\right\|_{p_1}\left\|\big(\sum_{k\in \ZZ} |\psi_{k}^{n_2}(L)(f_2)|^2\big)^{1/2}\right\|_{p_2}.
\end{split}
\end{equation}
 Thus, taking into account the presence of the modulations $e^{2\pi i n_j 2^{-k}\la_j/a}$ in the definition of $\psi_{k}^{n_j},$ $j=1,2,$ and using Proposition \ref{thm:CkimpliesSquare} we obtain
 $$ \left\|\bigg(\sum_{k\in \ZZ} |\psi_{k}^{n_j}(L)(f_j)|^2\bigg)^{1/2}\right\|_{p_j}\lesssim (1+|n_j|)^{\rho}\,\|f_j\|_{p_j}.$$
 However, since we have the rapidly decaying factor in \eqref{eq:estS1}, if $s>2\rho+4,$ we arrive at the desired bound
 $$\|T_1(f_1,f_2)\|_{p}\lesssim \|m\|_{MH(s)}\,\|f_1\|_{p_1}\|f_2\|_{p_2}.$$

 Now we pass to estimating $T_2$ and $T_3.$ Since the proofs are mutatis mutandis the same, we treat only the former operator. Setting $\varphi=\sum_{j<-b-2}\psi_j$ we rewrite $T_1$ as
 \begin{align*}
 T_2(f_1,f_2)(x)&=\sum_{k_1>k_2+b+2}[\psi_{k_1}(L_1)\psi_{k_2}(L_2)m(L_1,L_2)](F)(x,x)\\
 &=\sum_{k_1} [\psi_{k_1}(L_1)\bigg(\sum_{k_2<k_1-b-2} \psi(2^{-k_2}L_2)\bigg)m(L_1,L_2)](f_1\otimes f_2)(x,x)\\
 &= \sum_k [\psi_{k}(L_1)\varphi_k(L_2)m(L_1,L_2)](f_1\otimes f_2)(x,x),
 \end{align*}
where $\varphi(\la_2)=\sum_{k_2<-b-2}\psi_{k_2}(\la_2).$ Then clearly $\supp \varphi \subseteq [0,2^{-b-1}].$ Recall that in the above decomposition of $T_2$ all the appearing sums in $k,$ $k_1,$ and $k_2,$ are in fact finite since $f_1,f_2\in \mA.$ 

Set $m_k:=\psi_{k}\varphi_k m$ and note that $m_k$ is supported in $[2^{k-1},2^{k+1}]\times [0,2^{k-b-1}],$ this is because $$\supp \psi \otimes \varphi \subseteq [2^{-1},2^{1}]\times [0,2^{-b-1}].$$
 Similarly to the case of $T_1$ we expand the function $M_k=m_k(2^{k}\la)$ in a Fourier series. Namely, let $\tpsi$ be a smooth function vanishing outside $[2^{-2},2^{2}]$ and equal to $1$ on $[2^{-1},2^{1}],$ and let $\tfi$ be a smooth function vanishing outside $[0,2^{-b}]$ and equal to $1$ on $[0,2^{-b-1}].$ Then
$$m_k(\la_1,\la_2)=[\tpsi_k(\la_1)\tfi_k(\la_2)]\psi_{k}(\la_1)\varphi_k(\la_2)m(\la),$$
Moreover, $\supp m_k\subseteq [2^{k-1},2^{k+1}]\times [0,2^{k-b-1}],$ and, consequently, $M_k(\la)=m_k(2^{k}\la)$ is supported in $[-2,2]^2.$ Hence, $M_k$ can be expanded into a double Fourier series inside $[-2,2]^2$, i.e., for $\la \in [-2,2]^2,$
$$M_k(\la)=\sum_{n_1,n_2\in \mathbb{Z}}c_{n,k}e^{\frac{\pi}{2} i n_1 \la_1}e^{\frac{\pi}{2} i n_2 \la_2},$$
with the Fourier coefficients
$$c_{n,k}=\frac1{16}\iint_{[-2,2]^2}[\psi\otimes \varphi] m(2^k\xi)\, e^{\frac{\pi}{2} i n_1 \xi_1}e^{\frac{\pi}{2} i n_2 \xi_2}\,d\xi.$$
As with $T_1$, we now use integration by parts, together with the assumption \eqref{eq:Hormcond}. Here it is important that we assume the stronger Mikhlin-Hörmander condition instead of merely the Mikhlin-Marcinkiewicz condition. Indeed, from integration by parts we obtain, for arbitrary $\beta$
$$c_{n,k}=O((1+|n|)^{-\beta})\iint_{[-2,2]^2}\frac{d^{\beta}}{d\xi^{\beta}}([\psi\otimes \varphi] m(2^k\xi))\, e^{\frac{\pi}{2} i n_1 \xi_1}e^{\frac{\pi}{2} i n_2 \xi_2}\,d\xi.$$
However, as $\psi\otimes \varphi$ does not vanish for $\la_2$ close to zero, in order to conclude that the above integral is uniformly bounded we do need \eqref{eq:Hormcond}. In summary we proved that \eqref{eq:decaycnk} holds also in this case.

Coming back to $m_k$ we now write, for $\la \in [2^{k-2},2^{k+2}]\times [0,2^{k-b}]$
$$\psi_{k}(\la_1)\varphi_k(\la_2)m(\la)=\sum_{n\in \mathbb{Z}^2} c_{n,k}\,  e^{\frac{\pi}{2} i n_1 2^{-k}\la_1}e^{\frac{\pi}{2} i n_2 2^{-k}\la_2}.$$
Thus, $m_k,$ $k\in \ZZ,$ can be expressed as
\begin{align*}
&m_k(\la_1,\la_2)=\sum_{n}c_{n,k} [\tpsi_k(\la_1)e^{\frac{\pi}{2} i n_1 2^{-k}\la_1}][\tfi_k(\la_2)e^{\frac{\pi}{2} i n_2 2^{-k}\la_2}]\\
&:=\sum_{n\in \ZZ} c_{n,k}\psi_{k}^{n_1}(\la_1)\varphi^{n_2}_{k}(\la_2).
\end{align*}
 With the aid of \eqref{eq:WD} and \eqref{eq:decaycnk}, arguing as on p. \pageref{p:L2pointeq} we see that
 $$m_k(L_1,L_2)(F)(x,x)=\sum_{n\in \ZZ^2}\, c_{n,k}\, \psi_{k}^{n_1}(L)(f_1)(x)\cdot\varphi_{k}^{n_2}(L)(f_2)(x),$$
where the series on the right converges pointwise to a continuous function on $X.$

Summarizing the above, we have just decomposed
\begin{align*}
%\label{eq:decomS2}
T_2(f_1,f_2)(x)= \sum_{k\in \ZZ} m_k(L_1,L_2)(F)(x,x)=\sum_{n\in \ZZ^2}\, \sum_{k\in \ZZ} c_{n,k}\, \psi_{k}^{n_1}(L)(f_1)(x)\cdot\varphi_{k}^{n_2}(L)(f_2)(x).
\end{align*}
Now, let $\tpsi$ be a real-valued smooth function equal to $1$ on $[2^{-3-b},2^{3+b}]$ and vanishing outside $[2^{-5-b},2^{5+b}].$  Since, for each $n=(n_1,n_2)\in \ZZ^2,$ the function  $\varphi_{k}^{n_2}$ is supported in $[0,2^{k-b}],$ and the function $\psi_{k}^{n_1}$ is supported in $[2^{k-2},2^{k+2}],$ using the assumption  \eqref{eq:PF} we have
$$ T_2(f_1,f_2)(x)=\sum_{n\in \ZZ^2}\, \sum_{k\in \ZZ} c_{n,k} \tpsi_k(L)[\psi_{k}^{n_1}(L)(f_1)\cdot\varphi_{k}^{n_2}(L)(f_2)](x).$$ Hence, if $h$ is a function in $L^q,$ $1/p+1/q=1,$ then we obtain
\begin{equation*}
\int_X T_2(f_1,f_2)(x)h(x)\,d\nu(x)= \int_X \sum_{n\in \ZZ^2}\, \sum_{k\in \ZZ} c_{n,k} \,\psi_{k}^{n_1}(L)(f_1)(x)\cdot\varphi_{k}^{n_2}(L)(f_2)(x) \tpsi_k(L)(h)(x)\,d\nu(x),
\end{equation*}
and, consequently, 
\begin{equation}
\begin{split}
&\label{eq:estS2}|\int_X T_2(f_1,f_2)(x)h(x)\,d\nu|\leq \sum_{n\in\ZZ^2} \sup_{k\in \ZZ}|c_{n,k}|\times\\
& \times\int_X\bigg[\bigg(\sum_{k\in \ZZ} |\psi_{k}^{n_1}(L)(f_1)|^2\bigg)^{1/2}\sup_{k\in \ZZ} \varphi_{k}^{n_2}(L)(f_2)\bigg] \bigg(\sum_{k\in \ZZ} |\tpsi_{k}(L)(h)|^2\bigg)^{1/2}\,d\nu\\
&\lesssim \|m\|_{MH(s)}\, \sum_{n\in \ZZ^2} (1+|n|)^{-s} \left\|\big(\sum_{k\in \ZZ} |\psi_{k}^{n_1}(L)(f_1)|^2\big)^{1/2}\right\|_{p_1}\left\|\sup_{k\in \ZZ}| \varphi_{k}^{n_2}(L)(f_2)|\right\|_{p_2},
\end{split}
\end{equation}
where we used Proposition \ref{thm:CkimpliesSquare} with $\tpsi$ in the second inequality above. Similarly to the estimate for $T_1,$ applying Propositions \ref{thm:CkimpliesSquare} and \ref{thm:CkimpliesMaximal} leads to
 \begin{align*} &\left\|\bigg(\sum_{k} |\psi_{k}^{n_1}(L)(f_1)|^2\bigg)^{1/2}\right\|_{p_1}\lesssim (1+|n_1|)^{\rho}\|f_1\|_{p_1}\qquad\textrm{(cf.\ \eqref{eq:Squarebound})}\\
 &\left\|\sup_{k} |\varphi_{k}^{n_2}(L)(f_2)|\right\|_{p_2}\lesssim (1+|n_2|)^{\rho+2}\|f_2\|_{p_2}\qquad\textrm{(cf.\ \eqref{eq:Maxbound})}.
 \end{align*}
 Finally, the rapidly decaying factor in \eqref{eq:estS2} gives, for $s>2\rho+4$, the desired bound
 $$\|T_2(f_1,f_2)\|_{p}\lesssim \|m\|_{MH(s)}\,\|f_1\|_{p_1}\|f_2\|_{p_2}.$$
 The proof of Theorem \ref{thm:genCoif-Mey} is thus completed.
 \end{proof}

\section{Bilinear multipliers on $\mathbb{Z}^d$}
\label{sec:Zd}

In the present section we formalize Theorem \ref{thm:genCoif-Mey} for bilinear multiplier operators on $\mathbb{Z}^d.$ We also prove a fractional Leibniz rule for the discrete Laplacian. 

Let $e_j=(0,\ldots,1,\ldots,0)\in\mathbb{Z}^d$ be the $j$-th coordinate vector. Consider the discrete Laplacian on $\mathbb{Z}^d,$ given by $$\dLZd(f)(n)=2d\,f(n)-\sum_{j=1}^{d}(f(n+e_j)+f(n-e_j))=2dIf(n)-\sum_{j=1}^d (f*\delta_{e_j}+f*\delta_{-e_j}).$$ The multilinear operators \eqref{eq:genmult} for the discrete Laplacian are defined via Fourier analysis on $\mathbb{Z}^d.$ Namely, let $\TT^d\equiv (-1/2,1/2]^d$ be the d-dimensional torus, let  $$\mFZd(f)(\xi)=\sum_{n\in\mathbb{Z}^d}f(k)e^{2\pi i n\cdot \xi},\qquad \xi\in \TT^d$$ be the Fourier transform on $\mathbb{Z}^d,$ and define
$$\Sin^2(\xi)=4\sum_{j=1}^{d}\sin^2 (\pi \xi_j),\qquad \xi\in\TT^d.$$
 Then, since
$$\mFZd(\dLZd(f))(\xi)=\Sin^2(\xi) \mFZd(f)(\xi),\qquad \xi\in\TT^d,$$
the formula \eqref{eq:genmult} takes the form
\begin{equation}\label{eq:discLapMultDef}
\begin{split}
B_m(f_1,f_2)(n)&:=m((-\dLZd)^{1/2}\otimes I, I\otimes (-\dLZd)^{1/2})(f_1\otimes f_2)(n,n)\\ 
&=\int_{\TT^d}\int_{\TT^d} m(|\Sin(\xi_1)|,|\Sin(\xi_2)|)\mFZd(f_1)(\xi_1)\mFZd(f_2)(\xi_2)e^{-2\pi i n (\xi_1+\xi_2)}\,d\xi,
\end{split}
\end{equation}
where $n\in \ZZ^d.$ Note that the space $\mA_2$ from \eqref{eq:Aspaces} in this case is given by
$$\mA_2=\{g\in L^2(\TT^d)\colon \mFZd(g)(\xi)=0\textrm{ for some $\varepsilon>0$ and all $|\xi|<\varepsilon.$}\}$$ 

Throughout this section we denote by $L^p$ the space $l^p(\mathbb{Z}^d)$ equipped with the counting measure. Using Theorem \ref{thm:genCoif-Mey} we prove the following Coifman-Meyer multiplier theorem for the discrete Laplacian.

\begin{thm}
\label{thm:discLap_Coif-Mey} Assume that $m$ satisfies H\"ormander's condition \eqref{eq:Hormcond} of order $s>d+4.$ Then the bilinear multiplier operator given by \eqref{eq:discLapMultDef} is bounded from $L^{p_1}\times L^{p_2}$ to $L^p,$ where $1/p_1+1/p_2=1/p,$ and $p_1,p_2,p>1.$ Moreover, the bound \eqref{eq:thm_gen_bound} holds.
\end{thm}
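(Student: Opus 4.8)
The plan is to obtain Theorem~\ref{thm:discLap_Coif-Mey} as a direct instance of Theorem~\ref{thm:genCoif-Mey}, applied to the operator $L=(-\dLZd)^{1/2}$ on $\ell^2(\ZZ^d)\cong L^2(\TT^d)$. Thus the whole task reduces to checking that $L$ meets every standing hypothesis of that theorem --- non-negativity, self-adjointness, injectivity on its domain, \eqref{eq:Con}, \eqref{eq:MH}, \eqref{eq:WD} and \eqref{eq:PF} --- and to pinning down the order $\rho$ of the Mikhlin--H\"ormander calculus so that the abstract threshold $s>2\rho+4$ becomes the stated $s>d+4$. Via $\mFZd$, the operator $L$ is unitarily equivalent to multiplication by $|\Sin(\xi)|$ on $L^2(\TT^d)$, and this representation (together with the identification already recorded in \eqref{eq:discLapMultDef}) makes all but two of the verifications routine.

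First the easy points. Since $|\Sin(\xi)|\ge 0$ is real, $L$ is non-negative and self-adjoint; since $\{\xi\in\TT^d:\Sin(\xi)=0\}=\{0\}$ is a $\nu$-null set, the projection $E_{\{0\}}$ vanishes and $L$ is injective on its domain. For \eqref{eq:Con} observe that, up to the scalar $e^{-2dt}$, the semigroup $e^{t\dLZd}$ is convolution with the positive kernel obtained by exponentiating $t\sum_{j}(\delta_{e_j}+\delta_{-e_j})$, whose total mass is $e^{2dt}$; hence $e^{t\dLZd}$ is convolution with a symmetric probability measure on $\ZZ^d$ and is a contraction on every $\ell^p$ by Young's inequality, and Bochner subordination passes this to $e^{-tL}$. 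Finally \eqref{eq:WD} is essentially free in the discrete setting: every function on $\ZZ^d\times\ZZ^d$ is automatically continuous, the diagonal carries positive counting measure so the restriction in \eqref{eq:genmult} is literal, and since $\ell^2(\ZZ^d\times\ZZ^d)\hookrightarrow\ell^\infty(\ZZ^d\times\ZZ^d)$ we get, for $f_1,f_2\in\mA$ and bounded $m$, the bound $\|m(L_1,L_2)(f_1\otimes f_2)\|_{\infty}\le\|m(L_1,L_2)(f_1\otimes f_2)\|_{2}\le\|m\|_{\infty}\|f_1\|_2\|f_2\|_2$.

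Two points carry the real content. For \eqref{eq:MH} I would invoke the known spectral multiplier theory: $-\dLZd$ enjoys Gaussian heat-kernel bounds on $\ZZ^d$, a doubling space of homogeneous dimension $d$, so a Duong--Ouhabaz--Sikora type theorem grants $L$ a Mikhlin--H\"ormander functional calculus of every order $\rho>d/2$ (the $MH$ condition is preserved under $\lambda\mapsto\lambda^{1/2}$). For \eqref{eq:PF} the crucial structural fact is that $\xi\mapsto|\Sin(\xi)|$ is a symmetric, subadditive, translation-invariant pseudonorm on $\TT^d$: from $|\sin\pi(\eta_j+\zeta_j)|\le|\sin\pi\eta_j|+|\sin\pi\zeta_j|$ together with Minkowski's inequality in $\ell^2_j$ one gets $|\Sin(\eta+\zeta)|\le|\Sin(\eta)|+|\Sin(\zeta)|$, while $|\Sin(-\eta)|=|\Sin(\eta)|$. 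Because products on $\ZZ^d$ transform under $\mFZd$ into convolutions on $\TT^d$, if the factor $\varphi_k(L)f_1$ has $\mFZd$-support in $\{|\Sin|\le 2^{k-b}\}$ and $\psi_k(L)f_2$ has $\mFZd$-support in $\{|\Sin|\in[2^{k-2},2^{k+2}]\}$, then the spectrum of the product, measured through $|\Sin|$, is contained in $\{|\Sin(\eta+\zeta)|:|\Sin(\eta)|\le 2^{k-b},\,|\Sin(\zeta)|\in[2^{k-2},2^{k+2}]\}$. The two triangle inequalities confine this to $[2^{k-3},2^{k+3}]$ as soon as $b$ is large enough (say $b\ge 3$, so that $2^{k-2}-2^{k-b}\ge 2^{k-3}$ and $2^{k-b}+2^{k+2}\le 2^{k+3}$), an interval contained in $[2^{k-3-b},2^{k+3+b}]$ where $\tpsi_k\equiv 1$; hence $\tpsi_k(L)$ fixes the product and \eqref{eq:PF} holds.

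With all hypotheses in hand, Theorem~\ref{thm:genCoif-Mey} yields the boundedness of $B_m$ and the estimate \eqref{eq:thm_gen_bound} whenever $s>2\rho+4$, and since $\rho$ may be taken arbitrarily close to $d/2$, the requirement collapses to the stated $s>d+4$. I expect the main obstacle to be \eqref{eq:MH} with a sharp order: everything downstream hinges on running the functional calculus down to smoothness just above $d/2$, so one must make sure the quoted spectral multiplier result genuinely applies to $L=(-\dLZd)^{1/2}$ at this order. A secondary, purely bookkeeping subtlety is keeping the frequency localizations honest under the nonlinear substitution $\xi\mapsto|\Sin(\xi)|$ in the verification of \eqref{eq:PF}.
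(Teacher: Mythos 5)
Your proposal is correct and follows the paper's overall strategy (reduce to Theorem \ref{thm:genCoif-Mey} and check its hypotheses), but your verification of \eqref{eq:PF} --- the only step with real content --- is genuinely different from, and cleaner than, the paper's. You observe that $|\Sin|$ is exactly subadditive: the componentwise bound $|\sin\pi(s+t)|\le|\sin\pi s|+|\sin\pi t|$ plus Minkowski's inequality in $\ell^2_j$ give $|\Sin(\eta+\zeta)|\le|\Sin(\eta)|+|\Sin(\zeta)|$ with no dimensional constant, and symmetry then yields the reverse inequality $|\Sin(\xi)|\ge|\Sin(\xi-\eta)|-|\Sin(\eta)|$. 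This confines the $|\Sin|$-support of the relevant convolution to $[2^{k-3},2^{k+3}]$ and makes \eqref{eq:PF} work already with $b=3$. The paper instead sums the componentwise bound in $\ell^1$ and compares norms, which costs a factor $\sqrt d$ in each direction; to absorb it the author must restrict to $k\le 2+\tfrac12\log_2 d$ (using that the spectrum lies in $[0,2\sqrt d]$), establish the lower bound $|\cos\pi\eta_j|>1/2$ on the support of $\varphi_k$, and take $b=7+\tfrac12\log_2 d$. Your route removes all of that bookkeeping. The remaining verifications (injectivity, \eqref{eq:Con} via the probability-measure structure of the discrete heat semigroup plus subordination, \eqref{eq:WD} via $\ell^2\hookrightarrow\ell^\infty$ and the fact that the diagonal has positive counting measure) match what the paper simply declares well known; your \eqref{eq:MH} citation is vaguer than the paper's appeal to Alexopoulos but lands in the same place. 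One caveat you share with the paper: the calculus order $\rho$ in \eqref{eq:Mh} is an integer, so the best available choice is $\rho=[d/2]+1$, which makes the abstract threshold $2\rho+4\ge d+5$ rather than the stated $d+4$; ``$\rho$ arbitrarily close to $d/2$'' is not literally available in this framework, though the discrepancy is cosmetic and is already present in the paper's own deduction.
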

\begin{proof}
It is well known that $L=(-\Delta_{\ZZ^d})^{1/2}$ is injective on $L^2$ and satisfies \eqref{eq:Con}. Moreover, it also satisfies \eqref{eq:WD} since for $f_1,f_2\in \mA$ we have $\mFZd(f_1)(\xi_1)\mFZd(f_2)(\xi_2)\in L^1(\TT^d\times \TT^d).$ From \cite[Theorem 1.1]{Alex_1} it follows that $-\dLZd$ has a Mikhlin-Hörmander functional calculus (of order $[d/2]+1$). Then, clearly, the same is true for $(-\dLZd)^{1/2}.$ Hence,  \eqref{eq:MH} has been justified.

To apply Theorem \ref{thm:genCoif-Mey} it remains to show that $L=(-\dLZd)^{1/2}$ satisfies \eqref{eq:PF}.  
We prove it with $b=7+\frac12 \log_2 d$. Since the spectrum of $(-\dLZd)^{1/2}$ is contained in $[0,2\sqrt d],$ we have $\psi_k((-\dLZd)^{1/2})\equiv 0,$ if $k>2+\frac12 \log_2 d.$ Hence, it suffices to show  \eqref{eq:PF} for $k\leq 2+\frac12 \log_2 d.$ Using elementary Fourier analysis on $\mathbb{Z}^d$ we see that to prove  \eqref{eq:PF} it is enough to show that $$\tpsi_k\circ |\Sin|=1\qquad\textrm{on the support of}\qquad ((\psi_k\circ |\Sin|)\mFZd(f_1)) *_{\mathbb{T}^d} ((\varphi_k\circ |\Sin|))\mFZd(f_2)),$$
where $\tpsi_k,$ $\psi_k,$ and $\varphi_k$ are the functions from \eqref{eq:PF}.  
In other words that we are left with proving that if $|\Sin(\xi)|<2^{k-3-b}$ or $|\Sin(\xi)|>2^{k+3+b},$ then
\begin{equation}
\label{eq:discIIasum}
\int_{\mathbb{T}^d}\psi_k(|\Sin(\xi-\eta)|)\mFZd(f_1)(\xi-\eta)\cdot   \varphi_k(|\Sin(\eta)|)\mFZd(f_2)(\eta)\,d\eta=0.
\end{equation}

The formula \begin{equation}\label{eq:trigform1}\sin \pi (t-s)=\sin\pi t \cos \pi s-\sin \pi s \cos \pi t,\qquad s,t \in \mathbb{T},\end{equation} leads to
$|\sin \pi (\xi_j)|\leq |\sin \pi (\xi_j-\eta_j)|+|\sin \pi \eta_j|,$ $j=1,\ldots,d,$ and, consequently,
$$|\Sin(\xi)|\leq \sqrt d (|\Sin(\xi-\eta)|+|\Sin(\eta)|), \qquad \eta\in \mathbb{T}^d.$$
From the above it follows that if $|\Sin(\xi)|>2^{k+3+b},$ then for every $\eta\in\mathbb{T}^d$ the integrand in \eqref{eq:discIIasum} vanishes.

It remains to show that also $ |\Sin(\xi)|<2^{k-3-b}$ forces \eqref{eq:discIIasum}. We argue by contradiction assuming that $ |\Sin(\xi)|<2^{k-3-b}$ yet the integral in \eqref{eq:discIIasum} is non-zero. Then, for some $\eta\in \mathbb{T}^d,$ we must have $\psi_k(|\Sin(\xi-\eta)|)\, \varphi_k(|\Sin(\eta)|)\neq 0,$ which implies that
\begin{equation}\label{eq:consCase2}
2^{k-1}\leq|\Sin(\xi-\eta)|\leq 2^{k+1}\qquad \textrm{and}\qquad |\Sin(\eta)|\leq 2^{k-b}.
\end{equation}
Note that since $k\le2+\frac 12 \log_2 d,$ the integral in \eqref{eq:discIIasum} runs over $|\Sin(\eta)|\leq 2^{k-b}\leq 2^{-1},$ and, consequently, we consider only those $\eta$ satisfying $|\cos \pi \eta_j|>\sqrt3/2>1/2,$ for every $j=1,\ldots,d.$ Now, using \eqref{eq:trigform1} (with $t-s=\xi_j,$ $s=-\eta_j$)  we obtain
\begin{align*}&|\sin \pi \xi_j|\geq |\sin \pi (\xi_j-\eta_j)||\cos\pi \eta_j|-|\cos\pi (\xi_j-\eta_j)||\sin\pi \eta_j|\\&\geq \frac 12|\sin \pi (\xi_j-\eta_j)|-|\sin\pi \eta_j|.\end{align*}
Summing the above estimate in $j$ and using Schwarz inequality we arrive at
\begin{align*}&\sqrt{d}|\Sin(\xi)|\geq \sum_{j=1}^d|\sin \pi \xi_j|\geq \frac12 \sum_{j=1}^d |\sin \pi (\xi_j-\eta_j)|-\sum_{j=1}^d |\sin\pi \eta_j|\geq \frac12 |\Sin(\xi-\eta)|-\sqrt{d}|\Sin(\eta)|.
\end{align*}
Now, since $ |\Sin(\xi)|<2^{k-3-b},$ using \eqref{eq:consCase2} we arrive at
$$2^{k-b-3}>|\Sin(\xi)|>\frac1{\sqrt{d}}\big(2^{k-1}-\sqrt{d}2^{k-b}\big)=\frac1{\sqrt{d}}(2^{k-1}-2^{k-7})>\frac1{\sqrt{d}}2^{k-2}=2^{k-b+5},$$
which is a contradiction.
\end{proof}

As a corollary of Theorem \ref{thm:discLap_Coif-Mey} we prove a fractional Leibniz rule for the discrete Laplacian on $\mathbb{Z}^d.$ For $\Real(z)\ge 0$ and $h\in L^2$ the complex derivative $(-\dLZd)^z h$ is given by
$$\mFZd((-\dLZd)^z h)(\xi)=|\Sin \xi|^{2z}\mFZd(h)(\xi),\qquad \xi \in \TT^d.$$ 
This coincides with taking the $n$-th composition of $(-\dLZd)$ when $z=n$ is a non-negative integer.  Clearly, $(-\dLZd)^z$ is bounded on $L^2.$ Moreover, when $z=s\in \RR,$ $s\ge 0,$ then $(-\dLZd)^s$ is also bounded on all $L^p,$ $1\leq p\leq \infty.$ To see this we just use the Taylor series expansion of the function $x^{s}=(1-(1-x))^{s},$ with $x$ replaced by $(-\dLZd)/(4d).$  This is legitimate since $(-\dLZd)/(4d)$ is a contraction on all $L^p$ spaces. Our fractional Leibniz rule is the following.
\begin{cor}
\label{cor:LeibDiscLap}
Let $1/p=1/p_1+1/p_2,$ with $p,p_1,p_2>1.$ Then, for every $s>0,$
\begin{equation}
\label{eq:LeibDiscLap}
\|(-\dLZd)^{s}(fg)\|_p\lesssim \|(-\dLZd)^{s}f\|_{p_1}\|g\|_{p_2}+ \|(-\dLZd)^{s}g\|_{p_2}\|f\|_{p_1},
\end{equation}
where $f,g\in \mA.$
\end{cor}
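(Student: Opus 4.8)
The plan is to recognize that $(-\dLZd)^{s}(fg)$ is the bilinear operator whose multiplier, expressed in the frequency variables $\xi_1,\xi_2\in\TT^d$ of $f$ and $g$, equals $|\Sin(\xi_1+\xi_2)|^{2s}$. This is \emph{not} of the bi-radial form $m(|\Sin\xi_1|,|\Sin\xi_2|)$ covered by Theorem \ref{thm:discLap_Coif-Mey}, the obstruction being the dependence on the \emph{sum} $\xi_1+\xi_2$. The whole strategy is to trade this sum-dependence for bi-radial multipliers times first-order difference operators, and then invoke Theorem \ref{thm:discLap_Coif-Mey}. The tools I would use are: (i) the exact identity, a consequence of \eqref{eq:trigform1},
\[
\Sin^2(\xi_1+\xi_2)=\Sin^2\xi_1+\Sin^2\xi_2-\sum_{j=1}^d(u_jv_j+\bar u_j\bar v_j),\qquad u_j=e^{2\pi i\xi_{1,j}}-1,\ v_j=e^{2\pi i\xi_{2,j}}-1,
\]
in which every cross term is a product of one first-difference symbol in $\xi_1$ and one in $\xi_2$ — this is the (almost) integer-order Leibniz rule for the difference operators $\Delta_j$ that the introduction alludes to; (ii) the $L^p$-boundedness ($1<p<\infty$) of the discrete Riesz transforms $R_j=\Delta_j(-\dLZd)^{-1/2}$ and of their products, which lets one exchange each difference factor for a half-power of $-\dLZd$ at the cost of a bounded operator commuting with $(-\dLZd)^{s}$; and (iii) Theorem \ref{thm:discLap_Coif-Mey} itself.

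I would split the operator bilinearly with bi-radial cut-offs into an $f$-dominant part ($|\Sin\xi_2|\le c\,|\Sin\xi_1|$), a symmetric $g$-dominant part, and a comparable part ($|\Sin\xi_1|\approx|\Sin\xi_2|$). On the $f$-dominant region one has $\Sin^2(\xi_1+\xi_2)=\Sin^2\xi_1(1+\rho)$ with $\rho=\big(\Sin^2\xi_2-\sum_j(u_jv_j+\bar u_j\bar v_j)\big)/\Sin^2\xi_1$; since $|\rho|\le c+2\sqrt c<1$ for $c$ small, the binomial series gives $|\Sin(\xi_1+\xi_2)|^{2s}=|\Sin\xi_1|^{2s}\sum_{k\ge0}\binom sk\rho^k$. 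Expanding each $\rho^k$ into monomials in $\Sin^2\xi_2$ and in the factored cross terms, every monomial symbol factors as a bounded bi-radial H\"ormander multiplier (a power of the ratio $|\Sin\xi_2|/|\Sin\xi_1|$) times Riesz-transform products $R_\alpha$ acting on $f$ and $R'_\alpha$ acting on $g$, times $|\Sin\xi_1|^{2s}$. Hence each monomial operator equals $B_{\tilde m}\big((-\dLZd)^{s}R_\alpha f,\ R'_\alpha g\big)$, and Theorem \ref{thm:discLap_Coif-Mey} together with the Riesz bounds yields a bound $\lesssim\|\tilde m\|_{MH(s_0)}\,\|(-\dLZd)^{s}f\|_{p_1}\|g\|_{p_2}$ for the fixed order $s_0>d+4$. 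Because $\|\tilde m\|_{MH(s_0)}\lesssim(c+2\sqrt c)^k$ up to polynomial-in-$k$ factors, multiplying by $|\binom sk|$ and summing converges, producing the first term of \eqref{eq:LeibDiscLap}; the $g$-dominant region gives the second term symmetrically.

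The main obstacle is the comparable region. There $|\Sin\xi_1|\approx|\Sin\xi_2|$, yet $\Sin(\xi_1+\xi_2)$ may be far smaller and in fact vanishes on the set $\xi_2\equiv-\xi_1$; consequently the analogous expansion around $\Sin^2\xi_1+\Sin^2\xi_2$ has ratio tending to $1$ on that set, and $|\Sin(\xi_1+\xi_2)|^{2s}$ is merely H\"older (not smooth) there for non-integer $s$, so the geometric summation breaks down. I would handle this by an additional decomposition according to the dyadic size of $\Sin(\xi_1+\xi_2)$ relative to $|\Sin\xi_1|$: the resulting localization in the \emph{output} frequency is a spectral multiplier $\Psi\big((-\dLZd)^{1/2}\big)$ applied to the product, which is $L^p$-bounded by \eqref{eq:MH}, while the smallness $|\Sin(\xi_1+\xi_2)|^{2s}\lesssim(\text{dyadic scale})^{2s}$ supplies the decay needed to sum the pieces; away from the zero set the expansion of the previous paragraph applies with uniform geometric control. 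Reconciling the non-smoothness of $|\Sin(\xi_1+\xi_2)|^{2s}$ at its zeros with the bi-radial H\"ormander calculus is the delicate step, and it is exactly here that the difference operators — not the functional calculus alone — are indispensable, which also explains why the method does not transfer to the Jacobi setting.
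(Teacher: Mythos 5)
Your route is genuinely different from the paper's, and most of it is sound. The paper first reduces to $d=1$ (via the multivariate multiplier theorem applied to $(\sum_j L_j)^s(\sum_j L_j^s)^{-1}$ and $L_j^s(-\dLZd)^{-s}$), splits off the spectrally high parts of $f$ and $g$ (trivial, since $(-\dLZ)^{-s}\eta_1(-\dLZ)$ is bounded), and then converts the output-frequency symbol $|2\sin\pi(\xi_1+\xi_2)|^{2s}$ into a bi-radial one using $\sin\pi(\xi_1+\xi_2)=\sin\pi\xi_1\cos\pi\xi_2+\sin\pi\xi_2\cos\pi\xi_1$ together with a sign decomposition implemented by the projection $H$ onto positive frequencies; the same-sign quadrants are handled directly by Theorem \ref{thm:discLap_Coif-Mey}, while the opposite-sign quadrants, where the symbol degenerates near $|\sin\pi\xi_1|=|\sin\pi\xi_2|$, are handled by Stein's complex interpolation between the powers $n+iv$ and $iv$, the latter via Lemma \ref{lem:DiscFracPF}. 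You instead stay in $\ZZ^d$, use your exact identity for $\Sin^2(\xi_1+\xi_2)$ (which is correct: it agrees with the expansion $|u|^2+|v|^2-|u|^2|v|^2+u\bar v+\bar u v$ because $(u+\bar u)(v+\bar v)=|u|^2|v|^2$), run a binomial series on the unbalanced regions, and peel the difference factors off as discrete Riesz transforms commuting with $(-\dLZd)^s$. This avoids the dimensional reduction, the Hilbert transform, and complex interpolation altogether; the price is the small-parameter bookkeeping (your $c$ must be small enough that the geometric gain beats the growth coming from the number of monomials in $\rho^k$ and the iterated Riesz-transform norms) and a separate argument on the balanced region.

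The one place where the argument as written would fail is the summation in the balanced region. Decomposing by the output scale $2^{k-l}$ yields decay $2^{-2sl}$ in $l$ but no decay in the common input scale $k$, so bounding each piece in $L^p$ by \eqref{eq:MH} and summing by the triangle inequality diverges. What is needed, for each fixed $l$, is the trilinear duality argument from the proof of Theorem \ref{thm:genCoif-Mey}: write $\Psi_{k-l}(L)(-\dLZd)^s=2^{-2sl}\,2^{2sk}\,\tilde\Psi_{k-l}(L)$ with $\tilde\Psi(\la)=\la^{2s}\Psi(\la)$, absorb $2^{2sk}$ into $(-\dLZd)^sf$ at input scale $2^k$, pair against $h\in L^q$, and apply Cauchy--Schwarz in $k$ to the square functions of $(-\dLZd)^sf$ and $g$ together with the maximal function $\sup_k|\tilde\Psi_{k-l}(L)h|$ (re-indexed in $k-l$ so that Propositions \ref{thm:CkimpliesSquare} and \ref{thm:CkimpliesMaximal} give constants independent of $l$); only then sum the uniform bounds against $2^{-2sl}$. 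You also implicitly need the $d$-dimensional analogue of Lemma \ref{lem:DiscFracPF} to identify $(-\dLZd)^s\circ B_\varphi$ with the bilinear integral whose symbol is $\varphi$ times $|\Sin(\xi_1+\xi_2)|^{2s}$; the paper states it only for $d=1$, but its proof (the eigenfunction identity for exponentials plus polynomial approximation) goes through verbatim. With these two points supplied, your proof is complete.
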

\begin{remark1}
	Note that if $f,g\in\mA$ then $fg\in L^2,$ hence $(-\dLZd)^{s}(fg)$ makes sense.  
\end{remark1}
\begin{remark2}
	Since $(-\dLZd)^{s}$ is bounded on all $L^p$ spaces, $1\leq p\leq \infty,$  a version of \eqref{eq:LeibDiscLap} without the Laplacians on the right hand side is obvious. This is in contrast with the fractional Leibniz rule on $\RR^d.$
\end{remark2}
In the proof of the corollary we shall need two lemmata. The first of them follows from the $l^p(\mathbb{Z})$ boundedness of the discrete Hilbert transform.
\begin{lem}
\label{lem:discLapSpecMult}
The one-dimensional linear multiplier operator
$$H(f)(n)=\int_0^{1/2}\mFZ(f)(x)e^{2\pi i \xi n}\,d\xi,\qquad n\in \mathbb{Z}$$
is bounded on all $l^p(\mathbb{Z})$ spaces, $1<p<\infty.$
\end{lem}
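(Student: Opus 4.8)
The plan is to realize the one-dimensional multiplier $H$ as (essentially) the projection onto the positive frequencies, which is a bounded operator on $l^p(\ZZ)$ precisely because it can be written in terms of the discrete Hilbert transform together with the identity and the zeroth-frequency correction. First I would recall that the multiplier symbol associated with $H$ is the indicator function $\mathbf{1}_{(0,1/2)}(\xi)$ on the torus $\TT\equiv(-1/2,1/2]$, so that $H$ is the Fourier-multiplier operator with symbol $\chi(\xi)=\mathbf{1}_{(0,1/2)}(\xi)$. The discrete Hilbert transform $\mathcal{H}$ on $\ZZ$ has symbol $-i\,\sgn(\xi)=-i\,\mathbf{1}_{(0,1/2)}(\xi)+i\,\mathbf{1}_{(-1/2,0)}(\xi)$ on $\TT$, and it is a classical fact (going back to M.\ Riesz, transferred to the discrete setting) that $\mathcal{H}$ is bounded on $l^p(\ZZ)$ for all $1<p<\infty$.

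The key algebraic step is the elementary identity relating the indicator to the signum:
$$
\mathbf{1}_{(0,1/2)}(\xi)=\tfrac12\big(1-\mathbf{1}_{\{0\}}(\xi)\big)+\tfrac{i}{2}\big(-i\,\sgn(\xi)\big),\qquad \xi\in\TT,
$$
valid for $\xi\neq 0$ up to the single point $\xi=0$, which carries no $l^p$ mass. In operator terms this reads
$$
H=\tfrac12\big(I-P_0\big)+\tfrac{i}{2}\,\mathcal{H},
$$
where $P_0$ is the rank-consideration projection onto the constant (zero-frequency) component, i.e.\ the averaging operator $P_0 f=\big(\lim_N (2N+1)^{-1}\sum_{|n|\le N} f(n)\big)\mathbf{1}$. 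Since the symbol $\mathbf{1}_{\{0\}}$ differs from the zero function only on a null set of $\TT$, the operator $P_0$ acts as the zero operator on $l^p(\ZZ)$ for $1\le p<\infty$ (a nonzero constant sequence is not in $l^p$, so the projection of any $l^p$ sequence onto it must vanish); thus effectively $H=\tfrac12 I+\tfrac{i}{2}\mathcal{H}$ on $l^p(\ZZ)$. Boundedness of $H$ on $l^p(\ZZ)$ then follows immediately from the triangle inequality, using that $I$ is trivially bounded and that $\mathcal{H}$ is bounded by the cited $l^p$-boundedness of the discrete Hilbert transform.

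I would carry out the steps in this order: (i) identify the symbol of $H$ as $\mathbf{1}_{(0,1/2)}$; (ii) recall the symbol and the $l^p$-boundedness of the discrete Hilbert transform $\mathcal{H}$; (iii) establish the pointwise symbol identity above and translate it into the operator identity $H=\tfrac12 I+\tfrac{i}{2}\mathcal{H}$ on $l^p(\ZZ)$, carefully noting that the $\xi=0$ discrepancy is irrelevant on $l^p$; (iv) conclude by the triangle inequality. The only subtlety — and the point I expect to require the most care — is the treatment of the endpoint/zero-frequency: one must verify that redefining the symbol at the single point $\xi=0$ (or equivalently, discarding the $P_0$ term) does not affect the operator on $l^p(\ZZ)$, which is clear since point-mass modifications of a torus multiplier act trivially on $l^p$ for $p<\infty$. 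No genuinely hard analysis is needed beyond the standing assumption that the discrete Hilbert transform is $l^p$-bounded.
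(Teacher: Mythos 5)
Your proposal is correct and is essentially the paper's own (one-line) argument made explicit: the paper simply asserts that the lemma ``follows from the $l^p(\mathbb{Z})$ boundedness of the discrete Hilbert transform,'' and your symbol identity $\mathbf{1}_{(0,1/2)}=\tfrac12+\tfrac12\sgn$ a.e.\ on $\mathbb{T}$, together with the observation that modifying a multiplier symbol on a Lebesgue-null subset of $\mathbb{T}$ does not change the operator, is the standard way to make that reduction precise. The only caveat worth a footnote is that if one defines the discrete Hilbert transform as convolution with $1/(\pi n)$ its symbol is the sawtooth $i(\sgn\xi-2\xi)$ rather than exactly $-i\sgn\xi$, but the discrepancy is a bounded-variation symbol handled by the same classical results, so nothing is lost.
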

The second of the lemmata is the following.
\begin{lem}
	\label{lem:DiscFracPF}
	Let $d=1.$ Assume that $\varphi\colon (0,\infty)^2\to \bC$ is a bounded function that satisfies the Mikhlin-H\"ormander condition \eqref{eq:MH} of order $6.$ Then, for $\Real(z)\geq 0$ we have 
	\begin{equation}
	\label{eq:Claim_DiscLap_Leib}
	\begin{split}
	&(-\dLZ)^{z}(B_{\varphi}(f, g))(n)\\
	&=\iint_{\TT^2} \varphi(2|\sin \pi\xi_1|,2|\sin \pi\xi_2|)\,|2\sin\pi(\xi_1+\xi_2)|^{2z} e^{2\pi i (\xi_1+\xi_2)n}\, \mFZ(f)(\xi_1)\mFZ(g)(\xi_2)\,d\xi,
	\end{split}
\end{equation}	where $f,g\in \mA,$ and  $n\in \ZZ.$
\end{lem}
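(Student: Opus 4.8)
The plan is to reduce everything to a computation on the Fourier side, since both the bilinear operator $B_\varphi$ and the fractional power $(-\dLZ)^z$ are defined through $\mFZ$. The guiding principle is that the ``output frequency'' of $B_\varphi(f,g)$ associated with the input frequencies $(\xi_1,\xi_2)$ is $\xi_1+\xi_2$ taken modulo $1$, and that $(-\dLZ)^z$ acts as the Fourier multiplier with symbol $|2\sin\pi\zeta|^{2z}$ evaluated at this output frequency. Setting $\zeta=\xi_1+\xi_2$ and using the $1$-periodicity of $\zeta\mapsto|2\sin\pi\zeta|^{2z}$ then produces exactly the factor $|2\sin\pi(\xi_1+\xi_2)|^{2z}$ appearing in \eqref{eq:Claim_DiscLap_Leib}.

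First I would record that, for $f,g\in\mA$, one has $\mFZ(f),\mFZ(g)\in L^2(\TT)\subseteq L^1(\TT)$, so that $\mFZ(f)\otimes\mFZ(g)\in L^1(\TT^2)$; this makes the defining integral for $B_\varphi(f,g)$ absolutely convergent and legitimizes all the interchanges below. Performing the (measure preserving, modulo $1$) change of variables $\zeta=\xi_1+\xi_2$, $\eta=\xi_2$ in \eqref{eq:discLapMultDef} exhibits $B_\varphi(f,g)$ as the inverse Fourier transform of
\begin{equation*}
h(\zeta)=\int_{\TT}\varphi(2|\sin\pi(\zeta-\eta)|,2|\sin\pi\eta|)\,\mFZ(f)(\zeta-\eta)\,\mFZ(g)(\eta)\,d\eta,
\end{equation*}
where $\zeta-\eta$ is reduced modulo $1$ and we use that $\mFZ(f)$ is $1$-periodic. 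By Young's inequality ($L^2\ast L^1\subseteq L^2$) together with the boundedness of $\varphi$, the function $h$ lies in $L^2(\TT)$; hence $B_\varphi(f,g)\in\ell^2(\ZZ)$ and $\mFZ(B_\varphi(f,g))=h$. In particular $(-\dLZ)^z B_\varphi(f,g)$ is an honest $\ell^2$ function, defined by the prescription $\mFZ\bigl((-\dLZ)^z B_\varphi(f,g)\bigr)(\zeta)=|2\sin\pi\zeta|^{2z}h(\zeta)$.

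It then remains to invert this identity and undo the change of variables. Since $\Real(z)\ge0$, the symbol satisfies $\bigl||2\sin\pi\zeta|^{2z}\bigr|=|2\sin\pi\zeta|^{2\Real(z)}\le 2^{2\Real(z)}$, so it is bounded and $|2\sin\pi\zeta|^{2z}h\in L^2(\TT)\subseteq L^1(\TT)$; thus the inverse Fourier transform of the product is an absolutely convergent integral. Substituting the expression for $h$, applying Fubini, and returning to the variables $\xi_1=\zeta-\eta$, $\xi_2=\eta$ --- at which point $|2\sin\pi\zeta|^{2z}$ becomes $|2\sin\pi(\xi_1+\xi_2)|^{2z}$ by periodicity --- yields precisely the right-hand side of \eqref{eq:Claim_DiscLap_Leib}.

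I expect the content here to be bookkeeping rather than a genuine obstacle: the only points requiring care are the two applications of Fubini, the modulo-$1$ change of variables on $\TT^2$, and the verification that $(-\dLZ)^z$, defined through the $\ell^2$ Fourier/spectral calculus, acts on $B_\varphi(f,g)$ exactly by multiplying $\mFZ(B_\varphi(f,g))$ by the symbol. The hypotheses $f,g\in\mA$ (guaranteeing $L^1$ integrability of $\mFZ(f)\otimes\mFZ(g)$) and $\Real(z)\ge0$ (guaranteeing boundedness of the symbol, hence no singularity at $\zeta=0$) are exactly what force every integral in sight to converge absolutely. I note that only the boundedness of $\varphi$ is used in this lemma; the order-$6$ Mikhlin--H\"ormander hypothesis is not needed for the identity itself and enters only later, through Theorem \ref{thm:discLap_Coif-Mey}, when $B_\varphi$ is estimated on $L^p$.
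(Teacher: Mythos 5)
Your proof is correct, but it takes a genuinely different route from the paper's. The paper first establishes the identity for integer powers $z=k$ by iterating the eigenrelation $-\dLZ(e^{2\pi i t\cdot})(n)=4(\sin^2\pi t)\,e^{2\pi i tn}$ under the integral sign, deduces it for arbitrary polynomials $P(-\dLZ)$, and then passes to $\la\mapsto\la^{z}$ by a density (polynomial approximation) argument for $\Real(z)>0$ and a continuity argument for the boundary case $\Real(z)=0$; crucially, it invokes Theorem \ref{thm:discLap_Coif-Mey} (hence the order-$6$ Mikhlin--H\"ormander hypothesis) just to know that $B_\varphi(f,g)\in\ell^2(\ZZ)$ so that the left-hand side makes sense. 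You instead compute $\mFZ(B_\varphi(f,g))$ directly as the function $h$ obtained from the modulo-$1$ change of variables, get $B_\varphi(f,g)\in\ell^2$ from the pointwise bound $|h|\le\|\varphi\|_\infty\,(|\mFZ(f)|\ast|\mFZ(g)|)$, and then apply the literal definition of $(-\dLZ)^z$ as the multiplier with bounded symbol $|2\sin\pi\zeta|^{2z}$. This is shorter, handles all $\Real(z)\ge0$ at once without the approximation and continuity steps, and --- as you correctly observe --- shows the Mikhlin--H\"ormander hypothesis is not actually needed for the identity, only boundedness of $\varphi$. What the paper's heavier scheme buys is robustness: the same template (integer powers via a Leibniz-type rule on the joint eigenfunctions, then polynomials, then a density and continuity argument) is reused almost verbatim for Lemma \ref{lem:DunklFracPF} in the Dunkl setting, where the output frequency is not simply $\xi_1+\xi_2$ modulo $1$ and no direct identification of the transform of $B_\varphi(f,g)$ as a convolution on a group is available. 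The only point in your write-up deserving a word of care is the appeal to Young's inequality: the integral defining $h$ is not literally a convolution since $\varphi$ depends on both variables, so one should pass through the pointwise domination by $\|\varphi\|_\infty\,(|\mFZ(f)|\ast|\mFZ(g)|)$ first; this is routine and does not affect correctness.
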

\begin{proof}
From Theorem \ref{thm:discLap_Coif-Mey} and the assumptions on $\varphi$ it follows that $B_{\varphi}(f,g)\in \ell^2(\ZZ).$ Thus, the left hand side of \eqref{eq:Claim_DiscLap_Leib} makes sense as a function on $\ell^2(\ZZ).$ Moreover, a continuity argument shows that it suffices to demonstrate  \eqref{eq:Claim_DiscLap_Leib} for $\Real(z)>0.$ 

%To see this assume that the formula is true for $\Real(z)>0$ and  let $\{z_r\}_{r\in\NN}$ be a sequence such that $\Real(z_r)>0$ and $\lim_{r\to \infty}z_r=z$ with $\Real(z)=0.$ Then, as $r\to \infty,$  the left hand side of \eqref{eq:Claim_DiscLap_Leib} converges to $(-\dLZ)^{z}(B_{\varphi}(f, g))$ in $\ell^2(\ZZ),$ while the right side of \eqref{eq:Claim_DiscLap_Leib} converges pointwise to $$\iint_{\TT^2} \varphi(2|\sin \pi\xi_1|,2|\sin \pi\xi_2|)\,|2\sin\pi(\xi_1+\xi_2)|^{2z} e^{2\pi i (\xi_1+\xi_2)n}\, \mFZ(f)(\xi_1)\mFZ(g)(\xi_2)\,d\xi.$$ Therefore \eqref{eq:Claim_DiscLap_Leib} remains valid for $z.$   
	
	Set $\tilde{\varphi}(\xi_1,\xi_2)= \varphi(2|\sin \pi\xi_1|,2|\sin \pi\xi_2|).$ 
	 Since $-\dLZ(e^{2\pi it \cdot})(n)=4(\sin^2 \pi t) e^{2\pi it n},$ for $t\in \TT$ and $n\in \mathbb{Z},$
	 we deduce that $(-\dLZ)^{k}(e^{2\pi it \cdot})(n)=2^{2k}|\sin \pi t|^{2k} e^{2\pi it n},$ $k\in \mathbb{N}.$
	 Hence, for $k,n\in \mathbb{N},$ we have
	 \begin{equation*}
	 (-\dLZ)^{k}(B_{\varphi}(f, g))(n)=\iint_{\TT^2} \tilde{\varphi}(\xi_1,\xi_2)\,(4\sin^2\pi(\xi_1+\xi_2))^k e^{2\pi i (\xi_1+\xi_2)n}\, \mFZ(f)(\xi_1)\mFZ(g)(\xi_2)\,d\xi.
	 \end{equation*}
	 Thus, for $P$ being a polynomial we obtain
	 $$P(-\dLZ)(B_{\varphi}(f, g))(n)=\iint_{\TT^2} \tilde{\varphi}(\xi_1,\xi_2)\, P(4\sin^2\pi(\xi_1+\xi_2)) e^{2\pi i (\xi_1+\xi_2)n}\, \mFZ(f)(\xi_1)\mFZ(g)(\xi_2)\,d\xi,$$
	 where $n\in \ZZ.$
	 
	 Finally, a density argument shows that the above formula remains true for continuous functions in place of polynomials.  In particular, taking $\la\mapsto \la^{z},$ $\Real(z)>0,$ we obtain \eqref{eq:Claim_DiscLap_Leib}. 
\end{proof}

We proceed to the proof of the corollary.
\begin{proof}[Proof of Corollary \ref{cor:LeibDiscLap}]
We claim that it is enough to prove the corollary in dimension $d=1.$ Indeed, fix $s>0$ and assume that \eqref{eq:LeibDiscLap} is true in this case. Let $\dLZ$ be the one dimensional discrete Laplacian on $\mathbb{Z}.$ Define $L_j:=-\dLZ\otimes I_{(j)},$ $j=1,\ldots,d,$ to be the one-dimensional discrete Laplacian acting on the $j$-th variable, so that, clearly, $-\dLZd=\sum_{j=1}^d L_j.$ Since each $L_j$ generates a symmetric contraction semigroup, using e.g.\ the multivariate multiplier theorem \cite[Corollary 3.2]{WroJSMMSO}
we see that the operator $$(\sum L_j)^{s}(\sum L_j^{s})^{-1}$$ is bounded on $L^p,$ $p>1.$ In other words, we have the bound
\begin{equation*}\|(-\dLZd)^{s}(fg)\|_p\lesssim \|\sum_{j=1}^{d} L_j^{s}(fg)\|_{p}\leq \sum_{j=1}^{d}\|L_j^{s}(fg)\|_{p}.\end{equation*}
Since the multiplier $L_j^s (-\dLZd)^{-s}$ is bounded on all $L^p,$ $p>1,$ (this again follows from \cite[Corollary 3.2]{WroJSMMSO}) in order to conclude the proof of our claim it is thus enough to show that
\begin{equation}
\label{eq:LeibDiscLapAux1}
\|L_j^{s}(fg)\|_{p}\lesssim \|L_j^{s}f\|_{p_1}\|g\|_{p_2}+ \|L_j^{s}g\|_{p_2}\|f\|_{p_1},
\end{equation}
for every $j=1,\ldots,d.$

For notational simplicity we justify \eqref{eq:LeibDiscLapAux1} only for $j=1,$ the proofs for other $j$ are analogous. For a sequence $h\colon \mathbb{Z}^d\to \mathbb{C}$ denote $h_{n}(k):=h(k,n),$ $k\in\mathbb{Z},$ $n\in \mathbb{Z}^{d-1}.$ Clearly, we have $(fg)_n(\cdot)=f_n(\cdot)g_n(\cdot).$ Then, using \eqref{eq:LeibDiscLap} in the dimension $d=1$ (first inequality below), together with the simple fact that $(a+b)^{p}\approx a^{p}+b^p$ (second and last inequalities below), and  H\"older's inequality with exponents $p_1/p, p_2/p>1$ (third inequality below) we obtain
\begin{align*}\|L_1^{s}(fg)\|_{p}&=\sum_{n\in \mathbb{Z}^{d-1}} \|L_1^{s}((fg)_n(\cdot))\|_{l^p(\mathbb{Z})}^{p}=\sum_{n\in \mathbb{Z}^{d-1}} \|L_1^{s}(f_n(\cdot)g_n(\cdot))\|_{l^p(\mathbb{Z})}^{p}
\\&\lesssim \sum_{n\in \mathbb{Z}^{d-1}} \big(\|L_1^{s}(f_n)\|_{l^{p_1}(\mathbb{Z})}\|g_n\|_{l^{p_2}(\mathbb{Z})}+\|L_1^{s}(g_n)\|_{l^{p_1}(\mathbb{Z})}\|f_n\|_{l^{p_2}(\mathbb{Z})}\big)^{p}\\
&\lesssim \sum_{n\in \mathbb{Z}^{d-1}} \|L_1^{s}(f_n)\|_{l^{p_1}(\mathbb{Z})}^{p}\|g_n\|_{l^{p_2}(\mathbb{Z})}^{p}+\|L_1^{s}(g_n)\|_{l^{p_1}(\mathbb{Z})}^{p}\|f_n\|_{l^{p_2}(\mathbb{Z})}^{p}\\
&\lesssim
\big(\sum_{n\in \mathbb{Z}^{d-1}} \|L_1^{s}(f_n)\|_{l^{p_1}(\mathbb{Z})}^{p_1}\big)^{p/p_1}\big(\sum_{n\in \mathbb{Z}^{d-1}}\|g_n\|_{l^{p_2}(\mathbb{Z})}^{p_2}\big)^{p/p_2}\\
&+\big(\sum_{n\in \mathbb{Z}^{d-1}} \|L_1^{s}(g_n)\|_{l^{p_2}(\mathbb{Z})}^{p_2}\big)^{p/p_2}\big(\sum_{n\in \mathbb{Z}^{d-1}}\|f_n\|_{l^{p_1}(\mathbb{Z})}^{p_1}\big)^{p/p_1}=\|L_1^s(f)\|_{p_1}^p\|g\|_{p_2}^p+\|L_1^s(g)\|_{p_2}^p\|f\|_{p_1}^p\\
&\lesssim \big(\|L_1^s(f)\|_{p_1}\|g\|_{p_2}+\|L_1^s(g)\|_{p_2}\|f\|_{p_1}\big)^{p}.
\end{align*}
Hence, \eqref{eq:LeibDiscLapAux1} is proved.

Having justified the claim we now focus on proving \eqref{eq:LeibDiscLap} for $d=1.$ Till the end of the proof of the corollary we work on $\mathbb{Z}$ and write $l^p$ and $\|\cdot\|_p$ for $l^p(\mathbb{Z})$ and $\|\cdot\|_{l^p(\mathbb{Z})},$ respectively. 

Let $\eta_0$ and $\eta_{1}$ be smooth functions satisfying $\supp \eta_0 \subseteq [0,1/4],$ $\supp \eta_{1}\subseteq [1/8,10]$ and $\eta_0+\eta_{1}=1$ on $[0,4].$ For a function $h\in \mA$ we set $h_0=\eta_0((-\dLZ)^{1/2})(h)$ and $h_{1}= \eta_{1}((-\dLZ)^{1/2})(h),$ so that $h=h_0+h_{1}.$ From \cite[Theorem 1.1]{Alex_1} it follows that, for each fixed $s>0$ the multiplier $(-\dLZ)^{-s}\eta_{1}(-\dLZ)$ is bounded on all $l^p,$ $1<p<\infty.$ Moreover, $h_0,h_1\in \mA.$ Since $h_{1}=(-\dLZ)^{-s}\eta_{1}(-\dLZ)[(-\dLZ)^{s}(h)]$, we thus have the estimate
$$\|h_{1}\|_{p}\lesssim \|(-\dLZ)^{s}h\|_{p}.$$
Hence, using the boundedness of $(-\dLZ)^{s}$ and H\"older's inequality we obtain
\begin{align*}
\|(-\dLZ)^{s}(f_{i_1}g_{i_2})\|_{p}\lesssim \|f_{i_1}\|_{p_1}\|g_{i_2}\|_{p_2}\lesssim \|(-\dLZ)^{s}f\|_{p_1}\|g\|_{p_2}+ \|(-\dLZ)^{s}g\|_{p_1}\|f\|_{p_2},
\end{align*}
for $i_1, i_2\in \{0,1\}$ not both equal to $0.$ In summary, to finish the proof it is enough to demonstrate that
\begin{equation*}
%\label{eq:discLapZestreduct}
\|(-\dLZ)^{s}(f_{0}g_{0})\|_{p}\lesssim \|(-\dLZ)^{s}f\|_{p_1}\|g\|_{p_2}+ \|(-\dLZ)^{s}g\|_{p_1}\|f\|_{p_2}.
\end{equation*}

 Clearly, $\mFZ(f_0)(x)=\eta_0(|\sin \pi x|)\mFZ(f)(x)$ and $\mFZ(g)(y)=\eta_0(|\sin \pi y|)\mFZ(g_0)(y).$ Hence, denoting $I=[0,1/2)$ and using Lemma \ref{lem:DiscFracPF} together with \eqref{eq:trigform1} we now write
\begin{align*}
&(-\dLZ)^s(f_0g_0)(n)\\
&=2^{2s}\int_\TT \int_\TT [|\sin\pi \xi_1\cos\pi \xi_2+\sin \pi \xi_1\cos \pi \xi_2|^{2s}\eta_0(|\sin \pi \xi_1|)\eta_0(|\sin \pi \xi_2|)]\\ &\hspace{1.6cm}\times e^{2\pi i (\xi_1+\xi_2)n}\,\mFZ(f)(\xi_1)\mFZ(g)(\xi_2)\,d\xi\\
&=2^{2s}\sum_{\epsilon\in\{-1,1\}^2}
\int_{\epsilon_1 I}\int_{\epsilon_2 I}|\sin\pi \xi_1\sqrt{1-\sin^2 \pi \xi_2}+\sin\pi \xi_2 \sqrt{1-\sin^2 \pi \xi_1}|^{2s}\eta_0(|\sin \pi \xi_1|)\eta_0(|\sin \pi \xi_2|)\\
&\hspace{1.6cm}\times e^{2\pi i (\xi_1+\xi_2)n}\,\mFZ(f)(\xi_1)\mFZ(g)(\xi_2)\,d\xi :=\sum_{\epsilon\in\{-1,1\}^2} T_{\epsilon}(f,g)(n),\qquad n\in \ZZ.
\end{align*}
Thus, in order to finish the proof it is enough to show that, for $\epsilon\in\{-1,1\}^2$ it holds
\begin{equation}
\label{eq:discLapTjest}
\|T_{\epsilon}(f,g)\|_{p}\lesssim \|(-\dLZ)^{s}f\|_{p_1}\|g\|_{p_2}+ \|(-\dLZ)^{s}g\|_{p_2}\|f\|_{p_1}.
\end{equation}
It is enough to justify \eqref{eq:discLapTjest} only for $T_{1,1}$ and $T_{1,-1}$ as the proofs for $T_{-1,1}$ and $T_{-1,-1}$ are symmetric.
In what follows we let $\phi$ be a function in $C^{\infty}([0,\infty))$ supported in $[0,1/4]$ and such that $\phi(t)+\phi(t^{-1})=1.$ Note that then $\phi(\la_2/\la_1)$ satisfies H\"ormander's condition \eqref{eq:Hormcond} of arbitrary order.

Let $(\eta_0^{\otimes})(\la)=\eta_0(\la_1)\eta_0(\la_2),$ $\la \in[0,\infty)^2.$ To justify \eqref{eq:discLapTjest} for $T_{1,1}$ we set
\begin{align*}
m_{1,1}^s(\la)&=\frac{|\la_1(1-\la_2^2/4)^{1/2}+\la_2(1-\la_1^2/4)^{1/2}|^{2s}}{\la_1^{2s} }\phi(\la_2/\la_1)(\eta_0^{\otimes})(\la),\\
\tilde{m}_{1,1}^s(\la)&=\frac{|\la_1(1-\la_2^2/4)^{1/2}+\la_2(1-\la_1^2/4)^{1/2}|^{2s}}{\la_2^{2s} }\phi(\la_1/\la_2)(\eta_0^{\otimes})(\la). \end{align*}
Then, using \eqref{eq:discLapMultDef} (in the case $d=1$) we rewrite $T_{1,1}$ as
\begin{equation*}
%\label{eq:discLapT3decom}
T_{1,1}(f,g)=B_{m_{1,1}^s}(H(-\dLZ)^s f,Hg)+B_{\tilde{m}_{1,1}^s}( Hf,H(-\dLZ)^sg).
\end{equation*}
In view of Lemma \ref{lem:discLapSpecMult}, to demonstrate \eqref{eq:discLapTjest} it suffices to show that
\begin{equation*}
%\label{eq:discLapT4decom}
\|B_{m_{1,1}^s}(f,g)\|_{p}+\|B_{\tilde{m}_{1,1}^s}(f,g)\|_{p}\leq C \|f\|_{p_1}\|g\|_{p_2}.\end{equation*}
This, however, follows directly from Theorem \ref{thm:discLap_Coif-Mey}, since, for each $s>0,$ the multipliers $m_{1,1}^{s},$ and $\tilde{m}_{1,1}^{s},$ satisfy H\"ormander's condition \eqref{eq:Hormcond} of arbitrary order.

Finally, we prove \eqref{eq:discLapTjest} for $T_{1,-1}.$
For $\Real(z)\geq 0$ we set
\begin{align*}
m_{1,-1}^{z}(\la)&=\frac{|\la_1(1-\la_2^2/4)^{1/2}-\la_2(1-\la_1^2/4)^{1/2}|^{2z}}{\la_1^{2z} }\phi(\la_2/\la_1)(\eta_0^{\otimes})(\la),\\
\tilde{m}_{1,-1}^{z}(\la)&=\frac{|\la_1(1-\la_2^2/4)^{1/2}-\la_2(1-\la_1^2/4)^{1/2}|^{2z}}{\la_2^{2z} }\phi(\la_1/\la_2)(\eta_0^{\otimes})(\la). \end{align*}
Then using \eqref{eq:discLapMultDef} (in the case $d=1$) we rewrite $T_{1,-1}$ as
\begin{equation*}
%\label{eq:discLapT3decom}
T_{1,-1}(f,g)=B_{m_{1,-1}^{s}}(H(-\dLZ)^s f,(I-H)g)+B_{\tilde{m}_{1,-1}^{s}}( Hf,(I-H)(-\dLZ)^sg).
\end{equation*}
Note that $\mA$ is preserved by $(-\dLZ)^{s}.$ Thus, by Lemma \ref{lem:discLapSpecMult}, to demonstrate \eqref{eq:discLapTjest} it is enough to prove, for $f,g\in \mA,$  the bounds
\begin{equation}
\label{eq:discLapT4decom}
\begin{split}
\|B_{m_{1,-1}^{s}}(H f,(I-H)g)\|_{p}&\leq C \| H f\|_{p_1}\|(I-H)g\|_{p_2},\\
\|B_{\tilde{m}_{1,-1}^{s}}(H f,(I-H)g)\|_{p}&\leq C \|Hf\|_{p_1}\| (I-H)g\|_{p_2}.
\end{split}\end{equation}
We focus only on the first estimate, the reasoning for the second being analogous. We are going to apply Stein's complex interpolation theorem \cite{St1} for each fixed $f\in \mA$. The argument used here takes ideas from the proof of \cite[Theorem 1.4]{GuiKon1}. For further reference we note that the formula
\begin{equation}
\label{eq:BmzdiscLap}
\begin{split}
&B_{m_{1,-1}^z}(H f,(I-H)g)(n)
=\int_0^{1/2}\int_{-1/2}^{0}\phi\bigg(\frac{|\sin \pi \xi_2|}{|\sin \pi \xi_1|}\bigg)\eta_0(|\sin \pi \xi_1|)\eta_0(|\sin \pi \xi_2|)\\
&\times\frac{|\sin\pi \xi_1\sqrt{1-\sin^2 \pi \xi_2}-\sin\pi \xi_2\sqrt{1-\sin^2 \pi \xi_1}|^{2z}}{|\sin \pi \xi_1|^{2z}} \,e^{2\pi i (\xi_1+\xi_2)n}\,\mFZ(f)(\xi_1)\mFZ(g)(\xi_2)\,d\xi;
\end{split}\end{equation}
makes sense not only for $f,g\in\mA$ but more generally, for $f, g\in \ell^2.$ 

 Let $n$ be an even integer larger than $8.$ Then the multipliers $m_{1,-1}^{n+iv},$ $v\in \bR,$ satisfy the Mikhlin-H\"ormander condition \eqref{eq:Hormcond} of order $8$. Thus, Theorem \ref{thm:discLap_Coif-Mey} (with $d=1$) gives
\begin{equation*}
\|B_{m_{1,-1}^{n+iv}}( H f,(I-H)g)\|_{p}\leq C(1+|v|)^{8} \|Hf\|_{p_1}\|(I-H)g\|_{p_2},\qquad v\in \bR.
\end{equation*}
Now, Lemma \ref{lem:DiscFracPF} applied to $\varphi(\la)=\phi(\la_2/\la_1)\eta_0^{\otimes}(\la),$ $\la\in (0,\infty)^2,$ implies
$$B_{m_{1,-1}^{iv}}(H f,(I-H)g)=(-\dLZ)^{iv}\big[B_{\phi(\la_2/\la_1)\eta_0^{\otimes}}(H (-\dLZ)^{-iv}f,(I-H)g)\big].$$ By \cite[Theorem 1.1]{Alex_1} we have $\|(-\dLZ)^{iv}\|_{\ell^q\to\ell^q}\leq C_q (1+|v|)^4,$ $1<q<\infty.$ Hence, Theorem \ref{thm:discLap_Coif-Mey} applied to the multiplier $\phi(\la_1/\la_2)\eta_0^{\otimes}$ produces
\begin{equation*}
\|B_{m_{1,-1}^{iv}}(H f,(I-H)g)\|_{p}\leq C(1+|v|)^{8} \|H f\|_{p_1}\|(I-H)g\|_{p_2},\qquad v\in \bR.
\end{equation*}

By \eqref{eq:BmzdiscLap}, for fixed $f\in \mA,$ the family $\{B_{m_{1,-1}^z}(H f,(I-H)g)\}_{\Real(z)>0}$ consists of analytic operators. This family has admissible growth, more precisely, for each finitely supported $g,h$ we have
$$\big|\langle B_{m_{1,-1}^z}(H f,(I-H)g),h\rangle_{l^2(\bZ)}\big|\leq C_{f,g,h},\qquad |\Real(z)|\leq s.$$
  Consequently, an application of Stein's complex interpolation theorem is permitted and leads to the first inequality in \eqref{eq:discLapT4decom}. The proof of the corollary is thus finished.
\end{proof}

\section{Bilinear radial multipliers for the generic Dunkl transform}
\label{sec:Dun}

Here we apply Theorem \ref{thm:genCoif-Mey} for bilinear multiplier operators associated with the generic Dunkl transform. In the case when the underlying group of reflections is isomorphic to $\bZ_2$ we also prove a fractional Leibniz rule.

Let $R$ be a root system in $\mathbb{R}^d$ and $G$ the associated reflection group (see \cite[Chapter 2]{Ros}). Let $\sigma_{\alpha}(x)$ denote the reflection of $x$ in the hyper-plane orthogonal to $\alpha\in \bR^d$ and let $\ka$ be a nonnegative, $G$ invariant function on $R.$ The differential-difference (rational) Dunkl operators, are defined as
\begin{equation} \nonumber \Do_j f(x)=\partial_j f(x)+ \sum_{\alpha\in R_+}\alpha_j \ka(\alpha)\frac{f(x)-f(\sigma_{\alpha}(x))}{\langle\alpha,x\rangle},\qquad j=1,\ldots,d.\end{equation}
 Here $f$ is a Schwartz function, $R_+$ is a fixed positive subsystem of $R$ and $\langle x,y\rangle=\sum_{j=1}^d x_j y_j$ is the standard inner product. The fundamental property of the operators $\Do_j$ is that, similarly to the usual partial derivatives (which appear when we take $\ka\equiv0$), they commute, i.e. $\Do_l \Do_j=\Do_j \Do_l,$ $l,j=1,\ldots,d.$ The operators $\Do_j$ are also symmetric on $L^2=L^2(\mathbb{R}^d,w(x)dx),$ with $w(x)=w_{\ka}(x):=\prod_{i=1}^d\left|\langle\alpha,x\rangle\right|^{2\ka(\alpha)}.$ Moreover they leave $\mS(\RR^d)$ invariant. Additionally  the Leibniz rule
\begin{equation}
\label{eq:DunkLeib}\Do_j(f_1f_2)(x)=\Do_j(f_1)(x)f_2(x)+\Do_j(f_1)(x)f_2(x),\qquad x\in \Rd,\end{equation}
holds under the extra assumption that one of the functions $f_1,f_2$ is invariant under $G.$

The easiest case of Dunkl operators arrises when $G\sim \ZZ_2^d.$ In other words $G$ consists of reflections through the coordinate axes. In this case
\begin{equation} \nonumber \Do_j f(x)=\partial_j f(x)+  \ka_j\frac{f(x)-f(\sigma_{j}(x))}{x_j},\qquad j=1,\ldots,d,\end{equation}
where $\kappa_j\ge 0,$ while $\sigma_{j}(x)$ denotes the reflection of $x$ in the hyperplane orthogonal to the $j$-th coordinate vector. In this case the weight $w_{\kappa}(x)$ takes the product form $w_{\kappa}(x)=\prod_{j=1}^{d}w_{\kappa_j}(x_j),$ $x\in \RR^d.$

In the (general) Dunkl setting there is an analogue of the Fourier transform, called the Dunkl transform. It is defined by $$\mD f(\xi)=c_{\ka} \int_{\mathbb{R}^d}E(-i\xi,x)f(x)w_{\ka}(x)\,dx$$
 where $E(z,w)=E_{\ka}(z,w)=E_{\ka}(w,z)$ is the so called Dunkl kernel. A defining property of this kernel is the equation
 \begin{equation}\label{eq:DunkkerEq}\Do_{j,x}(E_{\ka}(i\xi,x))=i\xi_j E_{\ka}(i\xi,x),\qquad x\in \RR^d.\end{equation}
The operator $\mD$ has properties similar to the Fourier transform. Namely, we have the Plancherel formula
\begin{equation}
\label{eq:DunklPlan}
\int_{\RR^d} f(x)g(x)\,w(x)\,dx= c_{\kappa}\,\int_{\RR^d}\mD(f)(\xi)\mD(h)(\xi)\, w(\xi)\, d\xi,
\end{equation}
and the inversion formula,
\begin{equation}
 \label{eq:DunklInv}
f(x)=\mD^2 f(-x)=c\int_{\Rd} \mD(f)(\xi)E(i\xi,x)\,w(\xi)\,d\xi,\qquad f\in \mS(\Rd).
\end{equation}

Additionally, the Dunkl transform diagonalizes simultaneously the Dunkl operators $\Do_i,$ i.e. \begin{equation}
\label{eq:Dunkldiagon}\Do_j \mD f=-\mD(i x_j f),\qquad \mD \Do_j f =i \xi_j \mD.\end{equation}

 The Dunkl Laplacian is given by $\Delta_{\ka}=\sum_{i=1}^d \Do_i^2.$ Using the identity
$$
\mD(\Delta_{\ka} f)(\xi)=-|\xi|^2\,\mD(f)(\xi),\qquad \xi \in \Rd, $$
the  operator $-\Delta_{\ka}$ may be formally defined as a non-negative self-adjoint operator on $L^2(\RR^d,w).$ The same is true for $L:=(-\Delta_{\ka})^{1/2}.$ Then, for a bounded function $\mu$ the spectral multiplier $\mu(L)$ is uniquely determined on $\mS(\Rd)$ by
\begin{equation}
 \label{eq:DunSpecMult}
\mD(\mu(L)f)(\xi)=\mu(|\xi|)\mD(f)(\xi) \qquad \xi \in \Rd.
\end{equation}

Consider now $L_1:=L\otimes I$ and $L_2=I\otimes L.$ Analogously to the case of bilinear Fourier multipliers the formula \eqref{eq:genmult} can given by the Dunkl transform. Namely, for a bounded function $m\colon [0,\infty)^2\to \bC$ we have
\begin{equation}\label{eq:defDunklBM}
\begin{split}
&B_m(f_1,f_2)(x)\\
&=\int_{\Rd}\int_{\Rd}\,m(|\xi_1|,|\xi_2|)\,\mD(f)(\xi_1)\,\mD(g)(\xi_2)\,E(i\xi_1,x)E(i\xi_2,x)\,w(\xi_1)w(\xi_2)d\xi_1d\xi_2.
\end{split}
\end{equation}
The above formula is valid pointwise e.g. for Schwartz functions $f_1$ and $f_2$ on $\bR^d.$ We observe that in this section the space $\mA_2$ from \eqref{eq:Aspaces} is
\begin{equation}
\label{eq:DunklA2}
\mA_2=\{g\in L^2(\RR^d,w_{\kappa})\colon \textrm{there is $\varepsilon>0$ such that } \mD(g)(\xi)=0\textrm{ for $|\xi|\not \in [\varepsilon,\varepsilon^{-1}]$}\}.\end{equation}
Thus, by \eqref{eq:Dunkldiagon} the Dunkl derivatives $\delta_j,$ $j=1,\ldots,d,$ preserve $\mA_2.$ 

In this section we will heavily rely on the concepts of Dunkl translation and Dunkl convolution. For $x,y \in \Rd$ The Dunkl translation is defined by
$$\tau^yf(x)=c_{\kappa}\int_{\Rd}\mD(f)(\xi)E(i\xi,x)E(i\xi,y)\,w(\xi)\,d\xi.$$
The inversion formula \eqref{eq:DunklInv} and the properties of the Dunkl kernel imply
$$\mD(\tau^yf)(\xi)=E(-i\xi,y)\mD(f)(\xi).$$ For $f,g\in \mA$ the Dunkl convolution is
$$f\dc g(x)=\int_{\Rd}f(y)\,\tau_{x}\check{g}(y)\,w(y)\,dy,$$
where $\check{g}(x)=g(-x).$ It is known that the Dunkl transform turns this convolution into multiplication, i.e.
\begin{equation}
\label{eq:DunConvFor}
\mD(f \dc g)(x)=\mD(f)(x)\,\mD(g)(x),\qquad [\mD(f) \dc \mD(g)](x)=\mD(f g)(x),\qquad f,g\in \mA.
\end{equation}

The first result of this section is the following Coifman-Meyer type theorem. In what follows we set $\la_{\ka}=(d-1)/2+\sum_{\alpha\in R^+}\ka(\alpha)$ and for brevity write $L^p:=L^p(\Rd,w_{\ka})$ and $\|\cdot\|_{p}=\|\cdot\|_{L^p}.$
\begin{thm}
\label{thm:DunklLap_Coif-Mey} Assume that $m$ satisfies the Mikhlin-H\"ormander condition \eqref{eq:Hormcond} of an order $s> 2\lambda_{\ka}+6$. Then the bilinear multiplier operator given by \eqref{eq:defDunklBM} is bounded from $L^{p_1}\times L^{p_2}$ to $L^p,$ where $1/p_1+1/p_2=1/p,$ and $p_1,p_2,p>1.$ Moreover, the bound \eqref{eq:thm_gen_bound} holds.
\end{thm}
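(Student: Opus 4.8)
The plan is to deduce Theorem \ref{thm:DunklLap_Coif-Mey} from the abstract Theorem \ref{thm:genCoif-Mey} applied to $L=(-\Delta_{\ka})^{1/2}$ on $X=\RR^d$ with measure $w_{\ka}(x)\,dx$. Thus the whole task reduces to verifying, one by one, that this operator satisfies the five standing hypotheses of Theorem \ref{thm:genCoif-Mey}: injectivity on its domain, the contractivity \eqref{eq:Con}, the Mikhlin-H\"ormander calculus \eqref{eq:MH}, the well-definiteness \eqref{eq:WD}, and the product formula \eqref{eq:PF}. Once these are in place, the conclusion and the quantitative bound \eqref{eq:thm_gen_bound} follow immediately, provided the order $s$ in \eqref{eq:Hormcond} exceeds the threshold $2\rho+4$, where $\rho$ is the order of the Mikhlin-H\"ormander calculus for the Dunkl Laplacian. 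The appearance of $2\la_{\ka}+6$ in the hypothesis strongly suggests that $\rho=\la_{\ka}+1$ here, coming from the homogeneous dimension $2\la_{\ka}+2$ of the Dunkl setting, so that $2\rho+4=2\la_{\ka}+6$ matches exactly.

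First I would dispatch the easy hypotheses. Injectivity of $L$ is clear from \eqref{eq:DunSpecMult}, since $\mD$ is a bijection and $\mu(|\xi|)$ with $\mu(\la)=\la$ vanishes only at $\xi=0$, a null set. Contractivity \eqref{eq:Con} is the statement that the Dunkl heat semigroup $e^{-t(-\Delta_{\ka})^{1/2}}$ is a symmetric contraction on all $L^p(w_{\ka})$; this is known from the literature on the Dunkl heat kernel (the Dunkl heat semigroup is Markovian and positivity-preserving, and subordination gives the Poisson-type semigroup generated by $L$). The calculus \eqref{eq:MH} is the hard analytic input but is available off the shelf: spectral multiplier theorems of Mikhlin-H\"ormander type for the Dunkl Laplacian are known, yielding $L^p$-boundedness for symbols satisfying \eqref{eq:Mh} with $\rho$ of the order of half the homogeneous dimension, i.e. $\rho\approx\la_{\ka}+1$. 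I would cite the relevant result and simply record the value of $\rho$.

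The two genuinely structural conditions, \eqref{eq:WD} and \eqref{eq:PF}, are where the Dunkl transform machinery assembled in the section does the work. For \eqref{eq:WD}, I would use \eqref{eq:defDunklBM}: when $f_1,f_2\in\mA$ their Dunkl transforms are compactly supported away from the origin (see \eqref{eq:DunklA2}), so for bounded $m$ the integrand $m(|\xi_1|,|\xi_2|)\mD(f_1)(\xi_1)\mD(f_2)(\xi_2)$ lies in $L^1(w_{\ka}\otimes w_{\ka})$; since the Dunkl kernel satisfies $|E(i\xi,x)|\le 1$, dominated convergence shows $m(L_1,L_2)(f_1\otimes f_2)$ is continuous and bounded by $C_{f_1,f_2}\|m\|_{\infty}$, with $C_{f_1,f_2}=\|\mD(f_1)\|_{L^1(w)}\|\mD(f_2)\|_{L^1(w)}$ up to constants. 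For \eqref{eq:PF}, the key is the support behaviour of the Dunkl convolution: by \eqref{eq:DunConvFor} the Dunkl transform of $\varphi_k(L)f_1\cdot\psi_k(L)f_2$ is the Dunkl convolution $\mD(\varphi_k(L)f_1)\dc\mD(\psi_k(L)f_2)$, and I would argue that, because $\varphi_k\circ|\cdot|$ is supported in the ball of radius $2^{k-b}$ and $\psi_k\circ|\cdot|$ in the annulus $2^{k-2}\le|\xi|\le 2^{k+2}$, the Dunkl convolution is spectrally supported (in the variable $|\xi|$) within $[2^{k-3-b},2^{k+3+b}]$ once $b$ is large enough; applying $\tpsi_k(L)$, which equals $1$ there, then acts as the identity. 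The main obstacle is precisely this support-propagation step: unlike the ordinary convolution on $\RR^d$, the Dunkl convolution need not obey the naive inclusion $\supp(f\dc g)\subseteq\supp f+\supp g$, so I would need a result guaranteeing that the radial/spectral support of a Dunkl product behaves additively in the frequency $|\xi|$. I expect this to hinge on the finite-propagation-speed property of the Dunkl wave equation (equivalently, support localization for the Dunkl translation $\tau^y$), which controls how the support of $\tau^y$ spreads and thereby forces the frequency support of the product to lie in the predicted band; choosing $b$ comfortably larger than the constant in that localization estimate then closes the argument.
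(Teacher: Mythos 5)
Your proposal follows essentially the same route as the paper: reduce to Theorem \ref{thm:genCoif-Mey}, dispatch injectivity, \eqref{eq:Con}, \eqref{eq:WD} and \eqref{eq:MH} exactly as you describe (the calculus comes from Dai--Wang with any $\rho>\la_{\ka}+1$, which is why the threshold is $2\la_{\ka}+6$), and prove \eqref{eq:PF} via the Dunkl convolution structure. The support-propagation result you correctly identify as the crux is precisely the theorem of Amri, Anker and Sifi that the Dunkl translation distribution $\gamma_{\xi,y}$ is supported in the spherical shell $\{z\colon ||\xi|-|y||\le|z|\le|\xi|+|y|\}$, which the paper uses (with $b=2$) to show the product of the two frequency-localized pieces stays in the expected spectral band.
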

\begin{proof}
 We are going to apply Theorem \ref{thm:genCoif-Mey}. In order to do so we need to check that its assumptions are satisfied  for the operator $L=(-\Delta_{\ka})^{1/2}$. To see that $L$ is injective on its domain we merely note that $w_{\kappa}(\xi)\,d\xi$ is absolutely continuous with respect to Lebesgue measure. The contractivity condition \eqref{eq:Con} follows from \cite[Theorem 4.8]{Ros} and the subordination method. The assumption \eqref{eq:WD} is straightforward from \eqref{eq:defDunklBM} and the Lebesgue dominated convergence theorem, while \eqref{eq:MH} was proved by Dai and Wang \cite[Theorem 4.1]{DaiWan1} (with arbitrary $\rho>\la_{\ka}+1$).

Thus we are left with verifying the property \eqref{eq:PF}, which we prove with $b=2.$ This will be deduced by using the convolution structure associated with Dunkl operators. Let $\varphi_k$ and $\psi_k,$ be smooth functions such that $\supp\varphi_k\subseteq [0,2^{k-2}]$ and $\supp\psi_k\subseteq [2^{k-1},2^{k+1}].$ Let $\tpsi_k$ be a smooth function equal $1$ on $[2^{k-5},2^{k+5}]$ and vanishing outside of $[2^{k-7},2^{k+7}].$ Taking the Dunkl transform of the both sides of \eqref{eq:PF}  and using \eqref{eq:DunSpecMult} we see that our task is equivalent to proving the formula
$$\mD(\varphi_k(L)(f_1)\psi_k(L)(f_2))=\tpsi_k(|\xi|)\mD(\varphi_k(L)(f_1)\psi_k(L)(f_2)),\qquad \xi \in \Rd.$$
Denote $g_j=\mD(f_j),$ $j=1,2.$ By  \eqref{eq:DunConvFor} and \eqref{eq:DunSpecMult} the equation above is exactly 
$$[(\varphi_k(|\cdot|)g_1)\dc(\psi_k(|\cdot|)g_2)](\xi)=\tpsi_k(|\xi|)[(\varphi_k(|\cdot|)g_1)\dc(\psi_k(|\cdot|)g_2)](\xi),\qquad \xi \in \Rd.$$
By definition of $\tpsi$ to prove the last formula it is enough to show that
\begin{equation}
\label{eq:suppPropDunkl}
 \supp [h_1\dc h_2]\subseteq [2^{k-5},2^{k+5}],
\end{equation}
for any functions $h_1$ supported in $B(0,2^{k-2})$ and $h_2$ supported in $B(0,2^{k+1})\setminus B(0,2^{k-1}).$
Take $|\xi| \not\in [2^{k-5},2^{k+5}]$ and $y\in B(0,2^{k-2}).$ We claim that $\tau^{\xi}\check{h_2}(y)=0.$ This implies \eqref{eq:suppPropDunkl}.

Till the end of the proof we thus focus on proving the claim. Let $\gamma_{\xi,y}$ be the distribution given by $\gamma_{\xi,y}(f)=(\tau^{\xi}f)(y),$ $f\in \mS(\Rd).$ In \cite[Theorem 5.1]{AmAnSif1} Amri, Anker, and Sifi proved that $\gamma_{\xi,y}$ is supported in the spherical shell
$$S_{\xi,y}:=\big\{z\in \Rd\colon ||\xi|-|y||\leq |z|\leq |\xi|+|y|\big\}.$$
Therefore, if we prove that $\supp h_2\cap S_{\xi,y}=\emptyset,$ then  $\tau^{\xi}h_2(y)=0.$
Recall that we have $|\xi| \not\in [2^{k-5},2^{k+5}]$ and $y\in B(0,2^{k-2}).$ Take $z\in S_{\xi,y}$ and consider two possibilities, either $|\xi|<2^{k-5}$ or $|\xi|>2^{k+5}.$ In the first case we obtain
$|z|\leq 2^{k-5}+2^{k-2}<2^{k-1},$ while in the second $|z|\geq |\xi|-|y|\geq 2^{k+5}-2^{k-2}> 2^{k+1}.$ Thus, in both the cases $z\not \in \supp h_2,$ and the proof of \eqref{eq:PF} is completed.

\end{proof}

Theorem \ref{thm:DunklLap_Coif-Mey} is quite far from a general bilinear Dunkl multiplier theorem, i.e. when the multiplier function $m$ is not necessarily radial in each of its variables. However, in the case $d=1$ (and $G\sim \bZ_2$), Theorem \ref{thm:DunklLap_Coif-Mey} implies \cite[Theorem 4.1]{AmGasSif1} by Amri, Gasmi, and Sifi. We slightly abuse the notation and, for $\varphi \colon \bR^2 \to \bC,$ $f_1,f_2\in \mA,$ and $x\in \mathbb{R},$ define
\begin{equation}
\label{eq:defDunklBMext}
B_\varphi(f_1,f_2)(x)=\int_{\RR}\int_{\RR}\,\varphi(\xi)\,\mD(f_1)(\xi_1)\,\mD(f_2)(\xi_2)\,E(i\xi_1,x)E(i\xi_2,x)\,w(\xi_1)w(\xi_2)\,d\xi.
\end{equation}
This will cause no confusion with \eqref{eq:defDunklBM}, as till the end of the present section we only use $B_{\varphi}$ given by \eqref{eq:defDunklBMext}.
\begin{cor}[Theorem 4.1 of \cite{AmGasSif1}]
\label{cor:AmGasSif}
Let $G\sim \bZ_2.$ Assume that $\varphi \colon \bR^2 \to \bC$ satisfies the Mikhlin-H\"ormander condition on $\RR^2$ of an order $s>2\la_{\ka}+6,$  namely
\begin{equation}
 \label{eq:Hormcond2}
\|\varphi\|_{MH(\RR^2,s)}:=\sup_{|\alpha|\leq s}\,\sup_{\xi\in \bR^2}|\xi|^{|\alpha|}|\partial^{\alpha} \varphi(\xi_1,\xi_2)|<\infty.
\end{equation}
Then the bilinear multiplier operator given by \eqref{eq:defDunklBMext} is bounded from $L^{p_1}\times L^{p_2}$ to $L^p,$ where $1/p_1+1/p_2=1/p,$ and $p_1,p_2,p>1.$
\end{cor}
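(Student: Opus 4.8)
The plan is to deduce the corollary from Theorem~\ref{thm:DunklLap_Coif-Mey} by decomposing the frequency plane $\bR^2$ into the four quadrants cut out by the signs of $\xi_1$ and $\xi_2.$ On each quadrant a general multiplier $\varphi$ restricts to a function that is radial in each variable, which is precisely the class handled by Theorem~\ref{thm:DunklLap_Coif-Mey}; the passage between quadrants is mediated by frequency-sign projections that I must show are bounded on $L^p.$

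Since $d=1$ and $G\sim\bZ_2,$ I first introduce the projections $P_+,P_-$ onto the positive and negative Dunkl frequencies, determined on the transform side by $\mD(P_{\pm}f)(\xi)=\mathbf{1}_{\{\pm\xi>0\}}(\xi)\,\mD(f)(\xi).$ Let $R=\Do_1(-\Delta_{\ka})^{-1/2}$ be the one-dimensional Dunkl--Riesz transform; by \eqref{eq:Dunkldiagon} its symbol is $i\,\sgn\xi,$ so $P_{\pm}=\tfrac12(I\mp iR).$ The $L^p$-boundedness of $R$ for $1<p<\infty$ is known in the Dunkl setting, whence $P_+$ and $P_-$ are bounded on every $L^p(\bR,w_{\ka}),$ $1<p<\infty.$ Moreover $P_{\pm}$ preserve $\mA_2$ (see \eqref{eq:DunklA2}): if $\mD(f)$ is supported in $\{|\xi|\in[\e,\e^{-1}]\},$ then so is $\mathbf{1}_{\{\pm\xi>0\}}\mD(f),$ and together with the $L^p$-boundedness this shows that $P_{\pm}$ preserve $\mA.$

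Next, for $\epsilon=(\epsilon_1,\epsilon_2)\in\{-1,1\}^2$ I set $m_{\epsilon}(s_1,s_2)=\varphi(\epsilon_1 s_1,\epsilon_2 s_2),$ $s_1,s_2>0.$ Because $|s|=|(\epsilon_1 s_1,\epsilon_2 s_2)|$ and $\partial^{\alpha}m_{\epsilon}(s)=\epsilon_1^{\alpha_1}\epsilon_2^{\alpha_2}\,(\partial^{\alpha}\varphi)(\epsilon_1 s_1,\epsilon_2 s_2),$ each $m_{\epsilon}$ satisfies the H\"ormander condition \eqref{eq:Hormcond} of order $s$ on $(0,\infty)^2$ with $\|m_{\epsilon}\|_{MH(s)}\le\|\varphi\|_{MH(\bR^2,s)}.$ Inserting the partition $1=\sum_{\epsilon_1}\mathbf{1}_{\{\epsilon_1\xi_1>0\}}\sum_{\epsilon_2}\mathbf{1}_{\{\epsilon_2\xi_2>0\}}$ into the absolutely convergent integral \eqref{eq:defDunklBMext} (legitimate for $f_1,f_2\in\mA,$ whose Dunkl transforms vanish near the origin), and noting that on the quadrant $\{\epsilon_1\xi_1>0,\ \epsilon_2\xi_2>0\}$ one has $\varphi(\xi_1,\xi_2)=m_{\epsilon}(|\xi_1|,|\xi_2|),$ I obtain
$$B_{\varphi}(f_1,f_2)=\sum_{\epsilon\in\{-1,1\}^2}B_{m_{\epsilon}}\big(P_{\epsilon_1}f_1,\,P_{\epsilon_2}f_2\big),$$
where $B_{m_{\epsilon}}$ is the bi-radial multiplier operator \eqref{eq:defDunklBM} attached to $m_{\epsilon}.$

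Finally, since $s>2\la_{\ka}+6$ and $P_{\epsilon_1}f_1,P_{\epsilon_2}f_2\in\mA,$ Theorem~\ref{thm:DunklLap_Coif-Mey} applies to each summand, giving $\|B_{m_{\epsilon}}(P_{\epsilon_1}f_1,P_{\epsilon_2}f_2)\|_p\lesssim\|P_{\epsilon_1}f_1\|_{p_1}\|P_{\epsilon_2}f_2\|_{p_2};$ summing over the four $\epsilon$ and invoking the $L^{p_j}$-boundedness of $P_{\pm}$ yields $\|B_{\varphi}(f_1,f_2)\|_p\lesssim\|f_1\|_{p_1}\|f_2\|_{p_2}.$ The main obstacle is the $L^p$-boundedness of the frequency-sign projections $P_{\pm},$ equivalently of the Dunkl--Riesz transform $R;$ this is the only ingredient not furnished by the present excerpt and must be imported from the Dunkl harmonic analysis literature. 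The remaining points — invariance of the H\"ormander class under the reflections $s\mapsto\epsilon s,$ the validity of the quadrant splitting, the preservation of $\mA,$ and the final summation — are routine.
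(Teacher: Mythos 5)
Your proposal is correct and follows essentially the same route as the paper: the paper's own (two-line) proof sketch says precisely to use the projection onto the positive Dunkl frequencies and deduce the result from its $L^p$-boundedness, which is exactly the quadrant decomposition you carry out in detail. Your elaboration --- checking that each reflected multiplier $m_{\epsilon}$ stays in the H\"ormander class, that $P_{\pm}$ preserve $\mA$, and that the quadrant splitting reproduces $B_{m_{\epsilon}}(P_{\epsilon_1}f_1,P_{\epsilon_2}f_2)$ --- is sound, and the one imported ingredient (the $L^p$-boundedness of the Dunkl frequency-sign projections) is the same one the paper relies on.
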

\begin{remark}
	When $\kappa=0$ we recover the Coifman-Meyer multiplier theorem in the Fourier transform setting. 
\end{remark}
\begin{proof}[Proof of Corollary \ref{cor:AmGasSif} (sketch)]
Let $\mP(f)(x)=\mD^{-1}(\chi_{\xi>0})\mD(f)(\xi))(x)$ be the projection onto the positive Dunkl frequencies. The corollary  can be deduced from the boundedness of $\mP$ on all $L^p$ spaces $1<p<\infty.$ 
\end{proof}

For $\Real z\ge0,$ let  $(-\Delta_{\ka})^{z}$ be the complex Dunkl derivative
$$\mD[(-\Delta_{\ka})^z(h)](\xi)=|\xi|^{2z}\mD(h)(\xi),\qquad  \xi \in \Rd.$$ The natural $L^2$ domain of this operator is
$$\Dom_{L^2}((-\Delta_{\ka})^z)=\{h\in L^2 \colon |\xi|^{2\Real z}\mD(h)(\xi)\in L^2\}.$$ 
By Plancherel's formula for the Dunkl transform $(-\Delta_{\ka})^z(h)\in L^2$ for $h\in \mA.$  
The second main result of this section is the following fractional Leibniz rule for $(-\Delta_{\ka})^s,$ in the case $G\sim \bZ_2^d.$
\begin{cor}
	\label{cor:DunklLap_Leib}
	Let $G\sim \bZ_2^d$ and take $1/p=1/p_1+1/p_2,$ with $p,p_1,p_2>1.$ Then, for any $s>0,$ we have
	\begin{equation*}
	%\label{eq:DunklLap_Leib}
	\|(-\Delta_{\ka})^{s}(f g)\|_{p}\lesssim \|(-\Delta_{\ka})^{s}(f)\|_{p_1}\|g\|_{p_2}+\|f\|_{p_1}\|(-\Delta_{\ka})^{s}(g)\|_{p_2},
	\end{equation*}
	where $f,g\in \mA$ and at least one of the functions $f$ or $g$ is invariant by $G.$
\end{cor}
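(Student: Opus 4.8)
The plan is to mirror the structure of the proof of Corollary \ref{cor:LeibDiscLap}, reducing the $d$-dimensional product case to a one-dimensional statement and then handling the one-dimensional case via the Coifman--Meyer theorem (Theorem \ref{thm:DunklLap_Coif-Mey}) combined with a complex-interpolation argument. The essential new tool available here that was absent for the integer order case is the genuine Leibniz rule \eqref{eq:DunkLeib} for the first-order Dunkl operators $\delta_j$, valid precisely when one factor is $G$-invariant; this is exactly why the corollary carries the $G$-invariance hypothesis. First I would reduce to $d=1$: writing $-\Delta_\ka=\sum_{j=1}^d \delta_j^2=\sum_j L_j$ where $L_j$ acts only in the $j$-th variable, I would use a multivariate multiplier theorem (as in the integer-Laplacian argument, via \cite[Corollary 3.2]{WroJSMMSO} or its Dunkl analogue) to see that $(\sum_j L_j)^s(\sum_j L_j^s)^{-1}$ and each $L_j^s(-\Delta_\ka)^{-s}$ are bounded on $L^p$, $p>1$. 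This lets me replace $\|(-\Delta_\ka)^s(fg)\|_p$ by $\sum_j\|L_j^s(fg)\|_p$ and then fiber over the remaining $d-1$ variables, applying the one-dimensional inequality slicewise and summing with Hölder's inequality with exponents $p_1/p,p_2/p$, exactly as in Corollary \ref{cor:LeibDiscLap}.

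For the one-dimensional case I would exploit that $\delta=\delta_1$ satisfies $\delta(f_1f_2)=\delta(f_1)f_2+f_1\delta(f_2)$ whenever one factor is $G\sim\bZ_2$-invariant (i.e. even). Assume $g$ is the $G$-invariant factor. On the Dunkl-transform side, $\delta$ acts as multiplication by $i\xi$ (see \eqref{eq:Dunkldiagon}), so $L^2=-\Delta_\ka=\delta^2$ acts as multiplication by $\xi^2$ and $L=(-\Delta_\ka)^{1/2}$ as multiplication by $|\xi|$. The guiding idea is that the symbol of $(-\Delta_\ka)^s(fg)$ should factor through the symbol $|\xi_1+\xi_2|^{2s}$ evaluated at the Dunkl frequencies, the way the discrete case produced the symbol $|2\sin\pi(\xi_1+\xi_2)|^{2s}$ in Lemma \ref{lem:DiscFracPF}. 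I would therefore first establish a Dunkl analogue of Lemma \ref{lem:DiscFracPF}, namely that for $\Real(z)\ge0$ and nice $\varphi$,
\[
(-\Delta_\ka)^z\bigl(B_\varphi(f,g)\bigr)(x)=\iint \varphi(|\xi_1|,|\xi_2|)\,|\xi_1+\xi_2|^{2z}\,E(i\xi_1,x)E(i\xi_2,x)\,\mD(f)(\xi_1)\mD(g)(\xi_2)\,w(\xi_1)w(\xi_2)\,d\xi,
\]
proved first for $z$ a nonnegative integer by iterating \eqref{eq:DunkLeib} (here the $G$-invariance of $g$ is used repeatedly to legitimize the product rule), then extended to a polynomial $P(-\Delta_\ka)$, and finally to $\la\mapsto\la^z$ by a density argument.

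With this formula in hand, the multiplier symbol is
\[
\frac{|\xi_1+\xi_2|^{2s}}{|\xi_1|^{2s}}\quad\text{or}\quad\frac{|\xi_1+\xi_2|^{2s}}{|\xi_2|^{2s}},
\]
and I would localize using a homogeneous cutoff $\phi(\la_2/\la_1)$ with $\phi(t)+\phi(t^{-1})=1$, $\supp\phi\subseteq[0,1/4]$, exactly as in the integer case, so that on each region one variable dominates. On the region $|\xi_2|\lesssim|\xi_1|$ the symbol $|\xi_1+\xi_2|^{2s}/|\xi_1|^{2s}\cdot\phi(\la_2/\la_1)$ would be checked to satisfy the two-dimensional Mikhlin--Hörmander condition \eqref{eq:Hormcond} of arbitrary order, letting me invoke Theorem \ref{thm:DunklLap_Coif-Mey} to bound $\|B_{m^s}(L^{2s}f,g)\|_p\lesssim\|(-\Delta_\ka)^s f\|_{p_1}\|g\|_{p_2}$; the symmetric region gives the term with roles of $f,g$ reversed. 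I expect the main obstacle to be the cancellation region where $\xi_1+\xi_2$ is small while $\xi_1,\xi_2$ are comparable and large, where the symbol $|\xi_1+\xi_2|^{2s}/|\xi_1|^{2s}$ fails the Hörmander condition for non-integer $s$ because of the singularity of $|\xi_1+\xi_2|^{2s}$ along $\xi_1=-\xi_2$. To handle this I would follow the complex-interpolation scheme of the integer case: view $z\mapsto B_{m^z}(f,g)$ as an analytic family, bound it on the line $\Real z=n$ (an even integer exceeding $s$) where the symbol is smooth and Theorem \ref{thm:DunklLap_Coif-Mey} applies with polynomial-in-$\Ima z$ constants, bound it on $\Real z=0$ using that $(-\Delta_\ka)^{iv}$ is bounded on $L^q$ with at most polynomial growth in $v$ (an imaginary-power estimate for the Dunkl Laplacian, which I would cite or derive from the Mikhlin--Hörmander calculus \eqref{eq:MH}), verify admissible growth in the strip via the integral formula above, and then apply Stein's complex interpolation theorem \cite{St1} to land at $z=s$. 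The genuinely new technical point compared with the $\bZ$ case is supplying the imaginary-power bound $\|(-\Delta_\ka)^{iv}\|_{L^q\to L^q}\lesssim(1+|v|)^N$ in the Dunkl setting; everything else is a faithful transcription of the discrete argument.
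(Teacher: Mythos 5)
Your proposal follows the paper's own proof essentially step for step: the reduction to $d=1$ via \cite[Corollary 3.2]{WroJSMMSO}, the transference formula for $(-\Delta_{\ka})^{z}(B_{\varphi}(f,g))$ (this is exactly Lemma \ref{lem:DunklFracPF}, proved as you describe --- integer $z$ by iterating the Leibniz rule \eqref{eq:DunkLeib}, then polynomials, then a density/continuity argument), the homogeneous cutoff $\phi$, the identification of the anti-diagonal $\xi_1=-\xi_2$ as the obstruction for non-integer $s$, and Stein interpolation between $\Real z=n$ and $\Real z=0$ with the imaginary-power bound supplied by the Mikhlin--H\"ormander calculus of \cite{DaiWan1}. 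One step would fail as literally written: the symbols $|\xi_1+\xi_2|^{2z}|\xi_1|^{-2z}\phi(|\xi_2|/|\xi_1|)$ are functions of $(\xi_1,\xi_2)\in\RR^2$, not of $(|\xi_1|,|\xi_2|)$ (they see the relative sign of the two frequencies), so Theorem \ref{thm:DunklLap_Coif-Mey}, which covers only bi-radial multipliers $m(|\xi_1|,|\xi_2|)$, cannot be invoked for them directly. The paper instead appeals to Corollary \ref{cor:AmGasSif}, the general non-bi-radial bilinear multiplier theorem for $G\sim\bZ_2$, which is itself deduced from the bi-radial case by means of the projection onto positive Dunkl frequencies --- the precise analogue of the operators $H$ and $I-H$ that the discrete argument inserts by hand and that your one-dimensional step is missing. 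A second, purely technical omission: to make the analytic family $z\mapsto B_{m^{z}}(f,g)$ act on simple functions $g$, as Stein's theorem requires, the paper first regularizes the symbol by $e^{-\e|\xi|^2}$ and removes the regularization at the end by Fatou's lemma.
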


Before proving the fractional Leibniz rule we need a lemma which is an analogue of Lemma \ref{lem:DiscFracPF}. Its proof is similar, however a bit more technical. Therefore we give more details.
\begin{lem}
\label{lem:DunklFracPF}
Take $d=1$ and let $G\sim \bZ_2.$ Assume that at least one of the functions $f,g\in \mA$ is $G$-invariant. Take $\Real(z)\geq 0$ and let $\varphi\colon \bR^2\to \bC$ be a bounded function that satisfies the Mikhlin-H\"ormander condition \eqref{eq:defDunklBMext} of order $s>2\la_{\kappa}+6$. Then
\begin{equation*}
%\label{eq:Claim_DunklLap_Leib}
(-\Delta_{\ka})^{z}(B_{\varphi}(f,g))(x)=\iint_{\RR^2}\,\varphi(\xi)|\xi_1+\xi_2|^{2z}\,\mD(f)(\xi_1)\,\mD(g)(\xi_2)\,E(i\xi_1,x)E(i\xi_2,x)\,w(\xi_1)w(\xi_2)d\xi,
\end{equation*}
for almost all $x\in \Rd.$ 
\end{lem}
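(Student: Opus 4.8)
The plan is to follow the same strategy used for Lemma \ref{lem:DiscFracPF}, namely to prove the identity first for polynomials applied to $-\Delta_{\ka}$ and then pass to the power function $\la\mapsto \la^{z}$ by a density/continuity argument. First I would observe that by Theorem \ref{thm:DunklLap_Coif-Mey} the bilinear operator $B_{\varphi}(f,g)$ lands in $L^2(\Rd,w_{\ka})$, so that $(-\Delta_{\ka})^{z}(B_{\varphi}(f,g))$ is well defined via the functional calculus, and a continuity argument in $z$ reduces matters to $\Real(z)>0$ exactly as before. The key computational input is the eigenfunction-type relation for the Dunkl kernel: from \eqref{eq:DunkkerEq} one has $\Do_{j,x}(E(i\xi,x))=i\xi_j E(i\xi,x)$, hence $-\Delta_{\ka,x}(E(i\xi_1,x)E(i\xi_2,x))=|\xi_1+\xi_2|^2\,E(i\xi_1,x)E(i\xi_2,x)$, \emph{provided} the product $E(i\xi_1,\cdot)E(i\xi_2,\cdot)$ behaves like a joint eigenfunction of $-\Delta_{\ka}$ with eigenvalue $|\xi_1+\xi_2|^2$.

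Granting that product eigenrelation, I would integrate against the representation \eqref{eq:defDunklBMext}: applying $(-\Delta_{\ka})^k$ under the integral sign produces a factor $|\xi_1+\xi_2|^{2k}$, so for every polynomial $P$ one obtains
\begin{equation*}
P(-\Delta_{\ka})(B_{\varphi}(f,g))(x)=\iint_{\RR^2}\varphi(\xi)\,P(|\xi_1+\xi_2|^2)\,\mD(f)(\xi_1)\mD(g)(\xi_2)\,E(i\xi_1,x)E(i\xi_2,x)\,w(\xi_1)w(\xi_2)\,d\xi.
\end{equation*}
A density argument in the spectral variable, approximating $\la\mapsto \la^{z}$ uniformly on the (bounded, since $f,g\in\mA$) relevant spectral range by polynomials, then upgrades this to the power function and yields the claimed formula with $|\xi_1+\xi_2|^{2z}$. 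The passage under the integral sign is justified because for $f,g\in\mA$ the Dunkl transforms $\mD(f),\mD(g)$ are compactly supported away from the origin, making all integrands absolutely integrable and the spectrum of $-\Delta_{\ka}$ restricted to a compact interval.

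The main obstacle, and the reason the proof is ``a bit more technical'' than Lemma \ref{lem:DiscFracPF}, is precisely justifying the product eigenrelation $-\Delta_{\ka,x}\big(E(i\xi_1,x)E(i\xi_2,x)\big)=|\xi_1+\xi_2|^2\,E(i\xi_1,x)E(i\xi_2,x)$. Unlike the ordinary exponentials on $\ZZ$, the Dunkl operators $\Do_j$ do \emph{not} satisfy a product Leibniz rule in general (see \eqref{eq:DunkLeib}), so $\Do_{j,x}\big(E(i\xi_1,x)E(i\xi_2,x)\big)$ is \emph{not} simply $i(\xi_{1,j}+\xi_{2,j})E(i\xi_1,x)E(i\xi_2,x)$ for arbitrary arguments. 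This is exactly where the hypothesis that one of $f,g$ is $G$-invariant enters: the $G$-invariance forces the corresponding Dunkl transform to be even in each variable, which allows one to symmetrize and invoke the Leibniz rule \eqref{eq:DunkLeib} (valid when one factor is $G$-invariant) to recover the clean eigenvalue $|\xi_1+\xi_2|^2$. I would therefore spend the bulk of the argument carefully verifying that, under this invariance assumption, the Dunkl-Laplacian indeed acts on the product of Dunkl kernels with the expected eigenvalue, after which the polynomial-then-density scheme closes the proof as above.
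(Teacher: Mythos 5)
Your proposal is correct and follows essentially the same route as the paper's proof: reduce to $\Real(z)>0$ by continuity, establish the identity for $z=n\in\NN$ by iterating the first-order Dunkl operator — where the $G$-invariance of one factor is exactly what licenses the Leibniz rule \eqref{eq:DunkLeib}, implemented in the paper by replacing $E(i\xi_1,x)$ with the symmetrized kernel $E_G(i\xi_1,x)=|G|^{-1}\sum_{g}E(i\xi_1,gx)$ under the integral — and then pass to $\la\mapsto\la^{z}$ by uniform polynomial approximation on the compact spectral range determined by $\supp\mD(f),\supp\mD(g)$. The only detail you elide is that the paper carries out the limiting step by pairing against $h\in\mA_2\cap\mS(\RR)$ and invoking dominated convergence on both sides, but this is a routine implementation of the density argument you describe.
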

\begin{remark}
	It is not obvious why $B_{\varphi}(f,g)\in \Dom_{L^2}((-\Delta_{\ka})^{z}).$ This is explained in the proof of the lemma.
\end{remark}
\begin{proof}
Since the argument is symmetric in $f$ and $g$  we assume that $f$ is $G$-invariant. Denote $E_G(i\xi_1,x)=|G|^{-1}\sum_{g\in G}E(i\xi_1,gx),$ and observe that $E_G$ is $G$-invariant in $x.$  Then, since both $f$ and $\mD(f)$ are $G$-invariant our task reduces to proving that
\begin{equation}
\label{eq:Claim_DunklLap_Leib1}
(-\Delta_{\ka})^{z}(B_{\varphi}(f,g))(x)=\int_{\RR}\int_{\RR}\,\varphi(\xi)|\xi_1+\xi_2|^{2z}\,\mD(f)(\xi_1)\,\mD(g)(\xi_2)\,E_G(i\xi_1,x)E(i\xi_2,x)\,w(\xi_1)w(\xi_2)d\xi,
\end{equation}
for almost all $x\in \Rd.$

For $z=n\in \NN$ this formula is a direct computation, and follows from the Leibniz rule. Indeed, by \eqref{eq:DunkLeib} and \eqref{eq:DunkkerEq} we have
\begin{align*}\Do(B_{\varphi}(f,g))(x)&=\iint_{\RR^2}\,\varphi(\xi)\mD(f)(\xi_1)\,\mD(g)(\xi_2)\,\Do_x[E_G(i\xi_1,x)E(i\xi_2,x)]\,w(\xi_1)w(\xi_2)d\xi\\
&=\iint_{\RR^2}\,\varphi(\xi)\mD(f)(\xi_1)\,\mD(g)(\xi_2)\,i(\xi_1+\xi_2)\,E_G(i\xi_1,x)E(i\xi_2,x)\,w(\xi_1)w(\xi_2)d\xi,
\end{align*}
the interchange of differentiation and integration being allowed since $f,g\in \mA.$ 
Iterating the above equality $2n$ times we obtain \eqref{eq:Claim_DunklLap_Leib1} for $z=n.$ 

We remark that \eqref{eq:Claim_DunklLap_Leib1} for $z\in \NN$  also explains why does $(-\Delta_{\kappa})^{z}(B_{\varphi}(f,g))$ make sense for general $\Real(z)\ge 0$. Indeed, let $n$ be an integer larger than $\Real(z).$ Then, to prove that $B_{\varphi}(f,g)\in \Dom_{L^2}((-\Delta_{\kappa})^{z})$ it is enough to show that $B_{\varphi}(f,g)\in \Dom_{L^2}((-\Delta_{\kappa})^{n}).$ Now, using \eqref{eq:Claim_DunklLap_Leib1} for $z=n,$ together with the binomial formula and \eqref{eq:Dunkldiagon}, we arrive at
\begin{align*}
%\label{eq:Claim_DunklLap_Leib1}
&(-\Delta_{\ka})^{n}(B_{\varphi}(f,g))(x)\\
&=\sum_{j=0}^{2n} {{2n}\choose {j}}\int_{\RR}\int_{\RR}\,\varphi(\xi)\,\mD(\delta ^j f)(\xi_1)\,\mD(\delta  ^{2n-j}g)(\xi_2)\,E_G(i\xi_1,x)E(i\xi_2,x)\,w(\xi_1)w(\xi_2)d\xi,
\end{align*}
with $\delta$ being the Dunkl operator on $\RR.$ 
Since $f,g$ belong to $\mA_2$ the same is true for $\delta^j f$ and $\delta^{2n-j} g.$ Thus, an application of Corollary \ref{cor:AmGasSif} proves that $B_{\varphi}(f,g)\in \Dom_{L^2}((-\Delta_{\kappa})^{n}),$ as desired.

We come back to demonstrating \eqref{eq:Claim_DunklLap_Leib1} for general $\Real(z)\ge 0.$ Note first that by a continuity argument it suffices to consider $\Real(z)>0.$ Denoting
$$T_z(f,g)(x)=\iint_{\RR^2}\,\varphi(\xi)|\xi_1+\xi_2|^{2z}\,\mD(f)(\xi_1)\,\mD(g)(\xi_1)\,E(i\xi_1,x)E(i\xi_2,x)\,w(\xi_1)w(\xi_2)d\xi.$$
our task is reduced to proving that
\begin{equation}
\label{eq:DunklFormDual1}
\langle (-\Delta_{\ka})^{z}(B_{\varphi}(f,g)), h \rangle_{L^2}=\langle T_z(f,g), h \rangle_{L^2},
\end{equation}
for $h\in \mA_2\cap \mS(\RR)$ (recall that $\mA_2$ is given by \eqref{eq:DunklA2}).  This is enough because $\mA_2\cap \mS(\RR)$ is dense in $L^2.$ From \eqref{eq:Claim_DunklLap_Leib1} for $z\in \NN$  we deduce that for any polynomial $P$ it holds
\begin{equation}
\label{eq:Claim_DunklLap_Leib1Aux}
\begin{split}
&P(-\Delta_{\ka})(B_{\varphi}(f,g))(x)\\
&=\iint_{\RR^2}\,\varphi(\xi)P(|\xi_1+\xi_2|^2)\,\mD(f)(\xi_1)\,\mD(g)(\xi_2)\,E_G(i\xi_1,x)E(i\xi_2,x)\,w(\xi_1)w(\xi_2)d\xi.
\end{split}
\end{equation}
For brevity we denote by $T^P(f,g)(x)$ the right hand side of \eqref{eq:Claim_DunklLap_Leib1Aux}.
Note that $\mD(f),$ $\mD(g),$ and $\mD(h)$ are supported in $[-N,N]$ for some large $N.$ Let $\{P_{r}(t)\}_{r\in\NN},$ be a sequence of polynomials that converges uniformly to $t^{z}$ on $[0,4N^2].$ Then, \eqref{eq:DunklPlan}, \eqref{eq:Dunkldiagon}, and \eqref{eq:Claim_DunklLap_Leib1Aux} imply
\begin{equation}
\label{eq:DunklPjform}
\begin{split}
&\int_{\RR}\,P_r(|\zeta|^2)\mD(B_{\varphi}(f,g))(\zeta)\,\mD(\bar{h})(\zeta)\,w(\zeta)\,d\zeta=\langle P_r(-\Delta_{\ka})(B_{\varphi}(f,g)),h\rangle_{L^2}\\
&=\langle T^{P_r}(f,g),h\rangle_{L^2}.
\end{split}\end{equation}
Now, since $\supp \mD(\bar{h})\subseteq [-N,N]$ and $\mD(B_{\varphi}(f,g))\,\mD(\bar{h})\in L^1,$ the dominated convergence theorem shows that the left hand side of \eqref{eq:DunklPjform} converges to $\langle (-\Delta_{\ka})^{z}(B_{\varphi}(f,g)), h \rangle_{L^2}$ as $r\to \infty.$ Similarly, since $\mD(f)$ and $\mD(g)$ are supported in $[-N,N]$ the expression $T^{P_r}(f,g)(x)$ is uniformly bounded in $r\in \NN$ and $x\in \RR$ and converges to $T_z(f,g)(x)$ as $r\to \infty.$  As $h\in \mS(\RR)$ the dominated convergence theorem implies $\lim_{r\to \infty}\langle T^{P_r}(f,g),h\rangle_{L^2} = \langle T_z(f,g), h\rangle_{L^2}.$ Therefore, \eqref{eq:DunklFormDual1} is justified and hence, also \eqref{eq:Claim_DunklLap_Leib1}. This completes the proof of Lemma \ref{lem:DunklFracPF}.
\end{proof}

We now pass to the proof of Corollary \ref{cor:DunklLap_Leib}.
\begin{proof}
By repeating the argument from the beginning of the proof of Corollary \ref{cor:LeibDiscLap} (with sums replaced by integrals) our task is reduced to $d=1.$ We devote the present paragraph to a brief justification of this statement Here we need the fact that for $s\ge 0$ and $L_j=-\Do_j^2,$ the operators  $(L_j)^{s}(-\Delta_{\kappa})^{-s}$ as well as $(-\Delta_{\kappa})^{s}(\sum_{j=1}^d(L_j)^s)^{-1}$, are bounded on all $L^p,$ $1<p<\infty.$ This is true by e.g. \cite[Corollary 3.2]{WroJSMMSO}, since in the product setting each $L_j,$ $j=1,\ldots,d,$ generates a symmetric contraction semigroup. Then we are left with showing that
\begin{equation}
\label{eq:LeibDunklLapAux2}
\|L_j^s(fg)\|_p\lesssim \|L_j^{s}f\|_{p_1}\|g\|_{p_2}+ \|L_j^{s}g\|_{p_1}\|f\|_{p_2}
\end{equation}
cf.\ \eqref{eq:LeibDiscLapAux1}. The proof of \eqref{eq:LeibDunklLapAux2} is similar to that of \eqref{eq:LeibDiscLapAux1}, thus we give a sketch when $j=1.$ For $t\in \RR$ and $x\in \RR^{d-1},$  consider the auxiliary functions $f_x(t)=f((t,x))$ and $g_x(t)=g((t,x)).$ Then, setting
$w_{\kappa}^{(1)}(x)=\prod_{i=2}^{d}w_{\kappa_{i}}(x),$ we write 
$$\|L_1^{s}(fg)\|_{p}=\int_{\RR^{d-1}} \|L_1^{s}(f_x(\cdot)g_x(\cdot))\|_{L^p(\RR,w_{\kappa_1})}^{p}\,w_{\kappa}^{(1)}(x)\,dx.$$ 
From this point on we repeat the steps in the proof of \eqref{eq:LeibDiscLapAux1}. Namely, we apply the fractional Leibniz rule for $d=1$ and H\"older's inequality (for integrals). We omit the details here. From now on we focus on proving Corollary \ref{cor:DunklLap_Leib} for $d=1.$

Let $\phi$ be a function in $C^{\infty}([0,\infty))$ supported in $[0,1/4]$ and such that $\phi(t)+\phi(t^{-1})=1.$ Setting
\begin{align*}
T_1(f,g)(x)=\iint_{\RR^2}\,\phi(|\xi_2|/|\xi_1|)|\xi_1+\xi_2|^{2s}\,\mD(f)(\xi_1)\,\mD(g)(\xi_2)\,E(i\xi_1,x)E(i\xi_2,x)\,w(\xi_1)w(\xi_2)d\xi,\\
T_2(f,g)(x)=\iint_{\RR^2}\,\phi(|\xi_1|/|\xi_2|)|\xi_1+\xi_2|^{2s}\,\mD(f)(\xi_1)\,\mD(g)(\xi_2)\,E(i\xi_1,x)E(i\xi_2,x)\,w(\xi_1)w(\xi_2)d\xi. \end{align*}
and using Lemma \ref{lem:DunklFracPF} with $\varphi\equiv 1$  we rewrite \begin{equation*}(-\Delta_{\ka})^{s}(fg)=T_1(f,g)+T_2(f,g).\end{equation*}
From now on the proof resembles that of Corollary \ref{cor:LeibDiscLap} (in fact it is even easier). We need to prove,  for $f,g\in\mA,$ the estimate 
\begin{equation*}
%\label{eq:DunklLap_Leib_Aux1}
\|T_1(f,g)\|_{p}\leq C \|(-\Delta_{\ka})^{s}f\|_{p_1}\|g\|_{p_2},\qquad \|T_2(f,g)\|_{p}\leq C \|f\|_{p_1}\|(-\Delta_{\ka})^{s}g\|_{p_2}.
\end{equation*}
We focus only on the first inequality, as the proof of the second is analogous. For $\Real(z)\geq 0$ we set
\begin{equation*}m^z(\xi_1,\xi_2)=\frac{|\xi_1+ \xi_2|^{2z}}{|\xi_1|^{2z}}\phi(|\xi_2|/|\xi_1|),\qquad \xi \in \bR^2,\end{equation*}
so that $T_1(f,g)=B_{m^s}((-\Delta_{\kappa})^s f,g).$
Since $\mA$ is preserved under $(-\Delta_{\kappa})^s$ our task is reduced to showing that, for $s>0$ it holds 
\begin{equation}\label{eq:DunklLap_Leib_Aux4}
\|B_{m^s}(f,g)\|_{p}\leq C \|f\|_{p_1}\|g\|_{p_2},\qquad f,g\in \mA
\end{equation}

As in Section \ref{sec:Zd} we are going to apply Stein's complex interpolation theorem. To do this we need to extend $B_{m^z}(f,g)$ outside of $\mA\times \mA,$ by allowing $g$ to be a simple function. This may be achieved by a limiting process. Namely, instead of $m^z$ we consider $m^z_{\e}=m^z e^{-\e|\xi|^2}.$ Then, 
\begin{equation}
\label{eq:DunklLapBmepsDef}
\begin{split}
&B_{m^s_{\e}}(f,g)(x)\\
&:=\int_{\Rd}\int_{\Rd}\,e^{-\e|\xi|^2}\phi(|\xi_2|/|\xi_1|)\,\frac{|\xi_1+\xi_2|^{2z}}{|\xi_1|^{2z}}\,\mD(f)(\xi_1)\,\mD(g)(\xi_2)\,E(i\xi_1,x)E(i\xi_2,x)\,w(\xi_1)w(\xi_2)d\xi
\end{split}
\end{equation}
converges pointwise to $B_{m^s}(f,g)$ as $\varepsilon\to 0^+,$  whenever $f,g \in \mA.$ Therefore, by Fatou's Lemma, to prove \eqref{eq:DunklLap_Leib_Aux4} for $B_{m^s}$ it is enough to prove it for each $B_{m^s_{\e}},$ $\e>0,$ as long as
\begin{equation*}
%\label{eq:DunklLap_Leib_Aux2}
\|B_{m^s_{\e}}(f,g)\|_{p}\leq C \|f\|_{p_1}\|g\|_{p_2},
\end{equation*}
where $C$ is independent of $\e$. The gain is that now \eqref{eq:DunklLapBmepsDef} is well defined for $g\in L^2,$ in particular it is valid for simple functions.

Let $n>2\la_{\ka}+6.$ Then the multipliers $m^{n+iv}_{\e},$ $j=1,2,$ $v\in \bR,$ satisfy H\"ormander's condition \eqref{eq:Hormcond2} of order $2\la_{\ka}+6$. Thus, using Corollary \ref{cor:AmGasSif} (with $d=1$) we obtain
\begin{equation*}
\|B_{m^{n+iv}_{\e}}(f,g)\|_{p}\leq C_n(1+|v|)^{2\la_{\ka}+2} \|f_1\|_{p_1}\|f_2\|_{p_2},\qquad v\in \bR.
\end{equation*}
Now, Lemma \ref{lem:DunklFracPF} applied to $\varphi(\xi)=\phi(|\xi_2|/|\xi_1|)e^{-\e(|\xi|^2)}$ implies
$$B_{m^{iv}_{\e}}(f,g)=(-\Delta_{\ka})^{iv}\big[B_{\varphi}((-\Delta_{\ka})^{-iv}f,g)\big].$$ Thus, using \cite[Theorem 4.1]{DaiWan1} followed by Corollary \ref{cor:AmGasSif} (for the multiplier $\phi(|\xi_2|/|\xi_1|)e^{-\e(|\xi|^2)}$) we obtain
\begin{equation*}
\|B_{m^{iv}}(f,g)\|_{p}\leq C(1+|v|)^{2\la_{\ka}+2} \|f_1\|_{p_1}\|f_2\|_{p_2},\qquad v\in \bR.
\end{equation*}
By definition
\begin{align*}&B_{m^{z}_{\e}}(f,g)\\
&
=\iint_{\RR^2}\,\frac{|\xi_1+ \xi_2|^{z}}{|\xi_1|^{z}}\phi(|\xi_2|/|\xi_1|)e^{-\e|\xi|^2}\,\mD(f)(\xi_1)\,\mD(g)(\xi_2)\,E(i\xi_1,x)E(i\xi_2,x)\,w(\xi_1)w(\xi_2)d\xi_1d\xi_2.\\
\end{align*}
Hence, for fixed $f_1\in \mA$ the family $\{B_{m^{z}}(f,g)\}_{\Real(z)>0}$ consists of analytic operators. This family has admissible growth, more precisely, for each simple function $h$ we have
$$\big|\langle B_{m^{z}}((-\Delta_{\ka})^{z}f,g),h\rangle_{L^2}\big|\leq C_{f,g,h,s},\qquad |\Real(z)|\leq s.$$
  Consequently, using Stein's complex interpolation theorem is permitted and leads to \eqref{eq:DunklLap_Leib_Aux4}. The proof of the corollary is thus finished.
\end{proof}

\section{Bilinear multipliers for Jacobi trigonometric polynomials}

\label{sec:Jac}

In this section we give a bilinear multiplier theorem for expansions in terms of Jacobi trigonometric polynomials. Contrary to the previous sections we do not prove a fractional Leibniz rule here. The reason for this is that there is no natural first order operator in the Jacobi setting that satisfies a Leibniz-type rule of integer order.

Let $\alpha,\beta > -1/2$ be fixed, and let $P_n^{\ab}$ be the one-dimensional Jacobi polynomials of type $\ab.$ For $n \in \mathbb{N}$ and $-1<x<1$ these are given by the Rodrigues formula
$$
P_n^{\ab}(x) = \frac{(-1)^k}{2^n n!} (1-x)^{-\alpha}(1+x)^{-\beta} \frac{d^n}{dx^n}
\Big[ (1-x)^{\alpha+k}(1+x)^{\beta+k}\Big].$$
We now substitute $x=\cos \theta,$ $\theta \in [0,\pi],$ and consider
the trigonometric Jacobi polynomials
$
P_n^{\ab}(\cos\theta).
$
This is an orthogonal and complete system in $L^2(d\mu_{\ab})$, where
$$
d\mu_{\ab}(\theta) = \Big( \sin\frac{\theta}2 \Big)^{2\alpha+1} 
\Big( \cos\frac{\theta}2\Big)^{2\beta+1} d\theta.
$$
Throughout this chapter we abbreviate $L^p:=L^p([0,\pi],\mu_{\ab})$ and $\|\cdot\|_p:=\|\cdot\|_{L^p}.$
 Now, setting $\P_n(\theta)=\P_n^{\ab}(\theta)=c_n^{\ab}P_k^{\ab}(\cos\theta),$ where $\|P_n^{\ab}(\cos\cdot)\|_{2}=(c_n^{\ab})^{-1}$ we obtain a complete orthonormal system in $L^2.$ Each $\P_n^{\ab}$ is an eigenfunction of the differential operator 
$$
\J=\J^{\ab} = - \frac{d^2}{d\theta^2} - \frac{\alpha-\beta+(\alpha+\beta+1)\cos\theta}{\sin \theta}
\frac{d}{d\theta} + \Big( \frac{\alpha+\beta+1}{2}\Big)^2;
$$
with the corresponding eigenvalue being $( n + \frac{\alpha+\beta+1}{2})^2$. In what follows we set $\gamma=(\alpha+\beta+1)/2;$ observe that $\gamma>0.$

In this setting the spectral multipliers of $\J^{1/2}$ are given by $$
\mu(\J^{1/2}) f=\sum_{n\in \bN} \mu\big( n + \gamma\big) \lan f,\P_k\ran_{L^2} \P_k.
$$ 
If $\mu\colon \mathbb{R}_+\to \mathbb{C}$ is bounded, then $\mu(\J^{1/2})$ is a bounded operator  on $L^2.$ In this section the formula \eqref{eq:genmult} defining bilinear multipliers becomes
\begin{equation}
\label{eq:Jacmult}
\begin{split}
&B_m(f_1,f_2)(\theta)=m(\J^{1/2}\otimes I,I\otimes \J^{1/2})(x,x)\\
&=\sum_{n_1\in \bN,n_2 \in \bN}m\Big(n_1 + \gamma,n_2 + \gamma\Big)\lan f_1,\P_{n_1}\ran \lan f_2,\P_{n_2}\ran\,\P_{n_1}(\theta)\P_{n_2}(\theta).
\end{split}
\end{equation}
The space $\mA$ from \eqref{eq:Aspaces} coincides with the linear span of $\{\P_n\}_{n\in \bN}.$ We prove the following Coifman-Meyer type multiplier theorem.

\begin{thm}
	\label{thm:Jac_Coif-Mey} Assume that $m$ satisfies H\"ormander's condition \eqref{eq:Hormcond} of order $s>4(\alpha+\beta)+15.$ Then the bilinear multiplier operator given by \eqref{eq:Jacmult} is bounded from $L^{p_1}\times L^{p_2}$ to $L^p,$ where $1/p_1+1/p_2=1/p,$ and $p_1,p_2,p>1.$ Moreover, the bound \eqref{eq:thm_gen_bound} is valid.
\end{thm}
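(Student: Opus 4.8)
The plan is to obtain Theorem~\ref{thm:Jac_Coif-Mey} as a direct application of Theorem~\ref{thm:genCoif-Mey} to the operator $L=\J^{1/2}$. Thus the whole argument reduces to checking the five structural hypotheses of Theorem~\ref{thm:genCoif-Mey}---injectivity, \eqref{eq:Con}, \eqref{eq:MH}, \eqref{eq:WD}, and \eqref{eq:PF}---and to verifying that the Mikhlin--H\"ormander order $\rho$ of $\J^{1/2}$ is small enough that $s>2\rho+4$ is guaranteed by the stated assumption $s>4(\alpha+\beta)+15$. Two of these are immediate. The spectrum of $\J^{1/2}$ is $\{n+\gamma:n\in\bN\}\subset(0,\infty)$ because $\gamma>0$, so $0$ is not an eigenvalue and $L$ is injective. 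For \eqref{eq:WD} note that every $f\in\mA$ is a finite combination of the $\P_n$, so for $f_1,f_2\in\mA$ the function $m(L_1,L_2)(f_1\otimes f_2)$ is a finite linear combination of the continuous functions $\P_{n_1}\otimes\P_{n_2}$, whose supremum norm is bounded by $\|m\|_{L^\infty}$ times a constant depending only on $f_1,f_2$.

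For the contractivity \eqref{eq:Con} I would split $\J=\J_0+\gamma^2$, where $\J_0=-w^{-1}\frac{d}{d\theta}\big(w\frac{d}{d\theta}\big)$ with $w(\theta)=(\sin\tfrac{\theta}{2})^{2\alpha+1}(\cos\tfrac{\theta}{2})^{2\beta+1}$ is the Jacobi diffusion generator; it generates a symmetric Markovian semigroup, hence an $L^p$ contraction, and multiplying by $e^{-t\gamma^2}\le 1$ keeps $e^{-t\J}$ a contraction on every $L^p$. Subordination then transfers this to $e^{-t\J^{1/2}}=e^{-tL}$, giving \eqref{eq:Con}. The remaining assumption \eqref{eq:MH} I would import from a known Mikhlin--H\"ormander multiplier theorem for Jacobi expansions, which furnishes a functional calculus for $\J^{1/2}$ of a finite order $\rho$ comparable to $2(\alpha+\beta)$.

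The one genuinely setting-specific step is \eqref{eq:PF}, and here I would exploit that products of Jacobi polynomials have controlled degree. Fix $f_1,f_2\in\mA$ and take $b=3$. Since $\varphi_k$ is supported in $[0,2^{k-b}]$ and $\psi_k$ in $[2^{k-2},2^{k+2}]$, the functions $\varphi_k(L)f_1=\sum_{n+\gamma\le 2^{k-b}}\varphi_k(n+\gamma)\langle f_1,\P_n\rangle\P_n$ and $\psi_k(L)f_2=\sum_{2^{k-2}\le n+\gamma\le 2^{k+2}}\psi_k(n+\gamma)\langle f_2,\P_n\rangle\P_n$ involve only the indicated degrees. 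As $\P_{n_1}\P_{n_2}$ is a polynomial of degree $n_1+n_2$ in $\cos\theta$, and---by orthogonality---its expansion in the basis $\{\P_m\}$ runs only over $|n_1-n_2|\le m\le n_1+n_2$, the product $\varphi_k(L)f_1\cdot\psi_k(L)f_2$ is a combination of $\P_m$ whose eigenvalues $m+\gamma$ satisfy $m+\gamma\le(n_1+\gamma)+(n_2+\gamma)\le 2^{k-b}+2^{k+2}\le 2^{k+3}$ and, since here $n_2>n_1$, also $m+\gamma\ge(n_2+\gamma)-(n_1+\gamma)\ge 2^{k-2}-2^{k-b}\ge 2^{k-3}$. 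Thus the $L$-spectrum of the product lies in $[2^{k-3},2^{k+3}]\subset[2^{k-3-b},2^{k+3+b}]$, where $\tpsi_k\equiv 1$, so applying $\tpsi_k(L)$ acts as the identity. This is exactly \eqref{eq:PF}. Crucially, only the \emph{support} of the Jacobi linearization is used, not the values of its coefficients, so no positivity or product-formula machinery is needed.

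With all hypotheses verified, Theorem~\ref{thm:genCoif-Mey} yields the boundedness of $B_m$ and the bound \eqref{eq:thm_gen_bound} whenever $s>2\rho+4$; I would close by recording that the order $\rho$ supplied by the Jacobi Mikhlin--H\"ormander calculus satisfies $2\rho+4\le 4(\alpha+\beta)+15$, so the hypothesis $s>4(\alpha+\beta)+15$ suffices and the powers $(1+|n_j|)^{\rho}$ and $(1+|n_j|)^{\rho+2}$ coming from Propositions~\ref{thm:CkimpliesSquare} and~\ref{thm:CkimpliesMaximal} remain summable against the $(1+|n|)^{-s}$ decay. I expect the main obstacle to be pinning down the Mikhlin--H\"ormander functional calculus for $\J^{1/2}$ with an explicit finite order $\rho$ matching the claimed threshold (the deepest external input); by contrast the only new per-setting computation, \eqref{eq:PF}, is easy and robust precisely because it rests solely on the degree structure of polynomial products.
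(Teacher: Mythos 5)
Your proposal is correct and follows essentially the same route as the paper: both reduce to Theorem~\ref{thm:genCoif-Mey} for $L=\J^{1/2}$ and verify \eqref{eq:PF} with $b=3$ via the linearization formula \eqref{eq:PolProd}, using only that the expansion of $\P_{n_1}\P_{n_2}$ runs over $|n_1-n_2|\le j\le n_1+n_2$ together with $\gamma>0$. The only differences are bookkeeping: the paper gets \eqref{eq:Con} by transferring to the Markovian Jacobi polynomial semigroup $T_t^{\alpha,\beta}$ (equivalent to your divergence-form observation) and pins down \eqref{eq:MH} by citing \cite{WroMSM} with the explicit order $\rho=2\alpha+2\beta+13/2$, the one external input you leave as a placeholder.
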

\begin{remark}
	The theorem implies a Coifman-Meyer type multiplier result for bilinear multipliers associated with the modified Hankel transform. This follows from a transference results of Sato \cite{Sato1}.
\end{remark}
\begin{proof}
Once again the proof hinges on Theorem \ref{thm:genCoif-Mey}. We need to verify that $L=\J^{1/2}$ satisfies its assumptions. The injectivity condition is clear since $0$ is not an eigenvalue of $\J^{1/2}.$ The contractivity assumption \eqref{eq:Con} can be inferred from the formula $$e^{-t\J}(f\circ \cos)(\theta)=e^{-t(\alpha+\beta+1)^2/4}T_t^{\alpha,\beta}f(\cos \theta)$$ relating the semigroup $e^{-t\J}$ with the semigroup $T_t^{\alpha,\beta}$ from the Jacobi polynomial setting, as $T_t^{\alpha,\beta}$ is well known to be Markovian. The condition \eqref{eq:WD} is straightforward, since $\mA$ is the linear span of Jacobi trigonometric polynomials.  The Mikhlin-H\"ormander functional calculus \eqref{eq:MH} for $\J^{1/2}$ (with $\rho=2\alpha+2\beta+13/2$) was obtained in \cite[Corollary 4.3]{WroMSM}.

It remains to show \eqref{eq:PF}. Here we need the following identity
\begin{equation}
\label{eq:PolProd}
\P_{n_1}(\theta)\P_{n_2}(\theta)=\sum_{j=|n_1-n_2|}^{j=n_1+n_2}c_{n_1,n_2}(j)\,\P_j(\theta).
\end{equation}
The above is well known to hold for general orthogonal polynomials on an interval contained in $\RR$, hence also for $\P_j$ as they are merely a reparametrisation of the Jacobi polynomials. 

We prove that \eqref{eq:PF} holds with $b=3.$ Take $f,g\in \mA.$ Then $$f_1=\sum_{n_1\in \NN} c_{n_1}^1\P_{n_1},\qquad f_2=\sum_{n_2\in \NN} c_{n_2}^2\P_{n_2},$$ where all but a finite number of $c_n^1,$ $c_n^2$ vanish. Denote
$$R_{a,b}=\{n\in \bN \colon 2^{a}-\gamma\le n\le 2^{b}-\gamma\}.$$ 
Since $\varphi_k$ and $\psi_k$ are supported in $[0,2^{k-3}]$ and $[2^{k-1},2^{k+1}],$ respectively, we have
$$\varphi_k(L)(f_1)=\sum_{n_1\in \NN\colon n_1+\gamma\le 2^{k-3}}c_{n_1}^1\, \varphi_k(n_1+ \gamma)\,\P_{n_1}$$
whereas
$$\psi_k(L)(f_2)=\sum_{n_2\in R_{k-1,k+1}}c_{n_2}^2\, \psi_k(n_2+ \gamma)\,\P_{n_2}.$$

Now, if $n_1+\gamma \le 2^{k-3}$ and $2^{k-1}\le n_2+\gamma \le 2^{k+1},$ then we must also have $$|n_1-n_2|\ge 2^{k-1}-2^{k-3}\ge 2^{k-2}\quad\textrm{and}\quad  n_1+n_2 \le 2^{k-3}-\gamma +2^{k+1}-\gamma \le 2^{k+2}-2\gamma.$$
Since $\gamma>0,$ we see that if $|n_1-n_2|\le n\le n_1+n_2,$ then $2^{k-2}\le n+\gamma\le 2^{k+2} .$ Consequently, in view of \eqref{eq:PolProd}, the operator $\tpsi_k(L)$ leaves invariant each product $\P_{n_1}\cdot \P_{n_2},$ hence, also $\varphi_k(L)(f_1)\cdot \psi_k(L)(f_2).$ The proof of \eqref{eq:PF} is thus completed.

\end{proof}

\subsection*{Acknowledgments}
I thank prof.\ Krzysztof Stempak for suggesting the idea to combine joint spectral multipliers with multilinear multipliers, prof.\ Christoph Thiele for a discussion on the Coifman-Meyer multiplier theorem and useful remarks during the preparation of the paper, prof.\ Camil Muscalu for a discussion on multilinear multipliers, prof.\ Herbert Koch, prof.\ Fulvio Ricci, and dr Gian Maria Dall'Ara for discussions on the fractional Leibniz rule, dr hab.\ Wojciech M\l otkowski for bringing to my attention the formula \eqref{eq:PolProd}, and dr Jotsaroop Kaur for discussions on Hermite polynomials.

Part of the research presented in this paper was carried over while the author was \textit{Assegnista di ricerca}
at the Universit\`a di Milano-Bicocca. The research was supported by Italian PRIN 2010 ``Real and complex manifolds:
geometry, topology and harmonic analysis"; Polish funds for sciences, National Science Centre (NCN), Poland, Research Project 2014\slash 15\slash D\slash ST1\slash 00405; and by the Foundation for Polish Science START Scholarship.


\begin{thebibliography}{16}
\bibitem{Alex_1} G. Alexopoulos, \textit{Spectral multipliers on discrete groups}, Bull.\ Lond.\ Math.\ Soc.\ (4) 33 (2001), 417--424.
\bibitem{AmAnSif1} B. Amri, J. P. Anker, and M. Sifi, \textit{Three results in Dunkl analysis}, Colloquium Math. (1) 118 (2010), pp. 299-312.
\bibitem{AmGasSif1} B. Amri, A. Gasmi, M. Sifi, Linear and Bilinear Multiplier Operators for the Dunkl Transform, \emph{Mediterr. J. Math.} (4) \textbf{7} (2010), 503--521.
%\bibitem{BenTor1} A. Benyi and R.H. Torres, \textit{Almost orthogonality and a class of bounded bilinear pseudodifferential
%operators}. Math. Res. Lett. 11 (2004), pp. 1--11.
%\bibitem{BerGer1} F. Bernicot, P. Germain, \textit{Boundedness of bilinear multipliers whose symbols have a narrow support}, J. Anal. Math. (1) 119 (2013), pp. 165-212.
\bibitem{BerMalMoeNai1} F. Bernicot, D. Maldonado, K. Moen, and V. Naibo, \textit{Bilinear Sobolev-Poincar\'e inequalities and
Leibniz-type rules}, J. Geom. Anal. (2) 24 (2014), pp. 1144-1180.
\bibitem{Bla1} O. Blasco, \textit{Bilinear multipliers and transference}, Int. J. Math. Math. Sci. (4) 2005, pp. 545-554.
%\bibitem{BlaCarGil1} O. Blasco, M. Carro, and T. A. Gillespie, \textit{Bilinear Hilbert Transform on Measure Spaces}, J. Fourier Anal. Appl. (4) 11 (2015), pp. 459-470
\bibitem{BouLi1} J. Bourgain, D. Li, \textit{On an endpoint Kato-Ponce inequality}, Diff. Int. Eq. (11/12) 27 (2014), pp. 1037-1072.
\bibitem{CM1} R. Coifman, Y. Meyer, \textit{Nonlinear harmonic analysis,  operator theory, and PDE} in Beijing Lectures in Harmonic Analysis (Beijing, 1984), Annals of Mathematics Studies 112, Princeton Univ. Press, Princeton, NJ, 1986, pp. 3--45.
\bibitem{DaiWan1} F. Dai, H. Wang, \textit{A transference theorem for the Dunkl transform and its applications}, J. Funct. Anal. (12) 258 (2010), pp. 4052-4074.
%\bibitem{DemPraThi1} C. Demeter, M. Pramanik and C. Thiele, \textit{Multilinear singular operators with fractional rank}, Pacific
%J. Math. 246 (2010), 293-324.
%\bibitem{DemThi1} C. Demeter, C. Thiele, \textit{On the two dimensional bilinear Hilbert transform}, Amer. J. Math. (1) 132
%(2010), pp. 201-256.
%\bibitem{DieGra1} G. Diestel, L. Grafakos, \textit{Unboundedness of the ball bilinear multiplier multiplier operator}, Nagoya
%Math. J. 185 (2007), pp. 151--159.
\bibitem{FanSat1} D. Fan, S. Sato, \textit{Transference on certain multilinear multiplier operators}, J. Austral. Math. Soc.,  70 (2001), pp.37-55.
\bibitem{Fre1} D. Frey, \textit{Paraproducts via $H^\infty$-functional calculus}, Rev. Math. Iberoam. (2) 29 (2013), 635-663. 
%\bibitem{Hanonsemi} M.\ G.\ Cowling, \textit{Harmonic analysis on semigroups}, Ann.\ of Math.\ 117 (1983), 267--283.
%\bibitem{GraKal1}  L. Grafakos, N. Kalton, \textit{The Marcinkiewicz multiplier condition for bilinear operators}, Studia Math. 146 (2001), pp.\ 115--156.
%\bibitem{GraLi1} L. Grafakos and X. Li, \textit{Uniform bounds for the bilinear Hilbert transform I.}, Ann. of Math. (3) 159 (2004), pp. 889--933.
%\bibitem{GraLi2} L. Grafakos and X. Li, \textit{The disc as a bilinear multiplier} J. Amer. Math. (1) 128 (2006), pp. 91-119.
%\bibitem{GraMalNai1} L. Grafakos, D. Maldonado, and V. Naibo, \textit{A remark on an endpoint Kato-Ponce inequality}, Diff. Int. Eq. (5/6) 27 (2014), pp. 415-424.
\bibitem{GraOh1} L. Grafakos, S. Oh, \textit{The Kato-Ponce inequality}, Commun. Part. Diff. Eq.  (6) 39 (2014), pp. 1128-1157.
%\bibitem{GraSi1} L. Grafakos, Z. Si, \textit{The H\"ormander multiplier theorem for multilinear operators}, J. Reine Angew. Math., 668 (2012), pp. 133-147.
\bibitem{GraTor1} L. Grafakos, R. Torres, \textit{Multilinear Calder\'on-Zygmund theory}, Adv. Math. 165 (2002), pp. 124-164.
\bibitem{KatPon1} T. Kato,  G. Ponce, \textit{Commutator estimates and the Euler and Navier-Stokes equations},
Commun. Pure Appl. Math., 41 (1988), pp. 891-907.
\bibitem{KenPonVeg1}  C. Kenig, G. Ponce, and L. Vega, \textit{Well-posedness and scattering results for the generalized
	Korteweg-de Vries equation via the contraction principle}, Comm. Pure Appl.
	Math., 46 (1993), pp. 527-620.
\bibitem{KenSte1} C. Kenig, E. M. Stein, \textit{Multilinear estimates and fractional integrals}, Math. Res. Lett. 6 (1999) pp. 1-15.
\bibitem{GuiKon1} A. Gulisashvili, M. A. Kon, \textit{Exact Smoothing Properties of Schrödinger Semigroups}, Amer. J. Math. (6) 118 (1996), pp. 1215-1248.
%\bibitem{LaTh1} M.\ Lacey, \textit{The bilinear maximal function maps into $L^p$ for $2/3<p\leq1$}, Ann.\ Math.\ 151 (2000), pp.\ 35-57.
%\bibitem{LaTh2} M.\ Lacey, C.\ Thiele, \textit{$L^p$ estimates on the bilinear Hilbert transform for $2<p<\infty$}, Ann.\
%Math.\ 146 (1997), pp.\ 693--724.
\bibitem{MusPipTaoThi1} C. Muscalu, J. Pipher, T. Tao, and C. Thiele, \textit{Bi-parameter paraproducts}, Acta Math. 193 (2004),
pp. 269-296.
\bibitem{Musc_Schlag} C.\ Muscalu, W.\ Schlag, \textit{Classical and multilinear harmonic analysis}, Vol II, Cambridge Studies in       advanced mathematics 138, 2013.
%\bibitem{MusTaoThiel1} C. Muscalu, T. Tao, C. Thiele, \textit{Multi-linear operators given by singular multipliers}, J. Amer. Math. Soc. 15 (2002)
%pp. 469--496.
\bibitem{Ros} M. R\"{o}sler, \textit{Dunkl operators: theory and applications, in Orthogonal Polynomials and Special Functions}
(Leuven, 2002), Lecture Notes in Math., Vol. 1817, Springer, Berlin, 2003, 93–135.
\bibitem{Sato1} E. Sato, \textit{Transference of bilinear operators between Jacobi series and Hankel transforms}, Taiwanese J. Math.
(4) 15 (2011), pp. 1561-1573.
\bibitem{schmu:dgen} K. Schm\"udgen, \textit{Unbounded Self-adjoint Operators on Hilbert Space}, Grad. Texts in Math. 265 (2012).
\bibitem{St1} E. Stein, \textit{Interpolation of Linear Operators}, Trans. Amer. Math. Soc. (2) 83 (1956), pp. 482-492.
%\bibitem{ThaXu1} S. Thangavelu, Y. Xu, \textit{Convolution operator and maximal function for the Dunkl transform}, J. Anal. Math. (1) 97 (2005), pp. 25-55.
%\bibitem{Tom1} N. Tomita, \textit{A H\"ormander type multiplier theorem for multilinear operators}, J. Funct. Anal., (8) 259 (2010), pp. 2028--2044.
\bibitem{WrocMHc} B. Wr\'obel, \textit{ On the consequences of a Mihlin-H\"ormander functional calculus: maximal and square
    function estimates}, preprint 2015, arXiv:1507.08114.
\bibitem{WroJSMMSO} B.\ Wróbel,  \textit{ Joint spectral multipliers for mixed systems of operators}, J. Fourier Anal. Appl. (2016) online first, DOI:10.1007/s00041-016-9469-7, pp. 1-43.
\bibitem{PhD} B.\ Wr\'obel, \textit{Multivariate spectral multipliers}, PhD thesis, Scuola Normale Superiore, Pisa and Uniwersytet Wroc\l awski (2014), http://arxiv.org/abs/1407.2393.
\bibitem{WroMSM} B. Wr\'obel, \textit{Multivariate spectral multipliers for tensor product orthogonal expansions},  Monatsh. Math. 168 (2012), 124--149.
\end{thebibliography}
\end{document}